\documentclass{article}
\usepackage{amsfonts,amssymb,amsmath, mathtools,amsthm}
\usepackage{xcolor, color}
\usepackage{graphicx}
\usepackage{multicol}
\usepackage{authblk}
\usepackage{url}
\usepackage{faktor}
\usepackage{ulem}
\usepackage{tikz}
\usetikzlibrary{calc,shapes.geometric,positioning}
\usepackage{pdfpages}
\usepackage{soul}
\usepackage{epsfig}
\usepackage{epic}
\usepackage{eepic}
\usepackage{url}
\usepackage{longtable}
\usepackage{mathrsfs}
\usepackage{multirow}
\usepackage{bigstrut}
\usepackage{setspace}
\usepackage{array}
\usepackage{xspace}
\usepackage{siunitx}
\usepackage{booktabs} 
\usepackage{hyperref} 
\usepackage[noabbrev]{cleveref}

\tikzset{
blacknode/.style={circle,fill=black,inner sep=2.5pt},
whitenode/.style={circle,draw=black,fill=white,inner sep=2.5pt},
edge/.style={line width=0.9pt},
strand/.style={line width=1.1pt}
dimedge/.style={line width=1.7pt}
}

\def\multiset#1#2{\ensuremath{\left(\kern-.3em\left(\genfrac{}{}{0pt}{}{#1}{#2}\right)\kern-.3em\right)}}

\newtheorem{theorem}{Theorem}[section]
     \newtheorem{lemma}[theorem]{Lemma}
     \newtheorem{proposition}[theorem]{Proposition}

     \newtheorem{definition}[theorem]{Definition}
     \newtheorem{example}[theorem]{Example}

     \newtheorem{remark}[theorem]{Remark}

\newcommand{\Sumblank}[2]{\displaystyle{\sum_{#1}^{#2}}}
\newcommand{\SumBlank}[1]{\displaystyle{\sum_{#1}}}

\newcommand{\ProdBlank}[1]{\displaystyle{\prod_{#1}}}

\newcommand{\bi}{\begin{itemize}}
\newcommand{\ei}{\end{itemize}}
\newcommand{\bc}{\begin{center}}
\newcommand{\ec}{\end{center}}
\newcommand{\be}{\begin{enumerate}}
\newcommand{\ee}{\end{enumerate}}

\newcommand{\C}{\mathbb{C}}

\newcommand{\ZZ}{\mathbb{Z}}

\newcommand{\RR}{\mathbb{R}}
\newcommand{\CC}{\mathbb{C}}

\newcommand{\grass}{\mathrm{Gr}(k,n)}
\newcommand{\tnngrass}{\mathrm{Gr}(k,n)_{\ge 0}}
\newcommand{\affgrass}{\widehat{\mathrm{Gr}}(k,n)}

\newcommand{\coring}{\C[\widehat{\mathrm{Gr}}(k,n)]}

\newcommand{\poring}{\CC[\widehat{\Pi_{\mathcal{P}}^\circ}]}

\newcommand{\tmax}[1]{\mathrm{tmax}\left\{ #1 \right\}}
\newcommand{\TMAX}[1]{\mathrm{Tmax}\left\{ #1 \right\}}
\newcommand{\Tmax}{\mathrm{tmax}}
\newcommand{\TTMAX}{\mathrm{Tmax}}
\newcommand{\tmin}[1]{\mathrm{tmin}\left\{ #1 \right\}}
\newcommand{\TMIN}[1]{\mathrm{Tmin}\left\{ #1 \right\}}
\newcommand{\Tmin}{\mathrm{tmin}}
\newcommand{\TTMIN}{\mathrm{Tmin}}

\newcommand{\cev}[1]{\reflectbox{\ensuremath{\vec{\reflectbox{\ensuremath{#1}}}}}}

\newcommand{\gneck}{\vec{\mathcal{I}}}
\newcommand{\rgneck}{\cev{\mathcal{I}}}
\newcommand{\gneckset}[1]{\vec{I}_{#1}}
\newcommand{\rgneckset}[1]{\cev{I}_{#1}}

\DeclareMathOperator{\wt}{\mathrm{wt}}

\title{Extremal Matchings and Height Functions}

\author{Nickolas Anderson\thanks{School of Mathematics, University of Minnesota~ \texttt{and04339@umn.edu}} ~and Gregg Musiker\thanks{School of Mathematics, University of Minnesota~ \texttt{musiker@umn.edu}}}
\date{June 9, 2026}

\begin{document}

\maketitle

\abstract{
This paper studies a lattice structure for almost perfect matchings on certain planar, bipartite (plabic) graphs embedded in a disk. Postnikov's boundary measurement map, and subsequent related work, yielded that plabic graphs parameterize \textit{positroid cells} within the totally nonnegative Grassmannian with the map itself given in terms of almost perfect matchings with fixed boundary condition. For finite planar bipartite graphs, Propp introduced a distributive lattice structure on their set of perfect matchings. Subsequently Muller--Speyer, provided this distributive lattice structure on the aforementioned almost perfect matchings with fixed boundary condition.  Their work also identified the \textit{extremal matchings} of this lattice for boundary conditions that coincide with face labels of the plabic graph given by the positroid structure. We extend this by giving an explicit construction of extremal matchings in terms of \textit{height functions} and show that all possible boundary conditions of an almost perfect matching can be obtained within this construction.}

\tableofcontents

\section{Introduction}

As constructed by Postnikov \cite{Post}, the \textit{totally nonnegative Grassmannian} admits a stratification by \textit{positroid cells} $\{\Pi_\mathcal{P}\}$. In the same paper, Postnikov introduced certain classes of \textit{(reduced) planar bicolored graphs} (\textit{plabic graphs}) $G$ which parameterize these positroid cells via the \textit{boundary measurement map}. Informally, the boundary measurement map takes the input of a positive edge weighting on $G$ and returns a collection of homogeneous \textit{Pl\"ucker coordinates} through considering the sets of \textit{(almost perfect) matchings} on $G$. 

Postnikov \cite{Post} defined \textit{positroids} $\mathcal{P}$ as matroids with totally nonnegative maximal representations. One observation of Postnikov was that the set of \textit{loop erased walks} on a planar network $N(G)$, a prototype for almost perfect matchings on $G$, formed a positroid $\mathcal{P}$ which coincides with the positroid indexing the image of $N(G)$ under the boundary measurement map. Follow up work of Talaska \cite{Talaska} extended this to allow loops in such planar network utilizing \textit{conservative flows} and \textit{perfect orientations} on $N(G)$. Postnikov, Speyer, and Williams \cite{PSW} noted that perfect orientations on $G$ (with source set $I$) were in bijection with matchings on $G$ with \textit{boundary condition} $I$. 
An important consequence of characterizing positroids in this way is that $\mathcal{P}$ is the collection of all achievable boundary conditions for matchings on the associated plabic graph $G$.

Due to Thurston \cite{Thurston}, one means of studying matchings on a finite, bipartite planar graph $H$ is through the lens of \textit{height functions}. Although the context of Thurston's height functions arose in the context of lozenge tilings, Propp \cite{Propp}, whose pre-print first appeared in 1993, linked height functions to \textit{(perfect)} matchings on $H$. In particular, Propp's formulation of height functions assigned real values to the faces of $H$ such that differences in heights between adjacent faces represented relative probabilistic measures, and under a certain cyclic orientation about vertices of $H$, a positive difference in heights corresponded to a matched edge. For further details, see \Cref{sec:heights}. 

In the same work of Propp, he used height functions to establish a finite distributive lattice structure on the set of perfect matchings on finte, planar bipartite graphs $H$ with the partial given in terms of \textit{swivels} \cite{Propp}. More recent work of Muller and Speyer showed that the sets of almost perfect matchings on plabic graphs $G$ (in the sense of Postinikov) with fixed boundary conditions also form finite distributive lattices under the same partial order by swivels \cite[Appendix B]{MSTwist}. In Section 5 of their paper, Muller and Speyer also gave a construction for certain \textit{extremal matchings} in these lattices, however, their construction fails to yield extremal matchings for those with boundary conditions not appearing as face labels of $G$. In particular, a result of Oh, Postnikov, and Speyer \cite{OPS} yields that the number of faces of $G$ is at most $k(n-k)+1$ but the number of boundary conditions for matchings on $G$ is larger than this in general, approaching $\binom{n}{k}$ in the case of the top-dimensional positroid cell.

The current work addresses this gap by providing extremal matchings for all boundary conditions, regardless of whether or not it appears as a face label of $G$. We provide this by using a novel construction for extremal matchings on $G$ in terms of height functions. In particular, we define a special family of height functions called \textit{fundamental height functions} which result from summing over the face label entries of $G$ under a shifted cyclic order on $[n]$. Using the fundamental height functions as building blocks, we obtain new height functions called \textit{translated maximums}, $\TTMAX$, and \textit{translated minimums}, $\TTMIN$ (see \Cref{def:Tmax_Tmin}). As main results, we show that $\TTMAX$'s and $\TTMIN$'s produce the \textit{minimal matchings} and \textit{maximal matchings}, respectively (see \Cref{thm:tmax_tmin_are_extremal}), with their boundary conditions read in terms of \textit{Grassmann}- and \textit{reverse Grassmann necklaces} (see Theorems \ref{thm:main} and \ref{thm:main_restate}). As a consequence, we obtain that all boundary conditions of extremal matchings on $G$ can be obtained in this way (see \Cref{lem:all_matchings}). 

This paper is organized as follows. In \Cref{sec:positroid_combo}, we give background on the positroid stratification of the totally nonnegative Grassmannian and the combinatorial objects indexing these positroids, namely, Grassmann necklaces, reverse Grassmann necklaces, and \textit{decorated permutations}. In \Cref{sec:plabic_graphs}, we discuss the anatomy of (reduced) plabic graphs, face labeling conventions for $G$, and how the \textit{almost monotonically decreasing property} arises in face labels of $G$ (see \Cref{lem:common_face_indexing}). In \Cref{sec:matchings}, we discuss Postnikov's boundary measurement map and the lattice structure of matchings of Propp and Muller--Speyer. Additionally, we give the Muller--Speyer construction for extremal matchings on $G$. In \Cref{ssec:heights}, we define our family of fundamental height functions and develop our theory of height functions on $G$ by deducing many results about $\TTMAX$'s and $\TTMIN$'s. Of particular note, we show that $\TTMAX$ constructs minimal matchings and $\TTMAX$ constructs maximal matchings in \Cref{thm:tmax_tmin_are_extremal}. In \Cref{ssec:extreme_match}, we state and prove our main result, namely, that the extremal matchings constructed from $\TTMAX$ and $\TTMIN$ have boundary conditions read from the data of Grassmann- and reverse Grassmann necklaces (see Theorems \ref{thm:main} and \ref{thm:main_restate}). We close this paper with \Cref{appsec:Applications} which details the motivation from cluster algebras to study almost perfect matchings.

\textbf{Acknowledgements:} A portion of this work appeared in the first author's M.S. Thesis \cite{NAThesis}. The authors would like to thank B. Brubaker, P. Pylyavskyy, and D. Speyer for their meaningful discussions and questions around this work.

\section{The Combinatorics of Positroid Cells}
\label{sec:positroid_combo}

We study the positroid stratification of the totally nonnegative Grassmannian as in \cite{Post,SOh,KLS}. We first define the \textbf{Grassmannian}, $\grass$, to be the space of $k$-planes in $n$-dimensional space. Without loss of generality, we will let our ambient $n$-dimensional space be $\RR^n$. 

We encode a $k$-subspace of $\RR^n$ as a rank $k$ matrix $A$ where $\mathrm{rowspan}(A)$ properly defines this subspace. Via the \textbf{Pl\"ucker embedding} of $\grass$ into $\mathbb{P}(\wedge^k\RR^n)$, the projectivization of the $k$th exterior power of $\RR^n$, a point $A \in \grass$ is a full-rank $k \times n$ matrix with entries in $\RR$. Thus, $\grass$ is a projective variety and taking the affine cone over $\grass$, denoted $\widehat{\mathrm{Gr}}(k,n)$, gives a resulting affine variety. The homogeneous equations defining the projective variety structure of $\grass$ are in terms of $k \times k$ minors $\Delta_I = \begin{vmatrix}
    v_{i_1} & v_{i_2} & \cdots & v_{i_k}
\end{vmatrix}$, referred to as \textbf{Pl\"ucker coordinates}, where $I = \{i_1,i_2,\ldots, i_k\}$ indexes column vectors $v_{i_j}$ of a point $A \in \grass$. Thus Pl\"ucker coordinates generate the homogeneous coordinate ring of the affine cone over the Grassmannian, denoted by $\coring$. Of particular note, the Pl\"ucker embedding gives the following quadratic \textit{Pl\"ucker relation} \[\Delta_{J,a,c}\Delta_{J,b,d} = \Delta_{J,a,b}\Delta_{J,c,d} + \Delta_{J,a,d}\Delta_{J,b,c}\] where $J \in \dbinom{[n]}{k-2}$ and $a,b,c,d \in [n]$ are distinct and not contained in $J$. 

Scott showed that $\coring$ has a cluster algebra structure in \cite{JS} where the cluster variables are Pl\"ucker coordinates and the exchange relations induced by mutation are subject to Pl\"ucker relations. In particular, the quadratic Pl\"ucker relations are subtraction-free expressions. 

The \textbf{totally nonnegative Grassmannian}, $\tnngrass$, is the subspace of $\grass$ characterized as $\tnngrass = \left\{A \in \grass \bigg| \Delta_I(A) \ge 0 \text{ for all } I \in \dbinom{[n]}{k}\right\}$. For any $A \in \grass$, we can define its \textit{column matroid} $\mathcal{C}(A):= \left\{ J \in \dbinom{[n]}{k} \, \bigg| \, \Delta_J(A) \ne 0\right\}$ where $J$ indexes column vectors of $A$ assembling a basis for $\mathbb{R}^k$. When $X \in \tnngrass$, the corresponding column matroid $\mathcal{C}(X)$ belongs to the following special class of matroids.

\begin{definition}[\cite{Post}]
\label{def:positroid}
A \textbf{positroid}, $\mathcal{P}$, is a representable matroid whose (representation's) maximal minors are totally nonnegative.
\end{definition}

One may quickly observe that many distinct $X,Y \in \tnngrass$ give rise to the same positroids, i.e. $\mathcal{P}(X) = \mathcal{P}(Y)$, meaning that $X$ and $Y$ share the same set of vanishing Pl\"ucker coordinates. This gives rise to our main object of study. 

\begin{definition}[{\cite[Definition 3.2]{Post}}]
Fixing $X \in \tnngrass$, the \textbf{positroid cell}, $\Pi(X)$, is the collection \[\Pi(X) = \{ Y \in \tnngrass \, | \, \mathcal{P}(Y) = \mathcal{P}(X)\}.\]
\end{definition}

In particular, any positroid cell $\Pi(X)$ is determined by the collection nonvanishing minors of $X$. For this reason, we often omit $X$ in this notation, writing $\Pi_\mathcal{P}$ to represent a generic positroid cell which is indexed by its column positroid $\mathcal{P}$ of nonvanishing Pl\"ucker coordinates. Postnikov showed that $\tnngrass$ is stratified by positroid cells $\Pi_\mathcal{P}$ \cite{Post}. Similar to the case of $\coring$, one can define the \textit{open positroid variety}, $\Pi_{\mathcal{P}}^\circ$, and the homogeneous coordinate ring (of its affine cone), $\poring$, which is generated by nonvanishing Pl\"ucker coordinates $\Delta_I$. Then also, $\poring$ has a cluster algebra structure due to \cite{GL} and the cluster structure of $\coring$ arises as a special case when $\mathcal{P} = \dbinom{[n]}{k}$. We begin our study of $\Pi_\mathcal{P}$ via two combinatorial objects, namely, Grassmann necklaces and decorated permutations of $[n]$.

\begin{definition}[{\cite[Definition 16.1]{Post}}]
\label{def:grassman_necklace}
A \textbf{Grassmann Necklace} is a sequence $\gneck = (\gneckset{1}, \ldots,\gneckset{n})$ of sets $\gneckset{i} \in \dbinom{[n]}{k}$ satisfying the recurrence \[\gneckset{i+1} = \begin{cases}
 (\gneckset{i}\setminus \{i\}) \cup \{i_j\} & \text{if  }i \in \gneckset{i} \\
  \gneckset{i} & \text{if  } i \not\in \gneckset{i}
\end{cases}\] for all $1 \le i \le n$ with some $i_j \in [n]$.

Similarly, a \textbf{reverse Grassmann Necklace} is a sequence $\rgneck = (\rgneckset{1}, \ldots, \rgneckset{n})$ of sets $\rgneckset{i} \in \dbinom{[n]}{k}$ satisfying the recurrence
\[\rgneckset{i-1} =  \begin{cases}
(\rgneckset{i}\setminus\{i\}) \cup \{i_j\} & \text{if  } i \in \rgneckset{i} \\
\rgneckset{i} & \text{if  } i \not\in \rgneckset{i}
\end{cases}\] for all $1 \le i \le n$ with some $i_j \in [n]$.
\end{definition}

In addition, we define a partial ordering on $k$-subsets of $[n]$ as follows. 

\begin{definition}[c.f. {\cite[Section 16]{Post}}, {\cite[Section 2]{OPS}}]
\label{def:a_order}
For $a \in [n]$, we define the \textbf{$\mathbf{a}$-ordering}, $\le_a$, modulo $n$, as the total ordering on $[n]$ by \[a <_a a+1 <_a \cdots <_a a-1.\] For $I, J \in \dbinom{[n]}{k}$, we extend $a$-ordering to a partial ordering where $I \le_a J$ means $i_\ell \le_a j_\ell$ for all $1 \le \ell \le k$. 
\end{definition}

Let $\mathcal{P}$ be any column positroid. Then $\mathcal{P}$ has a unique $a$-minimal element $I_a$ for any $a \in [n]$. Then we have the correspondence $I_a = \gneckset{a}$ for all $a \in [n]$ {\cite[Lemma 16.3]{Post}}. Moreover, for any $X \in \Pi_\mathcal{P}$, the columns of $X$ indexed by $I_a$ form a basis for $\RR^k$, and so we can equivalently refer to $\gneck$ as the sequence of $a$-minimal bases. Symmetrically, $\mathcal{P}$ has a unique $(a+1)$-maximal element $J_a$ for any $a \in [n]$. Then $J_a = \rgneckset{a}$ for all $a \in [n]$ and we refer to $\rgneck$ as the sequence of $(a+1)$-maximal bases. The following theorem gives parameterizations of $\mathcal{P}$ by $\gneck$ and $\rgneck$.

\begin{theorem}[{\cite[Theorem 6]{SOh}}]
\label{thm:gneck_positroid_biject}
For a Grassmann necklace $\gneck$, define \[\mathcal{P} = \left\{J \in \dbinom{[n]}{k} \bigg| \gneckset{a} \le_a J \text{ for all } a \in [n]\right \}.\] Then $\mathcal{P}$ is a positroid, and any positroid is obtained from a Grassmann necklace in this way. 
\end{theorem}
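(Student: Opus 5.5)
This is Oh's theorem, and I would prove it in two directions. The easy direction is "any positroid is obtained this way." Given a positroid $\mathcal{P}=\mathcal{P}(X)$ with $X\in\tnngrass$, let $\gneckset{a}$ be its $a$-minimal elements. These exist by the Gale/greedy property of matroids: for a matroid, the greedy basis with respect to the total order $\le_a$ is componentwise $\le_a$ every other basis. I would then check that $(\gneckset{1},\dots,\gneckset{n})$ satisfies the necklace recurrence of \Cref{def:grassman_necklace}.
\begin{itemize}
\item If $a\notin\gneckset{a}$, then $a$ is a loop. The greedy bases for $\le_a$ and $\le_{a+1}$ then coincide, since they differ only in where $a$ sits in the order.
\item If $a\in\gneckset{a}$, then $\gneckset{a}\setminus\{a\}$ is greedy-independent for $\le_{a+1}$. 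The $(a+1)$-greedy basis therefore contains it and adds exactly one element.
\end{itemize}
It remains to show $\mathcal{P}=\{J : \gneckset{a}\le_a J \ \forall a\}$. The inclusion $\subseteq$ is the Gale property. The reverse inclusion is the real content and uses total nonnegativity.

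For the reverse inclusion, suppose $J\notin\mathcal{P}$ but $\gneckset{a}\le_a J$ for all $a$. I would argue via Postnikov's plabic/Le-diagram parametrization, which the paper has available, or more directly as follows. Write $\mathcal{M}(\gneck)$ for the set defined in the statement, and let $\mathcal{P}'$ be the positroid of the same necklace. I would show $\mathcal{M}(\gneck)$ is itself a matroid and that its rank function is determined by the cyclic interval ranks $r([a,b])$. Those interval ranks are readable from the necklace, since $r([a,b])=|\gneckset{a}\cap[a,b]|$. The key fact about positroids is that they are exactly the matroids $\bigcap_a$ (Schubert matroid for $\le_a$ at $\gneckset{a}$), i.e. a positroid is determined by its cyclic interval ranks. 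I would prove this as follows. For a totally nonnegative $X$, $\Delta_J(X)\neq 0$ whenever every cyclic interval $[a,b]$ satisfies $|J\cap[a,b]|\le r([a,b])$. To show it, I would use a Plücker-relation/three-term positivity argument: pick $J$ minimal in violation, exchange an element to move toward some $\gneckset{a}$, and use the subtraction-free Plücker relation $\Delta_{J,a,c}\Delta_{J,b,d}=\Delta_{J,a,b}\Delta_{J,c,d}+\Delta_{J,a,d}\Delta_{J,b,c}$ with $a<b<c<d$ cyclically. If the left side's known nonzero factor forces a right-side term positive, nonvanishing propagates. The key point is that total nonnegativity lets us conclude that one right-hand term is nonzero whenever the left is, with no cancellation.

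The same computation handles the first direction, that $\mathcal{M}(\gneck)$ is a positroid for an arbitrary Grassmann necklace. I would construct a totally nonnegative representative realizing the necklace, e.g. via Postnikov's bijection between necklaces and decorated permutations and the corresponding Le-diagram/network boundary measurement. This gives $X$ whose necklace is $\gneck$, and the previous paragraph shows $\mathcal{P}(X)=\mathcal{M}(\gneck)$.

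The main obstacle is the inductive nonvanishing step: showing that every $J$ satisfying all cyclic Gale inequalities is actually a basis of a TNN matrix. I would first try induction on $\sum_a$ (Gale distance from $\gneckset{a}$) together with a carefully chosen cyclically ordered 4-tuple in the Plücker relation. I expect the delicate point to be choosing indices so that the known-nonzero product lands on the correct side.
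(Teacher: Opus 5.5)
The paper gives no proof of this statement; it quotes it from Oh. Your proposal therefore has to stand on its own, and it has a genuine gap at the one step that carries the content.

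The routine parts are correct:
\begin{itemize}
\item $a$-minimal bases exist by the greedy/Gale property.
\item They satisfy the necklace recurrence. If $a$ is a loop the two greedy runs agree. Otherwise, submodularity shows that every element the $\le_a$-greedy algorithm accepts after $a$ is also accepted by the $\le_{a+1}$-greedy algorithm, so $\gneckset{a}\setminus\{a\}\subseteq\gneckset{a+1}$.
\item The conditions $\gneckset{a}\le_a J$ for all $a$ are equivalent to $|J\cap[a,b]|\le r([a,b])$ for all cyclic intervals.
\item $\mathcal{P}(X)$ lies in the Schubert intersection, as it would for any matroid.
\item The ``every necklace gives a positroid'' direction reduces to the reverse inclusion, via Postnikov's realization of each necklace by a totally nonnegative point.
\end{itemize}

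That reverse inclusion is the entire content of Oh's theorem, and you do not prove it. The ``key fact'' you cite, that positroids are exactly intersections of cyclically shifted Schubert matroids, is the statement itself. Your Pl\"ucker mechanism, as stated, also does not single out $J$. For totally nonnegative $X$ and $a<b<c<d$,
\[\Delta_{Sac}\Delta_{Sbd}=\Delta_{Sab}\Delta_{Scd}+\Delta_{Sad}\Delta_{Sbc}\]
supports only two kinds of inference:
\begin{itemize}
\item A nonzero right-hand product forces both crossing coordinates $\Delta_{Sac},\Delta_{Sbd}$ to be nonzero.
\item A nonzero left side forces only \emph{one} of the two right-hand products to be nonzero. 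It certifies a noncrossing $J$ only if the other product is already known to vanish, for instance because one of its sets violates an interval inequality.
\end{itemize}
A single known nonzero factor on the left yields nothing.

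Your induction therefore needs an exchange lemma. Every $J$ satisfying all cyclic interval inequalities, outside some base family, must occur in a three-term relation of one of these two kinds. The other sets that relation requires must already be certified (nonzero, or violating an inequality) and strictly earlier in a well-founded order. In your approach that lemma is the whole theorem, and it is neither stated nor proved. ``Take a minimal counterexample and exchange toward some $\gneckset{a}$'' does not supply it. Nothing guarantees the exchanged sets still satisfy the inequalities for the other cyclic intervals, so the induction hypothesis need not apply to them.

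A cleaner route avoids this cancellation bookkeeping by using the subtraction-free parametrization you already invoke for the other direction. All points of a cell share the same matroid, so it suffices to test one point coming from a positively weighted network. At such a point $\Delta_J$ is a sum of positive terms over vertex-disjoint path families in a Le-network (Lindstr\"om's lemma). In this paper's language, it is a sum over almost perfect matchings with boundary $J$ (\Cref{thm:boundary_meas_map}). Hence $J\in\mathcal{P}$ if and only if such an object with boundary $J$ exists. What remains is a purely combinatorial existence statement, in which the cyclic interval inequalities must serve as the cut (Hall/Menger-type) conditions. That is essentially the shape of Oh's argument, which builds the required vertex-disjoint path families in Postnikov's Le-networks. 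You can also drop the intermediate goal of showing separately that the Schubert intersection is a matroid; that is a consequence of the theorem, not an input.
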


Analgously, we obtain a bijection of positroids with reverse Grassmann necklaces when taking $J \le_{a+1} \rgneckset{a}$ for all $a \in [n]$ in \Cref{thm:gneck_positroid_biject}. Consequently, any $\Pi_\mathcal{P}$ is indexed by $\gneck$ and $\rgneck$ in this way.

\begin{remark} \label{rem:GNwrittenorder} When writing the $k$-subsets making up a Grassmann or reverse Grassmann necklace, we either write these in the usual integer order or utilize the appropriate $a$-ordering and $(a+1)$-ordering, respectively. If we follow the latter convention, $\gneckset{i}$ will begin with an $i$ (if present) and then increase circularly. On the other hand, $\rgneckset{i}$ will begin with an $i$ (again if present) and then decrease circularly.
\end{remark}

\begin{example}
\label{ex:positroid_example_matrix}   
Consider the following matrix \[B = \begin{pmatrix}
    1 & 3 & 2 & 1 & 0 & -1 & 0 \\
    0 & 1 & 1 & 1 & 1 & 0 & 0  \\
    0 & -4 & -3 & -2 & 0 & 3 & 1
\end{pmatrix} \in \mathrm{Gr}(3,7)_{\ge 0}.\]
One can verify $\mathcal{P}(B) = \dbinom{[7]}{3} \setminus \{234,167\}$ as $\Delta_{234}(B) = \Delta_{167}(B) = 0$. Alternatively, one can obtain the Grassmann and reverse Grassmann necklaces of $\mathcal{P}(B)$ as 
$\gneck = (123,235,345,456,567,267,127)$ and $\rgneck = (157,127,123,134,345,456,567)$, where in particular $234 <_2 235$ (or $234 >_5 134$) and $167<_6 267$ (or $167 >_2 157$), which agrees with 
\Cref{thm:gneck_positroid_biject}.
\end{example}

We next define decorated permutations, which are equivalent to \textit{bounded affine permutations} \cite{MSTwist} or \textit{juggling patterns} \cite{KLS}.  In particular, one can extend a decorated permutation, essentially a map from $[n]$ to $[n]$, by $n$ periodically to yield a bijection $\ZZ \to \ZZ$. 

\begin{definition}[{\cite[Definition 13.3]{Post}}]
\label{def:dec_perm}
Let $\pi \in S_n$ and denote $\mathrm{Fix}(\pi) = \left\{i \in [n] \, | \, \pi(i) = i\right\}$. We say $\tilde{\pi} = (\pi,\mathrm{col})$ is a \textbf{decorated permutation} when pairing $\pi$ with the coloring function $\mathrm{col}: \mathrm{Fix}(\pi) \to \{-1,1\}$.
\end{definition}

For $i \in \mathrm{Fix}(\pi)$, we write $\tilde{\pi}(i) = \begin{cases}
    \underline{i} & \text{ if } \mathrm{col}(i) = -1 \\
    \overline{i} & \text{ if } \mathrm{col}(i) = 1
\end{cases}.$ This yields $\tilde{\pi}$ can be written in one-line notation. We also let $\tilde{\pi}^{-1} = (\pi^{-1},\mathrm{col})$ be the formal inverse of $\tilde{\pi}$ where we take the $S_n$-inverse of $\pi$ and the coloring function on $\mathrm{Fix}(\pi)$ is unchanged. Then each Grassmann necklace $\gneck$, resp. reverse Grassmann necklace $\rgneck$, gives rise to a decorated permutation $\tilde{\pi}\left[\gneck\right]$, resp. $\tilde{\pi}\left[\rgneck\right]$ by the following:

For $\gneck$,
\bi
\item if $\gneckset{i+1} = (\gneckset{i} \setminus \{i\}) \cup \{j\}$ and $j \ne i$, then $\tilde{\pi}\left[\gneck\right](i) = j$;
\item if $\gneckset{i+1} = \gneckset{i}$ and $i \not \in \gneckset{i}$, then $\tilde{\pi}\left[\gneck\right](i) = \underline{i}$;
\item and if $\gneckset{i+1} = \gneckset{i}$ and $i \in \gneckset{i}$, then $\tilde{\pi}\left[\gneck\right](i) = \overline{i}$.
\ei

For $\rgneck$,
\bi
\item if $\rgneckset{i-1} = (\rgneckset{i} \setminus \{i\}) \cup \{j\}$ and $j \ne i$, then $\tilde{\pi}\left[\rgneck\right](i) = j$;
\item if $\rgneckset{i-1} = \rgneckset{i}$ and $i \not \in \rgneckset{i}$, then $\tilde{\pi}\left[\rgneck\right](i) = \underline{i}$;
\item and if $\rgneckset{i-1} = \rgneckset{i}$ and $i \in \rgneckset{i}$, then $\tilde{\pi}\left[\rgneck\right](i) = \overline{i}$.
\ei

When $\gneck$ and $\rgneck$ parameterize the same positroid cell $\Pi_\mathcal{P}$, then $\tilde{\pi}\left[\gneck\right]^{-1} = \tilde{\pi}\left[\rgneck\right]$ and so when context of $\Pi_\mathcal{P}$ is clear, we write $\tilde{\pi}\left[\gneck\right]:= \tilde{\pi}$ and $\tilde{\pi}\left[\rgneck\right]:= \tilde{\pi}^{-1}$.

We say $i \in [n]$ is an \textit{anti-exceedance} of $\tilde{\pi}$ if either $i<\tilde{\pi}^{-1}(i)$ or $\tilde{\pi}(i) = \overline{i}$. Similarly, we say $i \in [n]$ is an \textit{exceedance} of $\tilde{\pi}$ if either $i > \tilde{\pi}^{-1}$ or $\tilde{\pi}(i) = \overline{i}$. We consider the anti-exceedance sets of $\tilde{\pi}$ with respect to $a$-ordering to obtain $\gneck$ from $\tilde{\pi}$ and the exceedence sets of $\tilde{\pi}^{-1}$ with respect to $(a+1)$-ordering to obtain $\rgneck$ from $\tilde{\pi}^{-1}$. Explicitly, \[\gneckset{a} = \left\{ i \in [n] \, |\, i <_a \tilde{\pi}^{-1}(i) \text{ or } \tilde{\pi}(i) = \overline{i} \right\}\] and \[\rgneckset{a} = \left\{ i \in [n] \, |\, i >_{a+1} \tilde{\pi}(i) \text{ or } \tilde{\pi}(i) = \overline{i} \right\}.\]

\begin{lemma}[{\cite[Lemma 16.2]{Post}}]
\label{lem:necklace_dec_perm_bijection}
Let $\tilde{\pi}\left[\gneck\right] = \tilde{\pi}$ and $\tilde{\pi}\left[\rgneck\right]= \tilde{\pi}^{-1}$. The maps $\gneck \to \tilde{\pi}$ and $\tilde{\pi} \to \gneck$, resp. the maps $\rgneck \to \tilde{\pi}^{-1}$ and $\tilde{\pi}^{-1} \to \rgneck$, as constructed above, are pairs of inverse maps and provide a bijection of decorated permutations on $n$ letters with Grassmann necklaces and reverse Grassmann necklaces, respectively. 
\end{lemma}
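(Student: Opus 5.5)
We prove the forward-necklace statement; the reverse one follows from the same argument with $i-1$ in place of $i+1$, $(a+1)$-orderings in place of $a$-orderings, and $\tilde{\pi}^{-1}$ in place of $\tilde{\pi}$. There are two things to check. First, the composite $\gneck \mapsto \tilde{\pi}[\gneck] \mapsto \gneck'$ returns $\gneck$, where $\gneck'$ is built by the anti-exceedance formula. Second, for an arbitrary decorated permutation $\tilde{\pi}$, the sequence $\gneck'$ built from it is a Grassmann necklace and satisfies $\tilde{\pi}[\gneck'] = \tilde{\pi}$. Together these give that the two maps are mutually inverse bijections.

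\textbf{Step 1: $\gneck'$ is a necklace with $\tilde{\pi}[\gneck']=\tilde{\pi}$.} Set $\gneckset{a}' = \{ i : i <_a \tilde{\pi}^{-1}(i) \text{ or } \tilde{\pi}(i)=\overline{i}\}$.
\begin{enumerate}
\item \emph{Passing from $a$ to $a+1$.} Moving the basepoint from $a$ to $a+1$ only moves $a$ from being the smallest element to being the largest. For $i\ne a$ with $\tilde{\pi}^{-1}(i)\ne a$, the relative order of $i$ and $\tilde{\pi}^{-1}(i)$ is therefore unchanged, so membership of $i$ is unchanged.
\item \emph{Only two elements can change.} These are $i=a$ and $i=\pi(a)$.
\begin{enumerate}
\item If $\pi(a)=j\ne a$: we have $a\in\gneckset{a}'$, since $a$ is $a$-minimal and $\pi^{-1}(a)\neq a$. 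Also $a\notin\gneckset{a+1}'$, since $a$ is now maximal. For $j$, the condition $j<_a a$ fails while $j<_{a+1} a$ holds. Hence $\gneckset{a+1}'=(\gneckset{a}'\setminus\{a\})\cup\{j\}$.
\item If $a$ is a fixed point: membership is governed by the colour of $a$ and does not depend on the basepoint. So $\gneckset{a+1}'=\gneckset{a}'$, with $a\in\gneckset{a}'$ exactly when $\mathrm{col}(a)=1$.
\end{enumerate}
\item \emph{Reading off the conclusions.} This is exactly the recurrence of \Cref{def:grassman_necklace}, so $\gneck'$ is a Grassmann necklace. Comparing with the three bullet rules defining $\tilde{\pi}[\,\cdot\,]$ gives $\tilde{\pi}[\gneck']=\tilde{\pi}$.
\item \emph{Sizes.} All $\gneckset{a}'$ have the same size $k$, because each step removes one element and adds one. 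This $k$ is the number of anti-exceedances.
\end{enumerate}

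\textbf{Step 2: $\gneck'=\gneck$.} Let $\gneck$ be any necklace and put $\tilde{\pi}=\tilde{\pi}[\gneck]$.
\begin{enumerate}
\item \emph{$\tilde{\pi}$ is a decorated permutation.} The map is well defined because each step of the recurrence either removes $a$ and inserts one element $j$, or leaves the set fixed. To see it is a bijection, track an element $j$ around the cycle $a=1,\dots,n$.
\begin{enumerate}
\item Element $j$ can leave the set only at step $j$.
\item If $j$ is a fixed point, it is never inserted at any step $i\ne j$. If it were, the cyclic recurrence would force it to be removed again, and removal happens only at step $j$, a contradiction.
\item Otherwise, $j$ is inserted exactly once per cycle, namely after it leaves at step $j$ (or exactly once if it is always absent in between).
\end{enumerate}
This shows each $j$ has exactly one preimage, so $\pi$ is a bijection.
\item \emph{Agreement of the sets.} Now $\gneck$ and $\gneck'$ satisfy the same recurrence with the same insertions. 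So it suffices to show they agree at a single index, say $\gneckset{1}=\gneckset{1}'$. Fix $i$ that is not a fixed point, with $m=\pi^{-1}(i)$.
\begin{enumerate}
\item By the tracking above, in the true necklace $i$ is inserted at step $m$ and removed at step $i$.
\item Hence $i\in\gneckset{a}$ exactly for $a$ in the cyclic interval $(m,i]$, which means $a\le_a i<_a m$.
\item This is precisely the anti-exceedance condition $i<_a\pi^{-1}(i)$.
\end{enumerate}
For fixed points, membership is constant and recorded by the colour. So $\gneckset{a}=\gneckset{a}'$ for all $a$.
\end{enumerate}

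\textbf{Main obstacle.} The main obstacle is the bookkeeping in Step 2: establishing bijectivity of $\pi$ and identifying the membership interval $(\pi^{-1}(i),i]$. Within that, the delicate case is the boundary situation $j=i$, a fixed point. There we must use that the recurrence forbids reinserting $i$ at step $i$ in the non-fixed case, since $j\ne i$ there by definition.
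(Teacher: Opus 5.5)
Your proof is correct. Step~1 shows that changing the basepoint from $a$ to $a+1$ affects only the status of $a$ and $\pi(a)$. Step~2 shows that a non-fixed $i$ lies in $\gneckset{a}$ exactly for $a$ in the cyclic interval $(\pi^{-1}(i),i]$, i.e.\ exactly when $i <_a \pi^{-1}(i)$. Together, and with your element-tracking argument that $\tilde{\pi}[\gneck]$ is a genuine decorated permutation, these give well-defined, mutually inverse maps.

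The paper gives no proof of its own: it cites Postnikov's Lemma~16.2 and says the reverse-necklace case ``follows similarly,'' which is exactly your reduction. Your argument is therefore a self-contained version of the standard verification. Two small points. When you mirror to the reverse case, the inequality also flips, which produces the paper's exceedance condition $i >_{a+1} \tilde{\pi}(i)$. Also, your remark in Step~2 that it ``suffices to check a single index'' is harmless but redundant, since you then check every index directly.
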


We remark that \cite{Post} only gave this construction and bijection for $\gneck$, but one can easily verify that the dual construction and bijection for $\rgneck$ follows similarly. Thus, we have that decorated permutations are in bijection with positroids via \Cref{thm:gneck_positroid_biject} and \Cref{lem:necklace_dec_perm_bijection}, and so any decorated permutation $\tilde{\pi}$ uniquely indexes positroid cell $\Pi_\mathcal{P}$.

We conclude this section in defining a statistic on $\tilde{\pi}$ called the \textbf{alignment number} of $\tilde{\pi}$ and denoted $\ell(\tilde{\pi})$ {\cite[Section 17]{Post}}. The choice of denoting the alignment number of $\tilde{\pi}$ by $\ell(\tilde{\pi})$ which is reminiscent of the \textit{length} function on $S_n$ is due to the role the alignment number plays as the length function in the \textit{circular Bruhat order} (see {\cite[Definition 17.5]{Post}}). Moreover, for $\tilde{\pi}$ corresponding to $\mathcal{P}$, then $\ell(\tilde{\pi})$ is the codimension of $\Pi_\mathcal{P}$ inside the top-dimensional cell of $\tnngrass$ {\cite[Proposition 17.10]{Post}}.

\begin{definition}[{\cite[Section 17]{Post}},{\cite[Definition 4.7]{OPS}}]
    \label{def:alignments}
    For $i,j \in [n]$, the pair $(i,j)$ forms an \textbf{alignment} in $\tilde{\pi}$ if $i <_i j <_i \tilde{\pi}(j) <_i \tilde{\pi}(i)$. Then the \textbf{alignment number} of $\tilde{\pi}$, denoted $\ell(\tilde{\pi})$, is the number of alignments in $\tilde{\pi}$. 
\end{definition}

\begin{example}
\label{ex:positroid_example_matrix_a}
The decorated permutation $\tilde{\pi}$ and its inverse $\tilde{\pi}^{-1}$ corresponding to the positroid $\mathcal{P}(B)$ in Example \ref{ex:positroid_example_matrix} are $\tilde{\pi} = 5467213$ and $\tilde{\pi}^{-1} = 6572134$. Then $\ell(\tilde{\pi}) = 2$ as $(1,2)$ and $(5,6)$ are the only alignments in $\tilde{\pi}$ since $1 <_1 2 <_1 \tilde{\pi}(2) = 4 <_1 \tilde{\pi}(1) = 5$ and $5<_5 6 <_5 \tilde{\pi}(6) = 1 <_5 \tilde{\pi}(5) = 2$. Similarly, $\ell(\tilde{\pi}^{-1}) = 2$ with $(1,2)$ and $(4,5)$ being the only alignments in $\tilde{\pi}^{-1}$.
\end{example}

\section{Plabic Graphs}
\label{sec:plabic_graphs}

In this section, we give another object which parametrizes the positroid cell $\Pi_\mathcal{P}$ (here $\mathcal{P}$ is an arbitrary positroid), namely, planar bicolored (plabic) graphs $G$ introduced in \cite{Post}. This parameterization is given explicitly through the famous \textit{boundary measurement map} (see \Cref{ssec:boundary_meas_map}). The combinatorial data presented in \Cref{sec:positroid_combo} which index $\Pi_{\mathcal{P}}$, namely Grassmann and reverse Grassmann necklaces and decorated permutations, also surface as combinatorial data of plabic graphs.

\subsection{Basic definitions and well-definedness}
\label{ssec:plabic_basic}

We begin by defining plabic graphs in the sense of \cite{Post}. 

\begin{definition}
\label{def:plabic_graph}
A \textbf{plabic graph} $G$ is an undirected graph embedded in a disk satisfying that each boundary vertex (on the disk) is incident to exactly one internal edge together with a proper 2-coloring on vertices.
Let $F(G)$ denote faces of $G$, $E(G)$ denote the edges of $G$, and $V(G) = B \sqcup W$ the vertices of $G$. We label vertices on the boundary (disk) of $G$ clockwise cyclically with $[n]$ and all boundary vertices are black. We say $G$ is of type $(k,n)$ if $|B| - |W| = n-k$.
\end{definition}

\Cref{fig:strands} gives an example plabic graph $G_0$. We proceed in illustrating the so-called ``reduction moves" and ``local moves" on $G$ in Figures \ref{fig:reduction} and \ref{fig:local_moves}. The reduction moves are (R1) Parallel edge reduction, (R2) (internal) Leaf reduction, and (R3) Dipole contraction. The local moves are (M1) Square move and (M2) Bivalent vertex contraction or expansion. For further details on these moves, see {\cite[Section 12]{Post}}. We define the \textit{move equivalence class} of $G$ to be the set of all other plabic graphs $G'$ which can be reached by applying some sequence of (M1) and (M2) to $G$. We say $G$ is \textbf{reduced} if no other $G'$ in the move equivalence class of $G$ is such that we can apply a reduction move to $G'$ {\cite[Definition 12.5]{Post}}. We assume $G$ is always reduced. 

\begin{figure}[ht]
\begin{center}
\includegraphics[width=0.9\linewidth]{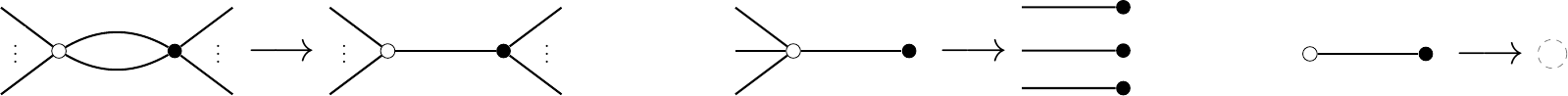}
\caption[Reduction Moves]{(Left): Parallel edge reduction (R1); (Middle): Leaf reduction (R2); (Right): Dipole reduction (R3).}
\label{fig:reduction}
\end{center}
\end{figure}

\begin{figure}[ht]
\begin{center}
\includegraphics[width=.9\linewidth]{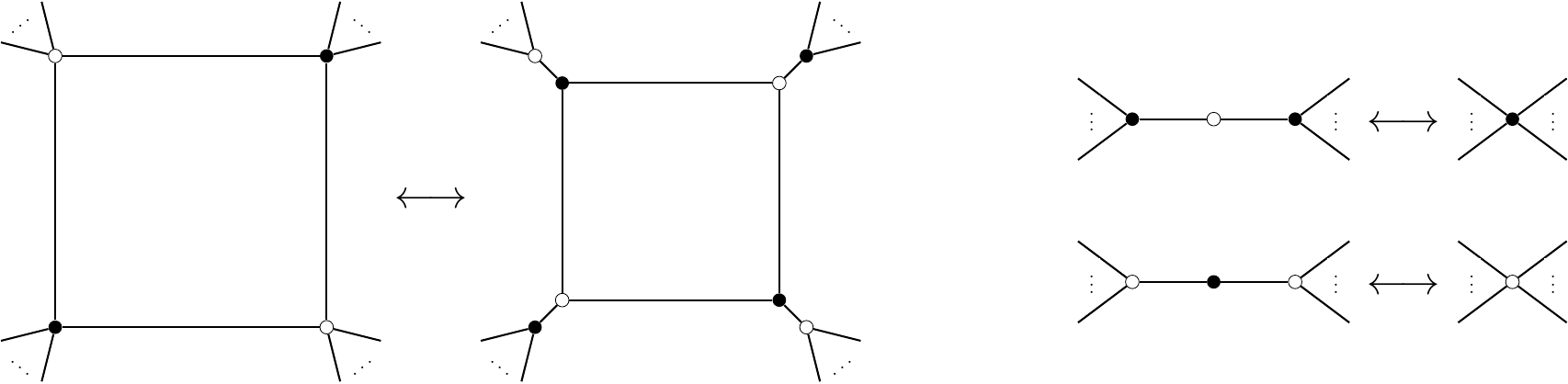}
\caption[Local Moves]{(Left): Square move (M1); (Right): Bivalent vertex contraction and expansion (M2).}
\label{fig:local_moves}
\end{center}
\end{figure}

While (R2) allows for removing internal leaves, leaves appearing on the boundary of $G$ are allowed and we call such leaves \textbf{lollipops} (see \Cref{fig:lollipops}). Suppose that such a lollipop is incident to the boundary vertex $i$ and the internal vertex of the lollipop is $v \in V(G)$. We say that $v$ is a \textit{white} lollipop at $i$ if $v \in W$ and otherwise that $v$ is a \textit{black} lollipop at $i$ if $v \in B$. Since we require a proper 2-coloring, if $v$ is a black lollipop at $i$, then we insert a white vertex $w$ between $i$ and $v$ by (M2). We now introduce an invariant of $G$ under equivalence moves and an important object of study for developing our theory of height functions. 

\begin{figure}[ht]
\begin{center}
\includegraphics[width = .3\linewidth]{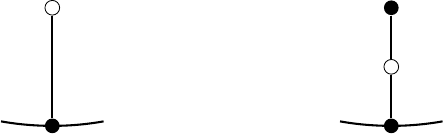} 
\caption[Lollipops]{(Left): A white lollipop; (Right): A black lollipop.}
\label{fig:lollipops}
\end{center}
\end{figure}

\begin{definition}[\cite{Post}]
\label{def:strands}
A \textbf{Postnikov} (also \textbf{zigzag}) \textbf{strand} $\tau$ is a directed path traversing edges of $G$ which originates at a boundary vertex $b_i$ and terminates at some boundary vertex $b_j$ such that at any $w \in W$, $\tau$ turns maximally left, and at each $b\in B$, $\tau$ turns maximally right.
\end{definition}

\Cref{fig:strands} also gives an example of strands overlaid on $G_0$. We consider each strand up to homotopy and obtain the following consequences from the reduced property of $G$. Each strand $\tau$ has a unique origin and unique terminal boundary vertex with no self-intersections {\cite[Theorem 13.2, Corollary 14.2]{Post}}. For any strand $\tau$ we write $\vec{\tau}_i$ (or simply $\vec{i}$) to emphasize that $\tau$ terminates at the $i$th boundary vertex and $\cev{\tau}_i$ (or simply $\cev{i}$) to emphasize that $\tau$ originates at the $i$th boundary vertex. Further, for any pair of strands $\tau$ and $\delta$, these strands intersect transversely with a finite number of intersections, and the orientations of $\tau$ and $\delta$ at their intersections are opposite of each other {\cite[Theorem 13.2, Corollary 14.2]{Post}}.

\begin{figure}[ht]
\begin{center}
\includegraphics[width = 6.5in]{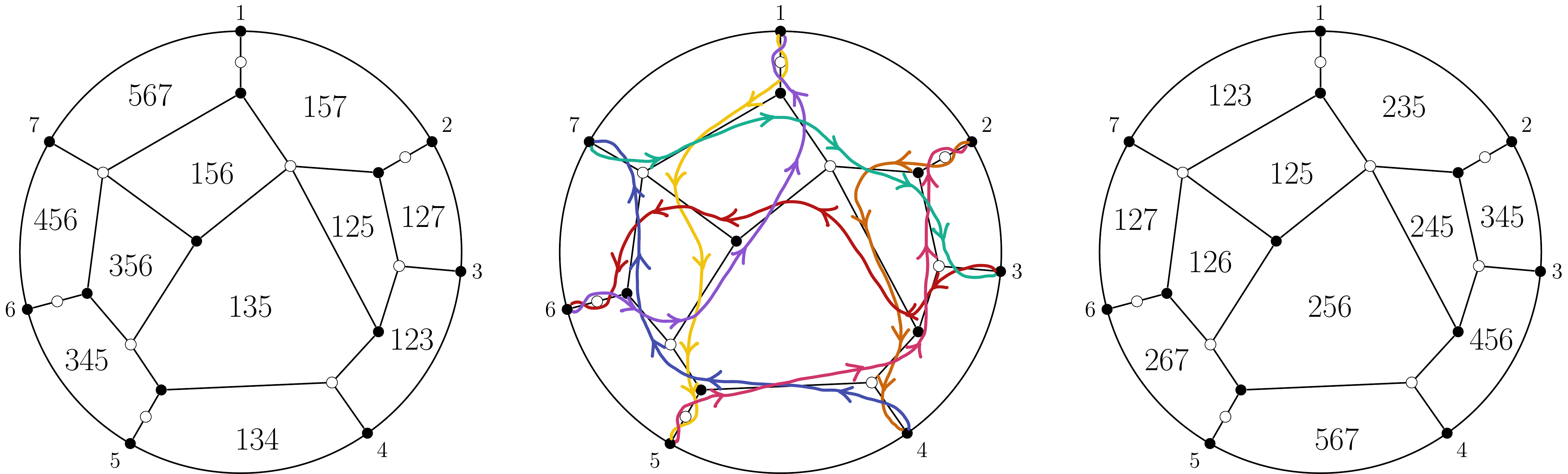}
\caption{(Left): The source-labeling of $G$; (Middle): The strands of $G$ with orientations indicated by arrowheads; (Right): The target-labeling of $G$.}
\label{fig:strands}
\end{center}
\end{figure}

We introduce a particularly useful edge-labeling of $G$ by strands as in \cite{KW}. For each $e \in E(G)$ traversed by strands $\cev{\tau}_i$ and $\cev{\tau}_j$, we label $e$ with $\{i,j\}$. If $i < j$, we label $e$ with $[i,j]$ to encode this ordering. The following definition gives an equivalent characterization of reduced $G$. 

\begin{definition}[{\cite[Definition 10.3]{KW}}]
    \label{def:res_prop}
Let $G$ have the edge-labeling described above and for any internal $v \in V(G)$, let $E_v \subseteq E(G)$ be the collection of edges incident to $v$. Then $G$ has the \textbf{resonance property} if and only if for each $v \in V(G)$, edges of $E_v$ have labels $[i_1,i_2],[i_2,i_3], \ldots, [i_{m-1},i_m], [i_1,i_m]$ appearing in clockwise order about $v$ such that $i_1 < i_2 < \cdots < i_m$.
\end{definition}

A depiction of the resonance property is given locally at both white and black vertices in \Cref{fig:resprop}. Then the result that the resonance property coincides with reduced $G$ is as follows. 

\begin{theorem}[{\cite[Theorem 10.5]{KW}}]
    \label{thm:resonance_is_reduced}
A plabic graph $G$ is reduced if and only if $G$ has the resonance property. 
\end{theorem}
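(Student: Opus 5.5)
We prove \Cref{thm:resonance_is_reduced} by reducing both directions to Postnikov's characterization of reducedness in terms of strands \cite[Theorem 13.2]{Post}. That result says $G$ is reduced if and only if three things hold: no strand has a self-intersection; no two strands form a bad double crossing, i.e.\ two intersection points traversed in the same order by both strands, cutting out a lens; and the only closed strands are those at lollipops. The resonance property is a local condition on the strand labels around each vertex, so the plan is to match each local labeling pattern with the absence of these global strand pathologies.

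For the direction reduced $\Rightarrow$ resonance, fix an internal vertex $v$ of degree $m$. The $m$ strands passing through $v$ enter and exit along consecutive edges. Each edge carries the labels of exactly two strands, one running each way. The strands through $v$ are pairwise distinct, since a repeat would give a self-intersection. List the boundary sources $i_1<\dots<i_m$ of these strands. We must show that going clockwise around $v$, consecutive edges share consecutive sources, with the wrap-around edge labeled $[i_1,i_m]$. Cut the disk along the strand segments through $v$. Because there are no bad double crossings, the strands meeting at $v$ separate the boundary into arcs compatibly with the cyclic order around $v$. Hence the sources read around $v$ appear in the same cyclic order as on the boundary.

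From this, the labels around $v$ form the cyclic pattern $[i_1,i_2],\dots,[i_{m-1},i_m],[i_m,i_1]$. The remaining point is that the start of the cyclic sequence matches linear order on $[n]$, i.e.\ exactly one edge is the wrap-around. This follows because the pattern is fixed by where the boundary vertex $1$ lies relative to the sectors at $v$.

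For resonance $\Rightarrow$ reduced, we argue by contrapositive. Suppose $G$ is not reduced. Using \cite[Theorem 13.2]{Post}, apply moves (M1) and (M2) until a reduction configuration (R1)–(R3) appears. We first check that (M1) and (M2) preserve the resonance property. This is a local check on a square face and on a bivalent vertex, since strand labels change only by permuting locally. We then check that each of the configurations (R1), (R2) and (R3) violates resonance. For parallel edges, two edges at a vertex carry the same label pair. A leaf at an internal vertex has a single edge, and the strand turning back on it makes that edge's two labels coincide. In a dipole, the two strands cross twice in the wrong order, which forces a label pattern at one endpoint that is not of the required consecutive form.

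The main obstacle is the forward direction, namely turning the global ``no bad double crossing'' condition into the exact cyclic order of labels around every vertex. I would handle it by a planarity argument. Suppose two strands through $v$ had their sources out of order relative to their positions around $v$. Then following them from $v$ back to their sources would force them to cross again, and the two crossings would be in the same order on both strands, forming a forbidden lens.
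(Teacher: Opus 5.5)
The paper gives no proof of this statement; it quotes \cite[Theorem 10.5]{KW}. So your argument has to stand on its own. Its overall architecture is workable, and your reformulation of resonance is correct: the sources of the strands through $v$, read in clockwise sector order, must be in clockwise boundary order. But the step you flag as the main obstacle does not work as written.

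First, cyclic order is a condition on triples, so ``two strands out of order'' has no meaning. More seriously, the lens argument only applies to strands that are \emph{adjacent} at $v$. Let $s_1,\dots,s_m$ be the strands through $v$ in clockwise sector order. Then $s_r$ and $s_{r+1}$ cross on the edge separating their sectors. That crossing is the end of the backward arc (from $v$ to the source) of one of them, and it lies after $v$ on the other. So any further crossing of their two backward arcs would be met first by both strands, which is a bad double crossing.

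Strands that are not adjacent at $v$ (possible once $\deg v\ge 4$) do not cross at $v$. A crossing of their backward arcs therefore contradicts nothing. What the lens argument actually gives is only this: consecutive backward arcs $\gamma_r,\gamma_{r+1}$ meet only at $v$. You still need a lemma that this alone forces the sources into the clockwise order in which the $\gamma_r$ leave $v$. The lemma is true but not automatic. Two ways to prove it:
\begin{itemize}
\item Lift to the universal cover of the punctured disk. Disjointness of $\gamma_r$ and $\gamma_{r+1}$ puts the gap between consecutive lifted endpoints in $(0,2\pi)$, and the $m$ gaps sum to $2\pi$.
\item Split $v$ into trivalent vertices by (M2), which preserves reducedness. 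At a trivalent vertex every pair of strands is adjacent, and three pairwise disjoint arcs suffice. Then recombine: resonance at the two halves of an (M2) expansion implies resonance at the merged vertex, because the merged sector sequence is two cyclically increasing sequences glued along their two shared strands. You need this fact anyway for (M2)-invariance.
\end{itemize}

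In the converse direction, reducing to local checks is sound. Non-reducedness means, by definition, that some move-equivalent graph admits a reduction move, so Postnikov's Theorem 13.2 is not needed there. Your (M1) and (M2) checks also go through. The moves do not change through which external edges each strand enters and leaves. For (M1), the conditions at the four square vertices are, both before and after the move, equivalent to the four strands entering the square being in cyclic order.

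However, your (R3) analysis describes the wrong configuration. A dipole is an isolated component consisting of two vertices joined by an edge, traversed by a single closed strand. That edge has no source labels at all, and this is what violates resonance. Bad double crossings belong to the strand criterion; they are not one of (R1)--(R3).

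You also need to locate the violations carefully in (R1) and (R2):
\begin{itemize}
\item In (R1), at a bivalent vertex the pattern $[i_1,i_2],[i_1,i_2]$ is allowed. So parallel edges contradict resonance only at an endpoint of degree at least $3$, where one strand occupies two sectors.
\item In (R2), a boundary lollipop also has a leaf edge carried twice by one strand, yet lollipops occur in reduced graphs. So the contradiction must be read at the internal neighbour of the leaf, where a label $\{\alpha,\alpha\}$ cannot have the form $[i_r,i_{r+1}]$.
\end{itemize}
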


Note that the choice of labeling edges of $G$ by indices of strands $\cev{\tau}_i$ can be interchanged with labeling edges of $G$ by indices of strands $\vec{\tau}_i$ to equivalently define the resonance property. This will allow for a more robust use of the resonance property with respect to source- and target-labeling of $G$.

One can verify that the collection of all $n$ strands of $G$ give rise to a permutation $\pi_G$ defined by $\pi_G(i) = j$ if $\cev{\tau_i} = \vec{\tau_j}$. Further, we upgrade $\pi_G$ to a decorated permutation $\tilde{\pi}_G$ where for any $\cev{\tau_i} = \vec{\tau_i}$, we write $\tilde{\pi}_G(i) = \underline{i}$ if there is a black lollipop at boundary vertex $i$ and $\tilde{\pi}_G(i) = \overline{i}$ if there is a white lollipop at boundary vertex $i$. We refer to $\tilde{\pi}_G$ as the \textbf{trip permutation} of $G$. We often will omit the subscript $G$ when referencing the trip permutation $\tilde{\pi}_G$ and simply write $\tilde{\pi}$ due to the following theorem.

\begin{theorem}[Fundamental theorem of reduced plabic graphs {\cite[Theorem 13.4]{Post}},{\cite[Theorem 7.1.23]{FWZ7}}]
\label{thm:fundamental_thm_reduced_plabic_graphs}
For $G$ and $G'$ reduced plabic graphs, then $G$ and $G'$ are move equivalent if and only if $G$ and $G'$ have the same trip permutation.     
\end{theorem}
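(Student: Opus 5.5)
The plan has two directions, of which the first is routine. Suppose $G$ and $G'$ are move equivalent. Then it suffices to check that each local move (M1) and (M2) preserves the trip permutation, including the decorations on fixed points.

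For (M2), a bivalent vertex forces every strand through it to pass straight along the path. Contracting or expanding such a vertex therefore only shortens or lengthens the strands locally and does not change which boundary vertex they reach. Lollipop colors are also preserved, since (M2) is exactly the move the paper uses to insert a white vertex in front of a black lollipop.

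For (M1), there are four strands entering the square along its four external edges. A direct check of the maximal-left and maximal-right turning rules, before and after swapping the colors, shows that each entering strand exits along the same external edge in both configurations; only the internal routing is rearranged. Hence $\tilde{\pi}_G=\tilde{\pi}_{G'}$.

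For the converse, take reduced $G,G'$ with $\tilde{\pi}_G=\tilde{\pi}_{G'}$ and show that both are move equivalent to a single canonical graph determined by $\tilde{\pi}$. I would use the Le-diagram (or bridge decomposition) construction, which builds from $\tilde{\pi}$ a reduced plabic graph $G_{\tilde{\pi}}$. The steps are as follows.

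1. Use the fact that reduced graphs have strands without self-intersections, with pairwise intersections transverse and alternating in orientation, i.e. no bad double crossings. By \Cref{thm:resonance_is_reduced}, this is equivalent to the resonance property.

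2. Show that the number of faces of a reduced graph is determined by $\tilde{\pi}$. It equals $k(n-k)-\ell(\tilde{\pi})+1$, with $\ell(\tilde{\pi})$ as in \Cref{def:alignments}. The argument is an Euler-characteristic count using that each crossing pair of strands meets the minimal number of times.

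3. Induct on $n$ plus the number of faces. Pick a boundary vertex $i$ with $\tilde{\pi}(i)\neq i$ and $\tilde{\pi}(i)\ne i+1$ such that some adjacent pair $i,i+1$ forms a bridge, or else a lollipop. Show that any reduced $G$ with this trip permutation can be moved, by (M1)/(M2), to one where a bridge sits at boundary vertices $i,i+1$. Removing the bridge yields a reduced graph with trip permutation $\tilde{\pi}\circ(i\ i{+}1)$ of smaller length. By induction, the two reduced graphs obtained from $G$ and $G'$ are move equivalent; reattaching the bridge gives $G\sim G'$.

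I expect step 3, \emph{moving a bridge to the boundary}, to be the main obstacle. My first approach would be the following. Consider the face $f$ adjacent to the boundary between $i$ and $i+1$ together with the strands $\cev{\tau}_i$ and $\cev{\tau}_{i+1}$. Because they cross (as determined by $\tilde{\pi}$ and reducedness), there is a region bounded by these two strands between the boundary and their first crossing.

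Inside that region I would argue by a minimal-area, lens-type argument on strand diagrams, in the style of Thurston's triple-crossing diagrams. If the region contains other faces, then some square face inside it admits a square move that decreases the region's area. Bivalent contractions handle degenerate cases. Termination comes from the finiteness of faces. This is the part requiring the most care, and if it proves unwieldy, I would instead cite the equivalent statement for Thurston's triple-crossing diagrams and translate.
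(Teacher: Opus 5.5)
The paper does not prove this statement. It is imported from \cite{Post} and \cite{FWZ7}, so your argument has to stand on its own.

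Your forward direction is fine: it is the standard local check that (M1) and (M2) preserve the decorated trip permutation.

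The converse has a genuine gap exactly where you expect one. Step 3 claims that every reduced $G$ with trip permutation $\tilde{\pi}$ is move equivalent to a graph with a bridge at $(i,i+1)$. That claim is essentially the entire content of the theorem, and the mechanism you sketch does not establish it.

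\begin{itemize}
\item A square move at a face $f$ only reroutes the four strands passing around $f$ (your own forward check). It changes the face labeling only at $f$, where $Sac$ becomes $Sbd$.
\item Let $T$ be your region, bounded by the boundary arc from $i$ to $i+1$ and the two strands up to their crossing $X$. A square move at a face of $T$ whose four strands do not include the two bounding strands leaves $T$, and the number of faces in it, unchanged.
\item So $T$ can only shrink through a square move at a face of $T$ that has a bounding strand among its four. The natural candidate is the face in the corner of $T$ at $X$. Nothing in your sketch shows that this face is, or can be made into, a square.
\item A pseudoline-style ``innermost empty lens or triangle'' argument does not transfer either. Every region of the strand diagram of a plabic graph is either cyclically oriented (a vertex of $G$) or alternating (a face). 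A triangle cannot be alternating, so an empty triangle is merely a trivalent vertex, on which no move acts.
\item Lenses do not help: good double crossings (e.g.\ at any bivalent vertex) are permitted in reduced graphs, so they are not defects to be removed.
\end{itemize}

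Showing that the configuration inside $T$ can always be simplified requires using reducedness (no self-intersections, no bad double crossings) in an essential way. This is where the cited proofs do their work. Your fallback of translating Thurston's theorem on triple diagrams amounts to citing the result, which is what the paper does.

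There are also smaller gaps in the induction.
\begin{itemize}
\item You need an explicit condition on $\tilde{\pi}$ under which the two strands at $i,i+1$ cross; your stated condition is garbled. You also need a proof that such an $i$ exists once lollipops are discarded.
\item You need that deleting the bridge leaves a reduced graph. This follows from Postnikov's strand criterion. $X$ is the first crossing of both strands, so reducedness forbids a second crossing. The new strands are the old ones with their initial boundary segments swapped, so no violation is created.
\item You need that moves on the smaller graph lift once the bridge is reattached.
\item Deleting a bridge lowers the dimension, so under the paper's convention $\ell(\tilde{\pi})$ goes up, not down. This does not hurt an induction on the number of faces.
\item Step 2 is unnecessary, since deleting a bridge lowers the face count by exactly one.
\end{itemize}
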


Now since decorated permutations index $\Pi_\mathcal{P}$ and the trip permutation $\tilde{\pi}$ of any reduced plabic graph $G$ is a decorated permutation, then also move equivalence classes of reduced $G$ give a parameterization of $\Pi_\mathcal{P}$. For this reason, we say $G$, up to move equivalence, is the plabic graph of $\Pi_\mathcal{P}$. Using {\cite[Proposition 16.4]{Post}}, for $\gneck$ (or $\rgneck$) and $G$ corresponding to the same positroid cell $\Pi_\mathcal{P}$, then $\tilde{\pi}\left[\gneck\right] = \tilde{\pi}_G$, that is, the decorated permutations coming from Grassmann and reverse Grassmann necklaces are exactly the trip permuations for $G$. 

\begin{example}
\label{ex:square_move_w_strands} 
Let $G_0$ be the plabic graph of \Cref{fig:strands}. Then $G_0$ has the corresponding trip permutation $\tilde{\pi} = 5467213$, indeed the same decorated permutation of $\mathcal{P}(B)$ in \Cref{ex:positroid_example_matrix_a}, so $G_0$ is the plabic graph of $\mathcal{P}(B)$ and we will write $G_B$ in place of $G_0$. 
\end{example}

\subsection{The face labeling of \textit{G}}

We associate face labels to $G$ via the data of Postnikov strands under the following conventions as introduced in \cite{QCScott},\cite{JS} under the guise of \textit{double-wiring diagrams} and \textit{alternating strand diagrams}, respectively. 

\bi
\item  \textbf{Source-labeling.} For each strand $\cev{i}$, we label each face to the left of $\cev{i}$ with index $i$. We denote the set of all source-labeled faces of $G$ by $\cev{F}(G)$.
\item \textbf{Target-labeling.} For each strand $\vec{i}$, we label each face to the left of $\vec{i}$ with index $i$. We denote the set of all target-labeled faces of $G$ by $\vec{F}(G)$.
\ei

\Cref{fig:strands} gives source- and target-labeling of $G_B$ in our running example. After fixing a labeling on $G$, we refer to the strand which contributes the index $i$ to face labels as the $i$th-strand. For example, in target-labeling, we would call the strand $\vec{i}$ the $i$th-strand. 

Let $\overline{a+b} \equiv a+b \mod{n}$ denote modular arithmetic taken modulo $n$. We then say that a trip permutation $\tilde{\pi}$ is of \textbf{type} $\mathbf{(k,n)}$ if we observe $k = \frac{1}{n}\SumBlank{1 \le i \le n}(\overline{\tilde{\pi}(i) - i)} + \SumBlank{\substack{i \\ \tilde{\pi}(i) = \overline{i}}}1$ {\cite[Section 3]{KLS}}{\cite[Section 2]{MSTwist}}. Unsurprisingly, for any $G$ of type $(k,n)$, then $\tilde{\pi}_G$ is of type $(k,n)$. 

\begin{proposition}[{\cite[Proposition 8.3]{OPS}}{\cite[Proposition 4.3]{MSTwist}}]
\label{prop:plabic_type_kn}
For a reduced plabic graph $G$ with trip permutation of type $(k,n)$, then each face of $G$ lies to the left of exactly $k$ strands.
\end{proposition}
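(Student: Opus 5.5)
We propose to prove \Cref{prop:plabic_type_kn} in two steps. First we show that the number of strands having a given face on their left is the same for every face of $G$. Then we compute this common value at the boundary and identify it with the type $k$ of $\tilde{\pi}$.

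\textbf{Step 1: the count is constant.} For a face $f$, let $N(f)$ denote the number of strands $\tau$ such that $f$ lies to the left of $\tau$. Take two faces $f,f'$ sharing an edge $e$. Exactly two strands traverse $e$, one on each side, and they traverse it in opposite directions; this follows from the turning rules and is recorded in the edge-labeling of \cite{KW} used for \Cref{def:res_prop}.

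\begin{itemize}
\item Crossing $e$ from $f$ to $f'$ crosses both of these strands.
\item Since they have opposite orientations along $e$, one of them has $f$ on its left and $f'$ on its right, and the other has the reverse.
\end{itemize}

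So crossing $e$ removes one strand from the count and adds one. Now consider any strand $\delta$ not traversing $e$. Strands have no self-intersections and run boundary to boundary (\cite[Theorem 13.2, Corollary 14.2]{Post}), so $\delta$ separates the disk, and $f,f'$ lie on the same side of $\delta$. Hence $N(f)=N(f')$. The dual graph of $G$ in the disk is connected, so $N$ is constant on $F(G)$.

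\textbf{Step 2: evaluate at a boundary face.} Take the boundary face $f_{n,1}$, the face bounded by the boundary arc between boundary vertices $n$ and $1$. We need to count the strands having it on their left. The approach is to think of each strand $\tau$ from $i$ to $\pi(i)$ as a chord, with homotopy inside the disk being what matters.

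\begin{itemize}
\item Suppose the boundary arc from $i$ clockwise to $\pi(i)$ contains the arc between $n$ and $1$. Then $f_{n,1}$ lies on a fixed side of the chord, which by the clockwise labeling we take to be the left. This happens exactly when $\pi(i)<i$, i.e.\ the strand "wraps" past $n$.
\item Suppose $i$ is a fixed point. A white lollipop at $i$ puts everything on one side and contributes $1$. A black lollipop contributes $0$. We must match this against the overline convention $\tilde{\pi}(i)=\overline{i}$.
\end{itemize}

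Thus $N(f_{n,1}) = \#\{i: \pi(i)<i\} + \#\{i:\tilde{\pi}(i)=\overline{i}\}$.

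\textbf{Step 3: match with the type formula.} It remains to check that this number equals $\frac{1}{n}\sum_i \overline{\tilde{\pi}(i)-i}$ plus the number of white fixed points. Write $\overline{\pi(i)-i}=\pi(i)-i+n\cdot[\pi(i)<i]$ for non-fixed $i$. Summing over $i$, the terms $\pi(i)-i$ cancel because $\pi$ is a bijection. This gives $\frac1n\sum_i\overline{\pi(i)-i}=\#\{i:\pi(i)<i\}$, with fixed points contributing $0$.

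The main obstacle is orientation bookkeeping. We must confirm that "left of the strand $i\to\pi(i)$" really corresponds to the wrapping arc under the clockwise labeling and the turning rules of \Cref{def:strands}, and that the white and black lollipop conventions line up. If the convention yields the complementary arc, the count becomes $n-k$ rather than $k$. In that case we would recheck the direction by testing on a single white lollipop, where the answer must be $1$ for $k=1,n=1$, and on the graph $G_B$ of \Cref{fig:strands}, where each face should carry $3$ labels.
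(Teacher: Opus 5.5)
Your argument is correct and is essentially the standard proof behind the results the paper cites; the paper itself gives no proof. In that argument, labels change by a single exchange across each edge, and at the boundary face between $n$ and $1$ your count is just the target label $\gneckset{1}$, whose size your Step 3 computation identifies with $k$.

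The orientation check you left open comes out as you assumed, and it can be settled by a local picture rather than by examples. With clockwise boundary labeling, the strand leaving $i$ along its boundary edge has the boundary face clockwise of $i$ on its left. The strand arriving at $j$ has the boundary face counterclockwise of $j$ on its left. So the left side of the strand $i\to\pi(i)$ contains the clockwise arc from $i$ to $\pi(i)$, which also covers the endpoint cases $i=n$ or $\pi(i)=1$. Finally, a white lollipop strand circles its leaf with the leaf on its right, so it has every face on its left and contributes $1$.
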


From \Cref{prop:plabic_type_kn}, we have that any $f \in F(G)$, independent of labeling, is labeled by $k$ elements of $[n]$. In particular, we will let $f$ refer to both the region of $G$ and the face label in $\dbinom{[n]}{k}$. From the perspective of cluster algebras, $G$ encodes a seed for $\poring$ where each $f \in F(G)$ corresponds to the Pl\"ucker coordinate $\Delta_f$. Generally, choice of source- or target-labeling will result in $f\in F(G)$ having differing face labels, i.e. differing (isomorphic) cluster algebras. However, for faces $f \in F(G)$ which lie on the boundary disk of $G$, these faces are labeled by $\rgneck$, resp. $\gneck$, in source-labeling, resp. target-labeling. 

\begin{proposition}[{\cite[Proposition 8.3 (1)]{OPS}}{\cite[Proposition 4.3]{MSTwist}}]
\label{prop:gneck_rgneck_as_faces}
    For reduced, source-labeled $G$ parameterizing the positroid cell $\Pi_\mathcal{P}$, the boundary face $f$ contained between boundary vertices $i$ and $i+1$ is $\rgneckset{i}$ where $\rgneck$ is the reverse Grassman necklace of $\Pi_\mathcal{P}$. For reduced, target-labeled $G$ parameterizing the positroid cell $\Pi_\mathcal{P}$, the boundary face $f$ contained between boundary vertices $i$ and $i+1$ is $\gneckset{i+1}$ where $\gneck$ is the Grassman necklace of $\Pi_\mathcal{P}$.
\end{proposition}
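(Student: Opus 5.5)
We prove the source-labeled statement directly from the strand picture, and then derive the target-labeled statement by the same argument with every strand orientation reversed. Fix the boundary face $f$ lying between boundary vertices $i$ and $i+1$. By \Cref{prop:plabic_type_kn}, $f$ lies to the left of exactly $k$ strands. Its source label is $\{\,j : f \text{ lies to the left of } \cev{\tau}_j\,\}$. Each strand $\cev{\tau}_j$ runs from boundary vertex $j$ to boundary vertex $\pi(j)$ and has no self-intersections. Up to homotopy, then, it cuts the disk into two regions, and the boundary arc between $i$ and $i+1$ lies on exactly one side. Whether the boundary face $f$ lies to the left of $\cev{\tau}_j$ is therefore decided purely by the cyclic positions of $j$, $\pi(j)$ and the gap $(i,i+1)$ on the boundary circle. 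The interior structure of $G$ plays no role.

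Next we make the side of each strand explicit. The boundary vertices are labeled clockwise, and the strand goes from $j$ to $\pi(j)$. Its left side is the boundary arc traversed counterclockwise from $j$ to $\pi(j)$, equivalently clockwise from $\pi(j)$ to $j$. So $f$ is to the left of $\cev{\tau}_j$ exactly when the gap $(i,i+1)$ lies on the clockwise arc from $\pi(j)$ back to $j$. Reading the arc starting at $i+1$, this says $j <_{i+1} \pi(j)$ fails, that is, $j >_{i+1} \pi(j)$. One should check this orientation convention against the running example $G_B$, where $\rgneckset{1}=157$. If the convention turns out reversed, the roles of $<$ and $>$ simply swap, and the target-labeled case swaps with it.

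Fixed points need separate treatment. When $\pi(j)=j$, the strand is a short loop around a lollipop. A white lollipop gives $\tilde{\pi}(j)=\overline{j}$, and the strand turning maximally left at the white vertex encloses everything else on its left. So every boundary face, and indeed every face, is to the left of it, and $j$ appears in $f$. A black lollipop encloses nothing, so $j$ does not appear. Combining the generic and fixed-point cases, the label of $f$ is $\{\, j : j >_{i+1} \tilde{\pi}(j) \text{ or } \tilde{\pi}(j)=\overline{j}\,\}$. This is exactly the explicit formula for $\rgneckset{i}$ from \Cref{sec:positroid_combo}, using \Cref{lem:necklace_dec_perm_bijection} and the fact that $\tilde{\pi}_G=\tilde{\pi}$.

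The target-labeled case runs identically. Face $f$ is labeled by $j$ when it lies left of the strand $\vec{\tau}_j$, which runs from $\tilde{\pi}^{-1}(j)$ to $j$. The same arc analysis gives $\{\, j : j <_{i+1} \tilde{\pi}^{-1}(j) \text{ or } \tilde{\pi}(j)=\overline{j}\,\}$, which is $\gneckset{i+1}$. The main obstacle is the orientation and cyclic-order bookkeeping: deciding which boundary arc is "left" for a clockwise labeling, and making sure the strict inequalities handle the endpoint cases $j=i+1$ and $\pi(j)=i$ correctly. I would pin these down using the example $G_B$ and the lollipop convention.
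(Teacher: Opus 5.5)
Your strategy is the standard argument behind the cited results; the paper only quotes them from \cite{OPS} and \cite{MSTwist} and gives no proof of its own. A reduced strand is a simple arc, so the side on which the boundary face between $i$ and $i+1$ lies is determined by the cyclic positions of the strand's endpoints, and lollipops are handled separately. Your final sets and your lollipop analysis are correct.

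The orientation step as written, however, contains two errors that happen to cancel.
\begin{enumerate}
\item With clockwise boundary labeling, the left side of a simple strand from $A$ to $B$ contains the boundary arc traversed \emph{clockwise} from $A$ to $B$, not counterclockwise. Put $A$ at 6 o'clock and $B$ at 12 o'clock: the strand heads up, and its left is the 9 o'clock side, which is the clockwise arc $A \to B$.
\item The gap $(i,i+1)$ lying on the clockwise arc from $\tilde\pi(j)$ to $j$ is equivalent to $j <_{i+1} \tilde\pi(j)$, not to its negation. Starting at $i+1$ and moving clockwise inside that arc, you reach its end $j$ before its start $\tilde\pi(j)$.
\end{enumerate}
Taken literally, your stated rule would label the face between $1$ and $2$ of $G_B$ by $\{2,3,4,6\}$, which is an $(n-k)$-set, not $157$.

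The correct chain is as follows. The gap lies on the clockwise arc from $A$ to $B$ if and only if, reading clockwise from $i+1$, you meet $B$ before $A$, that is, $B <_{i+1} A$. With $(A,B)=(j,\tilde\pi(j))$ this gives $j >_{i+1} \tilde\pi(j)$. With $(A,B)=(\tilde\pi^{-1}(j),j)$ it gives $j <_{i+1}\tilde\pi^{-1}(j)$. Since $i+1$ is $(i+1)$-minimal and $i$ is $(i+1)$-maximal, the endpoint cases $j\in\{i,i+1\}$ or $\tilde\pi(j)\in\{i,i+1\}$ are handled automatically. For $\tilde\pi=5467213$ and $i=1$ this yields $157=\rgneckset{1}$ and $235=\gneckset{2}$.

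You should also drop the fallback that a reversed convention would ``simply swap $<$ and $>$'' and swap the source and target cases. The orientation is forced by the definitions. Under the reversed rule the source labels would be $\{j : j<_{i+1}\tilde\pi(j)\}$ (together with the lollipop contributions), which in general is not a term of either necklace of $\mathcal{P}$ and need not have $k$ elements. The proposition would then fail rather than relabel, so the one-line geometric fact above must be proved, not left as an after-the-fact check.
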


\begin{remark}
\label{rk:boundary_faces}
One can observe that the sequences $\rgneck$ and $\gneck$ of \Cref{ex:positroid_example_matrix} appear as the boundary face labels in source- and target-labeling of $G_B$ in \Cref{fig:strands} as is consistent with \Cref{prop:gneck_rgneck_as_faces}.
\end{remark}

\subsection{Results of almost monotonicity on plabic graphs}

We study local properties of face labels of $G$ which we obtain from strands. As was in the previous section, we let any face $f \in F(G)$ refer to both the face label and region of $G$. The properties of face labels obtained in this subsection will be crucial in defining and developing the theory of height functions on $G$ in \Cref{ssec:heights}.

\begin{figure}[ht]
    \centering
    \includegraphics[width=2in]{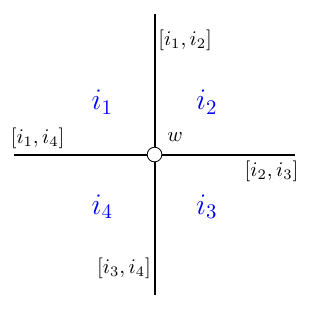} \hspace{7em} \includegraphics[width=2in]{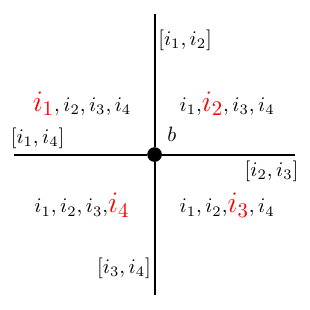}
    \caption{(Left): The resonance property at a white vertex $w$ and its implications for \textit{included} face labels indices (in blue) as in \Cref{lem:common_face_indexing}; (Right): The resonance property at a black vertex $b$ and its implications for \textit{excluded} face labels indices (in red) as in \Cref{lem:common_face_indexing}.}
    \label{fig:resprop}
\end{figure}

\begin{lemma}
\label{lem:common_face_indexing}
For fixed $w \in W$, let $\mathbf{f}_w = \{f_1,f_2,\ldots,f_p\}$ be the collection of faces incident to $w$ such that $f_i$ is adjacent to $f_{i-1}$ and $f_{i+1}$ with indices taken $\mod p$. Then for each $i \in [p]$, $f_i = S_w \cup \{j_i\}$ where $S_w$ is the unique $(k-1)$-subset of $[n]$ such that $S_w = \bigcap_{i=1}^p f_i$. Moreover, there exists a choice of indexing such that $j_1 > j_2 > \cdots > j_p$ when traversing faces of $\mathbf{f}_w$ counterclockwise about $w$. 

For fixed $b \in B$, let $\mathbf{g}_b = \{g_1,g_2,\ldots,g_p\}$ be the collection of faces incident to $b$ such that $g_i$ is adjacent to $g_{i-1}$ and $g_{i+1}$ with indices taken $\mod p$. Then for each $i \in [p]$, $g_i = T_b \setminus \{j_i\}$, where $T_b$ is the unique $(k+1)$-subset of $[n]$ such that $T_b = \bigcup_{i=1}^p g_i$. Moreover, there exists a choice of indexing such that $j_1 > j_2 > \cdots > j_p$ when traversing faces of $\mathbf{g}_b$ counterclockwise about $b$.
\end{lemma}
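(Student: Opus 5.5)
Fix a labeling, say source-labeling; target-labeling is handled the same way, using the remark after \Cref{thm:resonance_is_reduced} that the resonance property may be stated with target indices. The plan is to work locally at a single internal vertex. We describe exactly how the strands pass through the vertex, then read off how face labels change between consecutive faces around it.

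\textbf{Step 1 (one label changes across each edge).} Let $v$ be an internal vertex with incident edges $e_1,\dots,e_p$ in cyclic order. The faces around $v$ are the wedges $f_i$ between $e_i$ and $e_{i+1}$. Each edge carries exactly two strands, running in opposite directions. Because strands turn maximally left at white vertices and maximally right at black vertices, each strand through $v$ enters along one edge and leaves along the cyclically adjacent edge. So each strand cuts off exactly one wedge at $v$. Crossing edge $e_i$ from wedge $f_{i-1}$ to wedge $f_i$ therefore means crossing two strands locally. One of them is the strand hugging wedge $f_{i-1}$, and the other is the strand hugging $f_i$. Hence the labels $f_{i-1}$ and $f_i$ differ only in those two indices.

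\textbf{Step 2 (white vertices).} At a white vertex, the left turns mean the strand cutting off wedge $f_i$ has $f_i$ on its left. All other faces around $w$ lie to its right. So the strand hugging $f_i$ contributes its index $j_i$ to $f_i$ and to no other wedge at $w$. Every other strand that meets a neighbourhood of $w$ puts all $p$ wedges on the same side of itself. Together with \Cref{prop:plabic_type_kn}, this gives $f_i = S_w \cup \{j_i\}$, where $S_w$ is a common $(k-1)$-set. The indices $j_i$ are distinct and none of them lies in $S_w$, so $S_w=\bigcap f_i$. Black vertices are handled dually: the right turns put $g_i$ to the right of the strand hugging it, so $g_i$ omits exactly that index and $g_i = T_b\setminus\{j_i\}$ with $T_b=\bigcup g_i$. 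One must check that the $j_i$ are pairwise distinct. This holds because a strand has no self-intersections in a reduced graph, so it cannot hug two wedges at the same vertex.

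\textbf{Step 3 (monotonicity).} The $j_i$ are exactly the strand labels on the edges at $v$. Edge $e_i$ carries the strands hugging $f_{i-1}$ and $f_i$, so its label is $\{j_{i-1},j_i\}$. Now apply \Cref{thm:resonance_is_reduced}. The clockwise edge labels are $[i_1,i_2],[i_2,i_3],\dots,[i_{m-1},i_m],[i_1,i_m]$ with $i_1<\dots<i_m$. Hence the consecutive indices $j_i$ read clockwise, starting just after the edge $[i_1,i_m]$, are $i_1<i_2<\dots<i_m$. Reading counterclockwise and reindexing gives $j_1>j_2>\dots>j_p$ at both white and black vertices.

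The main obstacle is the bookkeeping that matches the wedge labels $j_i$ with the edge labels from \Cref{def:res_prop}, and that fixes the orientation conventions (clockwise versus counterclockwise, left versus right) consistently for both colours. I would settle these by checking against \Cref{fig:resprop}.

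Degree-one vertices need a separate check. Lollipops have a single face around them and the claim is trivial there. Boundary vertices are excluded, since they are not internal vertices.
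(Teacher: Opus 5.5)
Your proposal is correct and follows essentially the same route as the paper. The maximal-turn rule shows that each strand through the vertex puts its index into exactly one incident face (white) or omits it from exactly one (black), and the Kenyon--Wilson resonance property then orders these indices cyclically; one small fix: ``no self-intersections'' only stops a strand from hugging two \emph{adjacent} wedges, but the strict inequalities $i_1<\cdots<i_m$ in the resonance property already force the $j_i$ to be distinct (and at a degree-one vertex the description $S_w=\bigcap f_i$ is degenerate rather than trivially true, a case the paper does not treat either).
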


\begin{proof}
Fix any $w \in W$. Any strand $\tau$ which approaches $w$ turns maximally left at $w$ and thus immediately leaves $w$. Thus, $\tau$ contributes a face label entry to only one face incident to $w$, namely, the face lying to the left of $\tau$ with respect to the strand's orientation. As this is true for all $f_\ell \in \mathbf{f}_w$, we have that each $f_\ell$ has a unique face label index, say $j_\ell$. By the resonance property, we have edge-labels $[i_1,i_2],[i_2,i_3],\ldots, [i_1,i_p]$ appearing in a clockwise order about $w$ such that $i_1 < i_2 < \cdots < i_p$. In particular, one can observe that the face between edges $[i_{\ell-1},i_\ell]$ and $[i_\ell, i_{\ell +1}]$ has the unique face label index $i_\ell$ for $\ell \in [p-1]$ and the face between edges $[i_{p-1},i_p]$ and $[i_1,i_p]$ contains the unique face label index $i_p$. This yields that traversing faces of $\mathbf{f}_w$ clockwise about $w$ yields a sequence $i_1 < i_2 < \cdots < i_p$ where each $i_\ell$ is a face label index unique to a single face incident to $w$. Setting $j_\ell = i_{p-\ell +1}$, we have the desired decreasing sequence $j_1 > j_2 > \cdots > j_p$ such that $f_\ell = S_w \cup \{j_\ell\}$ for all $f_\ell \in \mathbf{f}_w$ and $f_{\ell+1}$ is adjacent and counterclockwise from $f_\ell$ about $w$ for all $\ell \in [p]$. 

Now fix any $b \in B$. Any strand $\tau$ which approaches $b$ turns maximally right at $b$ and thus immediately leaves $b$. Thus, only one face $g_\ell \in \mathbf{g}_b$ lies to the right of $\tau$ and does not contain the face label index, say $i_\ell$, contributed by $\tau$. As this is true for all $g_\ell \in \mathbf{g}_b$, we have that each $g_\ell$ has a uniquely excluded face label index $i_\ell$ and contains the remaining $p-1$ face label indices contributed by the remaining strands approaching $b$. As each face of $G$ is labeled by some $k$-subset of $[n]$, then the face label of any $g_\ell \in \mathbf{g}_b$ is given as $g_\ell = (T'_b \cup \bigcup_{j=1}^p i_j) \setminus \{i_\ell\}$ where $T'_b \in \dbinom{[n]}{k-p+1}$ is common to each $g_s$. Letting $T_b = T'_b \cup \bigcup_{j=1}^p i_j$, we reach the desired conclusion about excluded face label indices. Again applying the resonance property, we have edge-labels $[i_1,i_2],[i_2,i_3],\ldots, [i_1,i_p]$ appearing in a clockwise order about $b$ such that $i_1 < i_2 < \cdots < i_p$. One can observe that the face between edges $[i_{\ell-1},i_\ell]$ and $[i_\ell,i_{\ell+1}]$ is the face with excluded face label index $i_\ell$ for all $\ell \in [p-1]$ and the face between edges $[i_{p-1},i_p]$ and $[i_1,i_p]$ is the face with excluded face label index $i_p$. Traversing faces of $\mathbf{g}_b$ clockwise about $b$ yields a sequence $i_1 < i_2 < \cdots < i_p$ where each $i_\ell$ is a excluded face label index unique to a single face incident to $b$. Setting $j_\ell = i_{p-\ell +1}$, we have the desired decreasing sequence $j_1 > j_2 > \cdots > j_r$ such that $g_\ell = T_b \setminus \{j_\ell\}$ for all $g_\ell \in \mathbf{g}_b$ and $g_{\ell+1}$ is adjacent and counterclockwise from $g_\ell$ about $b$ for all $\ell \in [p]$.
\end{proof}

By \Cref{lem:common_face_indexing}, for any fixed internal vertex $v \in V(G)$ we can uniquely identify a face $f$ incident to $v$ by either an included face label index if $v \in W$ or an excluded face label index if $v \in B$. Further, these included of excluded face label indices come with an ordering under a cyclic orientation about $v$, namely, the face label indices are decreasing when traversing faces counterclockwise about $v$. We codify this property in the following definition. 

\begin{definition}[Almost monotonicity]
\label{def:almost_monotonicity}
For white vertices, let $w \in W$ and $\mathbf{f}_w = (f_{1'},f_{2'},\ldots, f_{p'})$ be the collection of faces incident to $w$ such that $f_{i'}$ is adjacent to $f_{(i-1)'}$ and $f_{(i+1)'}$ for all $i'$ and where increasing in $i'$ corresponds to traversing faces of $\mathbf{f}_w$ in a clockwise or counterclockwise orientation about $w$. We say a function $\varphi_w: \mathbf{f}_w \to \RR$ is \textbf{almost monotonically decreasing at $w$} if we can choose $f_1 \in \mathbf{f}_w$ in such a way that \[\varphi_w(f_1) > \varphi_w(f_2) > \cdots >\varphi_w(f_p), \qquad \text{and} \qquad \varphi_w(f_p) < \varphi_w(f_1).\]

Analogously for internal black vertices, let $b \in B$ and $\mathbf{g}_b = (g_{1'},g_{2'},\ldots, g_{p'})$ be the collection of faces incident to $b$ such that $g_{i'}$ is adjacent to $g_{(i-1)'}$ and $g_{(i+1)'}$ for all $i'$ and where increasing in $i'$ corresponds to traversing faces of $\mathbf{g}_b$ in a clockwise or counterclockwise orientation about $b$. We say a function $\varphi_b: \mathbf{g}_b \to \RR$ is \textbf{almost monotonically decreasing at $b$} if we can choose $g_1 \in \mathbf{g}_b$ in such a way that \[\varphi_b(g_1) > \varphi_b(g_2) > \cdots >\varphi_b(g_p), \qquad \text{and} \qquad \varphi_b(g_p) < \varphi_b(g_1).\]
\end{definition}

In the special case when we have some globally defined function $\varphi:F(G) \to \RR$ for which the restrictions $\varphi|_{\mathbf{f}_w}:\mathbf{f}_w \to \RR$ are almost monotonically decreasing for all $w \in W$, or equivalently $\varphi|_{\mathbf{g}_b}:\mathbf{g}_b \to \RR$ are almost monotonically decreasing for all internal $b \in B$, then $\varphi$ will be a \textit{height function} (see \Cref{def:gen_heights}). Height functions are our main object of study in \Cref{sec:heights}. For simplicity, we typically choose to define almost monotonically decreasing functions with respect to each $w \in W$ rather than internal $b \in B$, however, the following example illustrates how \Cref{lem:common_face_indexing} yields almost monotonically decreasing functions which will serve as prototypes for the height functions introduced in \Cref{prop:fund_heights}.

\begin{example}
    \label{ex:almost_monotone_functs}
Assume the notation of \Cref{lem:common_face_indexing} and consider the counterclockwise cyclic orientation of faces about any $w\in W$. For each $w\in W$ we define the local function $\varphi_w:\mathbf{f}_w \to [n]$ given by $\varphi_w(f_i) = j_i$. As a consequence of \Cref{lem:common_face_indexing}, then $\varphi_w$ is almost monotonically decreasing at all $w \in W$. 

Alternatively, we consider the counterclockwise cyclic orientation of faces about any internal $b \in B$. For each internal $b \in B$ we define the local function $\psi_b:\mathbf{g}_b \to [n]$ given by $\psi_b(g_i) = \ell_i$. As a consequence of \Cref{lem:common_face_indexing}, then $\psi_b$ is almost monotonically decreasing at all internal $b \in B$.
\end{example}

\section{Matchings and Poset of Matchings}
\label{sec:matchings}

In this section, we introduce the combinatorial tool of \textit{almost perfect matchings} on plabic graphs $G$. We can organize almost perfect matchings on $G$ by their \textit{boundary conditions}, that is, the matched edges of $G$ which include some $k$-subset of boundary vertices of $G$. In our study of matchings on $G$, we explore two different applications. In \Cref{ssec:boundary_meas_map}, we explicitly parameterizate any positroid cell $\Pi_\mathcal{P}$ by (move equivalence classes of) $G$ via Postnikov's boundary measurement map {\cite[Theorem 12.7]{Post}} (see \Cref{thm:boundary_meas_map}). From this parameterization of $\Pi_\mathcal{P}$, we can equivalently characterize $\mathcal{P}$ as the collection of all $k$-subset boundary conditions of matchings possible on $G$. Alternatively, in \Cref{ssec:lattice_dimer} we study the lattice structures of \textit{perfect} matchings on arbitrary planar bipartite graphs $H$ of Propp {\cite[Theorem 2]{Propp}} and of almost perfect matchings on $G$ with boundary $I \in \mathcal{P}$ of Muller--Speyer {\cite[Theorem B.1]{MSTwist}}. We also discuss Muller--Speyer's construction of extremal matchings on $G$ with certain boundaries $I \in \mathcal{P}$, of which the present paper completes.

\subsection{The boundary measurement map}
\label{ssec:boundary_meas_map}

We begin in defining 1-dimers for arbitrary planar bipartite graphs, although this subsection will focus on the case of plabic $G$. 

\begin{definition}[1-Dimer]
\label{def:dimer_model}
Given a planar bipartite graph $H$ (not necessarily embedded in a disk), a \textbf{1-dimer}, or single dimer, $M$, is a collection of edges $M = \{e_i\} \subseteq E(G)$ such that each vertex $v \in V(G)$ is incident to exactly one edge $e_i \in M$.
\end{definition}

For clarity, when referring to an arbitrary planar bipartite graph $H$, we let $B(H)$ and $W(H)$ denote the black and white vertex sets of $H$, respectively. In the case when $|B(H)| = |W(H)|$, then a 1-dimer is also called a \textit{perfect matching}. We let $\mathcal{M}$ denote the set  of all perfect matchings on $H$. In our setting of plabic graphs $G$ of type $(k,n)$, we have an exceedance of black vertices to white vertices of $G$, namely, $|B| - |W| = n-k$. Therefore, a 1-dimer on $G$ is called an \textit{almost perfect matching}, as not all $b \in B$ will be incident to an edge $e_i \in M$. In particular, we specifically choose the $(n-k)$ unmatched black vertices of $G$ to be chosen from the $n$ black boundary vertices of $G$. Equivalently, this means any almost perfect matching $M$ on $G$ comes with a list, $I$, of $k$ boundary vertices of $G$ which are matched and we refer to $I$ as the \textbf{boundary condition} of $M$ and write $\partial M = I$. 

For $I \in \dbinom{[n]}{k}$, let $\mathcal{M}_I$ be the set of all almost perfect matchings on $G$ with boundary condition $I$. Note that we can also characterize any almost perfect matching $M$ on $G$ by the amendment that for each \textit{internal} $v \in V(G)$, $v$ is incident to exactly one edge $e_i \in M$ in \Cref{def:dimer_model}. We proceed in calling any 1-dimer on arbitrary $H$ or plabic $G$ as a matching, with adjectives perfect and almost perfect implied by context. 

We now give the explicit parameterization of $\Pi_\mathcal{P}$ by its corresponding plabic graph $G$ via the \textit{boundary measurement map}. To each $e \in E(G)$, we assign a real, positive edge weighting $\wt: E(G) \to \RR_{>0}$. Then for a matching $M$, we give the edge weight of $M$, $\wt_e(M)$, to be the product of edge weights $\wt(e_i)$ over all $e_i \in M$, that is, $\wt_e(M) = \ProdBlank{e_i\in M}\wt(e_i)$. We fix a boundary condition $I$ and obtain the partition function, $\Delta_I(G)$, as \[ \Delta_I(G) = \SumBlank{M \in \mathcal{M}_I}\wt_e(M) = \SumBlank{M \in \mathcal{M}_I}\ProdBlank{e_i \in M}\wt(e_i).\] Then the parameterization is as follows, where the formulation is that found in $\cite{Lam,GrassmannLam}$.

\begin{theorem}[{\cite[Theorem 12.7]{Post}}{\cite[Proposition 2.8]{PSW}}{\cite[Theorem 2.1]{Lam}}{\cite[Theorem 4.1]{GrassmannLam}}]
\label{thm:boundary_meas_map}
Let $\wt: E(G) \to \RR_{>0}$ be an edge weighting on $G$. Then the collection of coordinates $(\Delta_I(G))_{I \in \binom{[n]}{k}}$, where $\Delta_I(G) = \SumBlank{M \in \mathcal{M}_I}\wt_e(M)$, defines a unique point $\tilde{X} \in \affgrass_{\ge 0}$ such that $\Delta_I(G) = \Delta_I(\tilde{X})$ for all $I$ with respect to $\wt$. Alternatively, each $\tilde{X} \in \affgrass_{\ge 0}$ is realizable as some plabic graph $G$.
\end{theorem}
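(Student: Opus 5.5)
The plan is to realize the matching partition functions as the maximal minors of an explicit $k\times n$ matrix. The route goes through perfect orientations, as in \cite{PSW,Talaska}, rather than checking Pl\"ucker relations directly. First, fix a reference almost perfect matching $M_0$ on $G$ with boundary condition $I_0\in\mathcal{P}$; for instance, take $I_0=\gneckset{1}$, which is a boundary condition by the discussion following \Cref{thm:fundamental_thm_reduced_plabic_graphs}. Orient every edge of $M_0$ from its black endpoint to its white endpoint, and every other edge from white to black. Each internal white vertex then has exactly one incoming edge and each internal black vertex exactly one outgoing edge, so this is a perfect orientation $\mathcal{O}$ with source set $I_0$.

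The key combinatorial step is the bijection of \cite{PSW} between $\mathcal{M}_I$ and conservative flows in $\mathcal{O}$ from $I_0\setminus I$ to $I\setminus I_0$. Given $M\in\mathcal{M}_I$, the symmetric difference $M\,\triangle\,M_0$ is a disjoint union of alternating cycles and alternating paths between boundary vertices of $I_0\triangle I$. By construction of $\mathcal{O}$, these are exactly directed cycles and directed paths from $I_0\setminus I$ to $I\setminus I_0$. The converse direction consists of toggling $M_0$ along a flow.

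For weights, set the weight of an edge $e$ of $\mathcal{O}$ to $\wt(e)$ if $e\notin M_0$ and $\wt(e)^{-1}$ if $e\in M_0$. Then
\[\wt_e(M)=\wt_e(M_0)\cdot\prod_{e\in M\triangle M_0}\wt_{\mathcal{O}}(e),\]
so $\Delta_I(G)=\wt_e(M_0)\cdot F_I$, where $F_I$ is the flow polynomial.

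Next I would invoke Talaska's theorem \cite{Talaska}. It states that the boundary measurement matrix $A$ of $\mathcal{O}$ satisfies $\Delta_I(A)=F_I/\sum_{C}\wt(C)$. Here $A$ has rows indexed by $I_0$, the identity in the columns $I_0$, and signed sums over directed paths (counted with cycles via the formal geometric series) elsewhere. The denominator is the sum over conservative cycle collections, hence positive. So the vector $(\Delta_I(G))_I$ is a positive scalar multiple of the Pl\"ucker vector of $A$, which lies on $\widehat{\mathrm{Gr}}(k,n)$. Rescaling one row of $A$ absorbs the scalar and produces $\tilde X$. Nonnegativity of all $\Delta_I(\tilde X)$ is automatic because the weights are positive. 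Uniqueness of $\tilde X$ in the affine cone is immediate, since a point of $\widehat{\mathrm{Gr}}(k,n)$ is determined by its Pl\"ucker coordinates. This proves the first claim.

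I expect the main obstacle to be the sign bookkeeping in Talaska's formula. The planar embedding forces the sign $(-1)^{\#\text{sources strictly between}}$ in the entries of $A$. One must verify that, with this sign convention, every nonzero term in the expansion of a minor has positive sign, so that non-crossing path families survive and crossing or cycle-containing contributions cancel by a Lindstr\"om--Gessel--Viennot involution adapted to cycles. If I wanted an independent check, I would verify the three-term relation displayed in \Cref{sec:positroid_combo} directly by a double-dimer superposition. Take $M\in\mathcal{M}_{J\cup\{a,c\}}$ and $M'\in\mathcal{M}_{J\cup\{b,d\}}$. The paths of $M\cup M'$ must join $\{a,c\}$ to $\{b,d\}$, since even-length alternating paths start and end in different matchings, and planarity forces the pairing $(a,b)(c,d)$ or $(a,d)(b,c)$. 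Swapping along one path then gives weight-preserving bijections onto the two right-hand products. Since three-term relations alone do not cut out the Grassmannian, I would use this only as a sanity check, with the flow argument as the actual proof.

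For the converse ("each $\tilde X$ is realizable"), I would use the positroid stratification. Given $\tilde X$ in a cell $\Pi_{\mathcal{P}}$, choose a reduced plabic graph for $\tilde\pi[\gneck]$, for example Postnikov's Le-diagram graph. For that graph, the map from edge weights modulo gauge to $\Pi_{\mathcal{P}}$ is Postnikov's explicit parametrization: it is a bijection from $\RR_{>0}^{\dim}$ onto the cell \cite[Theorem 12.7]{Post}. The overall positive scale in the affine cone is absorbed by multiplying all weights at one boundary edge.
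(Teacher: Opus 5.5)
Your argument is correct in substance and follows essentially the route the paper defers to. The paper gives no proof of its own, only the citations and the perfect-orientation description of $\tilde X$. Your route is the Postnikov--Speyer--Williams bijection between $\mathcal{M}_I$ and conservative flows in the orientation induced by $M_0$, combined with Talaska's flow formula for the minors of the boundary measurement matrix. Your convention (matched edges black-to-white, $I_0$ the source set) is Postnikov's and mirrors the paper's sink-set convention. The sign bookkeeping you flag as the main obstacle is exactly what Talaska's theorem asserts, so nothing more needs checking once you cite it.

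Two steps need small repairs.

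First, the choice $I_0=\gneckset{1}$ is not supported by the discussion after \Cref{thm:fundamental_thm_reduced_plabic_graphs}, which only identifies trip permutations with necklace permutations. Inferring it from $\gneckset{1}\in\mathcal{P}$ would be circular, because the paper derives $\mathcal{P}=\{I : \mathcal{M}_I\neq\emptyset\}$ from this very theorem. What you actually need is that some almost perfect matching exists on reduced $G$; this is also where reducedness, or at least perfect orientability, enters. For example, the Muller--Speyer matching $\overset{\leftarrow}{M}(f)$ at the target-labeled boundary face $f=\gneckset{1}$ supplies one.

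Second, in the converse, rescaling the weight of the edge at a boundary vertex $i$ multiplies only those $\Delta_I(G)$ with $i\in I$, so it does not absorb a global positive scalar. Instead, multiply the weights of all edges at one internal vertex. Every almost perfect matching uses exactly one of those edges, so every $\wt_e(M)$ is scaled by the same factor.

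With these fixes, your proportionality $(\Delta_I(G))_I = c\,(\Delta_I(A))_I$ with $c>0$, together with Postnikov's surjectivity onto the cell, gives the full statement.
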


\begin{figure}[h]
    \centering
    \includegraphics[width=1.5in]{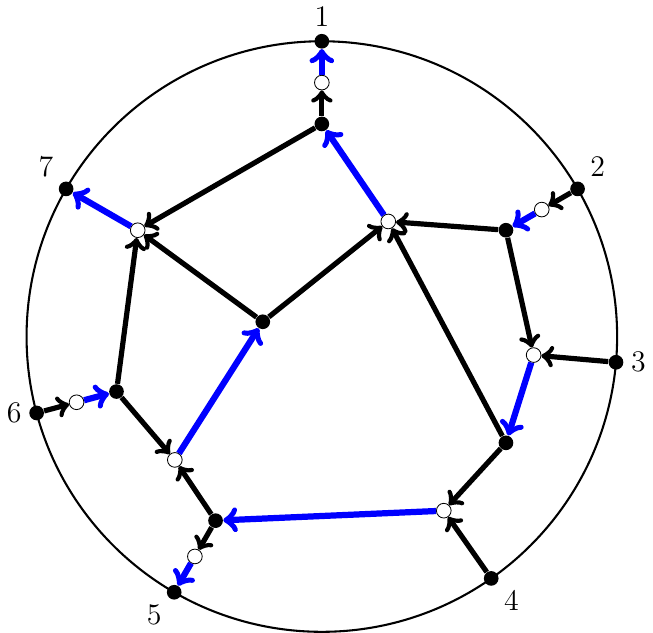} \hspace{.5em} \includegraphics[width=1.5in]{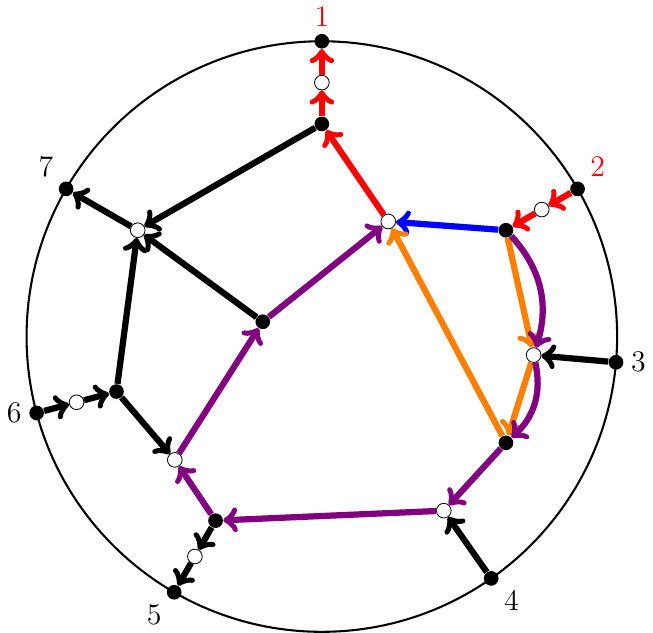} \hspace{.5em} \includegraphics[width=1.5in]{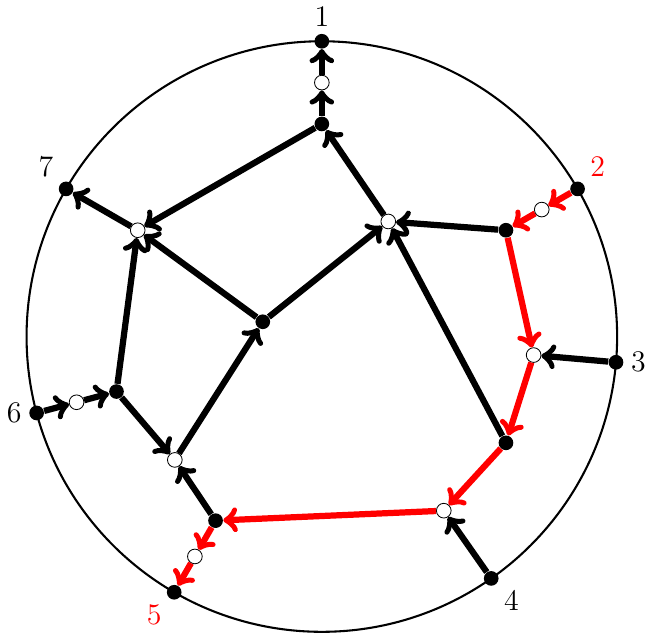} \hspace{.5em} \includegraphics[width=1.5in]{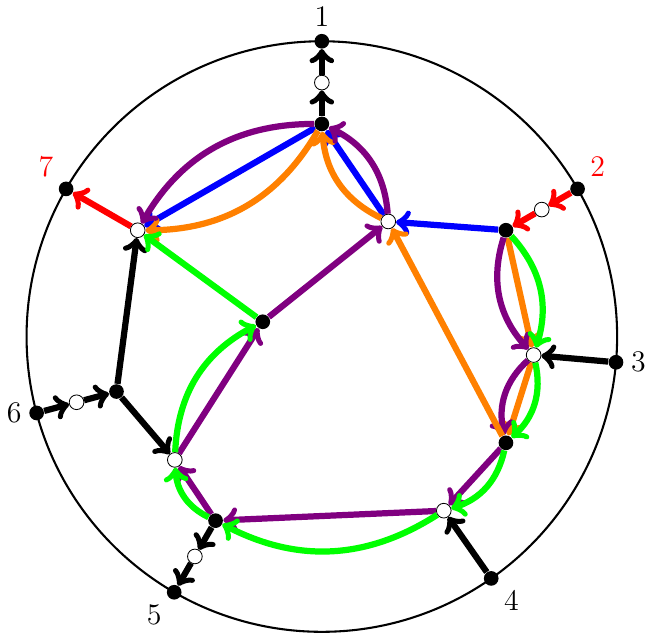}
    \caption{(Left): The perfect orientation with sink set $I_\mathcal{O} = \{157\}$ and corresponding matched edges in {\color{blue}}; (Remaining left-to-right): The paths from source vertex $2$ to sinks $1$, $5$, and $7$, respectively. Red arrows are those used in every path from $2$ to a sink vertex.}
    \label{fig:perforientations}
\end{figure}

We can explicitly define the image $\tilde{X} \in \affgrass_{\ge 0}$ under the boundary measurement map using the language of \textit{perfect orientations} of $G$ due to {\cite[Definition 2.6]{Talaska}} and appearing also in \cite{PSW,KO}. On $G$, a \textbf{perfect orientation} of $G$, $\mathcal{O}_G$ is an assignment of orientations to each $e \in E(G)$ satisfying that every internal $b \in B$ is incident to exactly one edge oriented towards $b$ and every $w \in W$ is incident to exactly one edge oriented away from $w$. Given the condition on white vertices, then there is some $I \in \dbinom{[n]}{k}$ such that the boundary vertex $b_i$ for each $i \in I$ is a \textit{sink} of $\mathcal{O}_G$ and we denote $I_\mathcal{O}$ as the set of sinks of $\mathcal{O}_G$. In particular, perfect orientations of $G$ are in bijection with matchings on $G$ where given a matching $M$, each $e \in M$ is the edge incident to some internal $b \in B$ and $w \in W$ such that $e$ is oriented from $w$ to $b$ and for any $e' \not \in M$, then $e' = (b',w')$ is oriented from $b'$ to $w'$. We fix a perfect orientation $\mathcal{O}_G = \mathcal{O}$ with set of sinks $I_\mathcal{O} = I$. For any directed path $P:j \to i$ from boundary vertex $j$ to boundary sink $i$ on $G$, we define the weight of $P$, $\wt_e(P)$, to be the product of edge weights along $P$ under the edge weighting $\wt$. If $i \in I$, then $\wt_e(P) = 1$ if and only if $P: i \to i$ and $\wt_e(P) = 0$ otherwise. We let $w(P)$ denote the winding number of $P$ (allowing that $P$ may contain a directed cycle) and $N_{i,j}$ the number of sinks $\ell \in I$ on the boundary of $G$ such that $i <_i \ell <_i j$. Then we define $\tilde{X} = (x_{i,j})$ as follows
\[
x_{i,j} = (-1)^{N_{i,j}} \SumBlank{P:j \to i} (-1)^{w(P)}\wt_e(P),
\]
where it is sufficient to index rows of $\tilde{X}$ by $i \in I$ as we cannot find a directed path which terminates at a source boundary vertex. This matrix $\tilde{X}$ we call the \textit{boundary measurement matrix} of $G$.

Then following from {\cite[Theorem 12.7, Corollary 16.5]{Post}}, {\cite[Proposition 2.8]{PSW}}, and {\cite[Theorem 3.3]{MSTwist}}, for $G$ corresponding to some $\Pi_\mathcal{P}$, then for the corresponding boundary measurement matrix of $G$, say $\tilde{X}$, we have $\tilde{X} \in \Pi_\mathcal{P}$. Moreover, \Cref{thm:boundary_meas_map} gives that the non-vanishing Pl\"ucker coordinates of $\tilde{X}$ are indexed by exactly the set of possible boundary conditions for matchings on $G$ and those Pl\"ucker coordinates which do vanish on $\tilde{X}$ are indexed by $J \in \dbinom{[n]}{k}$ such that $J$ is not a possible boundary condition for a matching on $G$. In particular, this yields that we can characterize $\mathcal{P}$ by matchings on $G$ as \[\mathcal{P} = \left\{ I \in \dbinom{[n]}{k} \, \bigg| \, \mathcal{M}_I \ne \emptyset\right\}.
\]
We conclude this subsection with example of relating the matrix $B$ in \Cref{ex:positroid_example_matrix} with the plabic graph $G_B$ via this boundary measurement map.

\begin{figure}[!b]
\begin{center}
\includegraphics[width =1.8in]{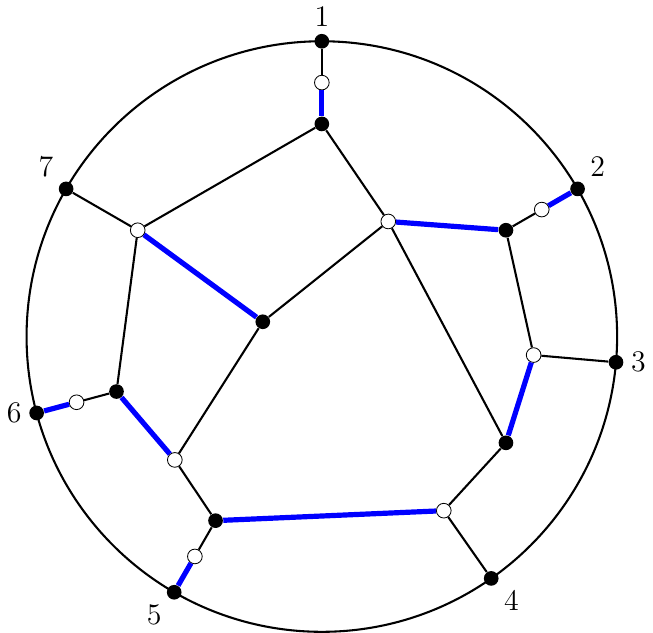} \hspace{2em} \includegraphics[width =1.8in]{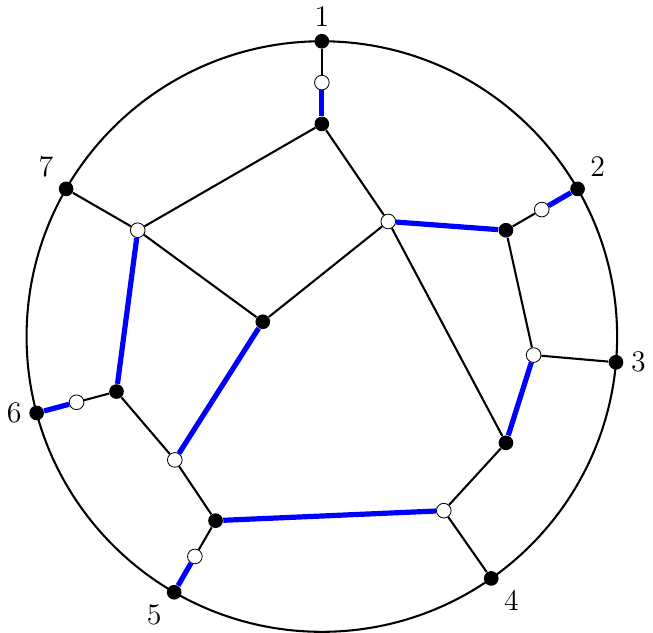} \hspace{2em} \includegraphics[width =1.8in]{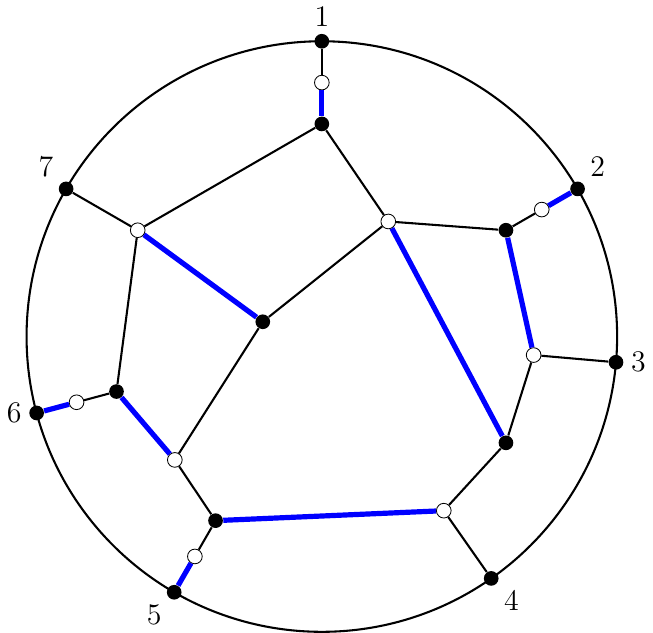}  
\includegraphics[width =1.8in]{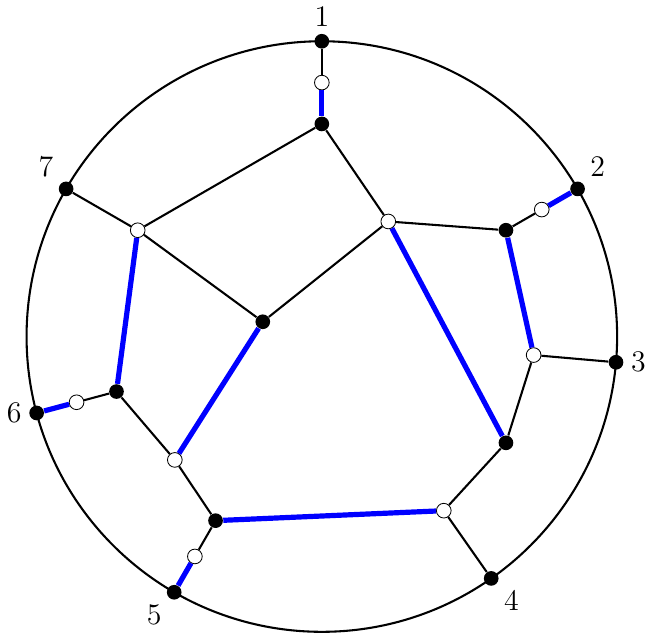} \hspace{2em} \includegraphics[width =1.8in]{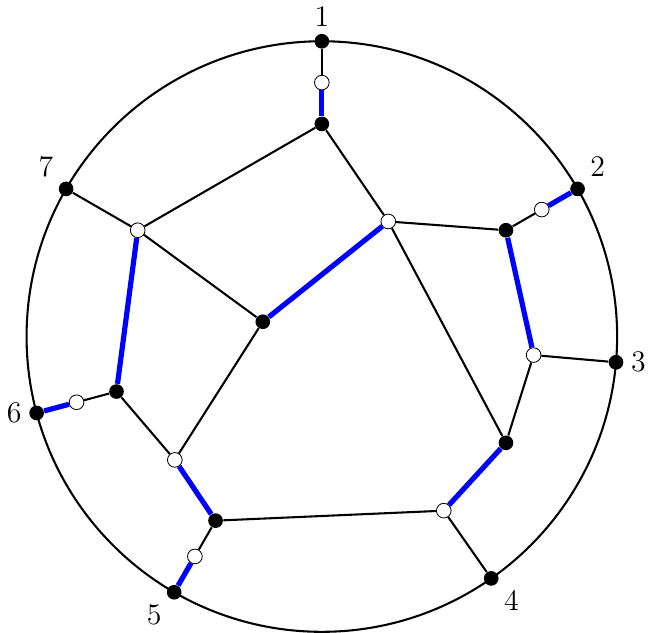}
\caption{All matchings with boundary condition $\{2,5,6\}$ on $G_B$. From top left to bottom right, across rows, label these matchings as $M_1$, $M_2$, $M_3$, $M_4$, and $M_5$.}
\label{fig:256_matchings}
\end{center}
\end{figure}

\begin{example}
    \label{ex:boundary_meas_map} 
One can note that there is a unique matching on $G_B$ with boundary $I = \{157\}$, i.e. there is a unique (acyclic) perfect orientation $\mathcal{O}$ on $G_B$ with sink set $I_\mathcal{O} = \{157\}$ (see \Cref{fig:perforientations}). If one chooses $\wt(e) = 1$ for all $e \in E(G_B)$, then $B$ is exactly the boundary measurement matrix of $G_B$. In particular, \Cref{fig:perforientations} shows all paths from $2$ to each sink vertex in this perfect orientation. From this we construct the second column of $B$, noting that there are three paths from $2$ to $1$ (no sinks between 1 and 2), one path from $2$ to $5$ (two sinks between 5 and 2), and four paths from $2$ to $7$ (one sink between 7 and 2), yielding $B_{1,2} = 3$, $B_{5,2} = 1$, and $B_{7,2}=-4$. 

Applying \Cref{thm:boundary_meas_map}, we consider the set of matchings $\mathcal{M}_{256}$ on $G_B$ (see \Cref{fig:256_matchings}), retaining
the edge weighting $\wt(e) = 1$ for all $e \in E(G_B)$. Then $\SumBlank{M \in \mathcal{M}_{256}}\wt_e(M) = \Delta_{256}(G_B) = \left|\mathcal{M}_{256}\right|$, which equals 
$\Delta_{256}(B) = 
\det \begin{pmatrix}
    3 & 0 & -1 \\
    1 & 1 & 0  \\
    -4 & 0 & 3
\end{pmatrix} =
5.$ More generally, $\Delta_I(B)$ for any $I \in \mathcal{P}(B)$ can be interpreted as this.
\end{example}

\subsection{The lattice structure for dimers}
\label{ssec:lattice_dimer}

We now briefly return to consideration of perfect matchings on planar bipartite graph $H$ to present the lattice structure on matchings due to \cite{Propp} in fullest generality. In particular, we are able to consider $G$ as a special case of $H$. One may observe the following procedure of obtaining new matchings from old matchings on $H$. Given a matching $M$ on $H$ such that $M$ contains exactly half of the edges of some $f \in F(H)$, then we can obtain a new matching $M'$ on $H$ by taking $M'$ to be the edges of $f$ complementary to those contained in $M$ and exactly the remaining edges not of $f$ contained in $M$. In such an instance, we say that $M$ contains all alternating edges of $f$. Further, for $M$ containing all alternating edges of $f$, if we orient these alternating edges clockwise about $f$, then either all alternating edges are from black vertices to white vertices or from white vertices to black vertices in this orientation and we refer to these edges as \textit{black-to-white edges} and \textit{white-to-black edges}, respectively. We encode this in the following definition.

\begin{definition}
\label{def:swivel_matchings}
For a matching $M$ on $H$ such that $M$ contains all alternating edges of some $f \in F(H)$, we say we can \textbf{swivel} $M$ at $f$ to obtain the matching $M'$. Orienting the alternating edges of $f$ in $M$ clockwise about $f$, we say that a swivel is an \textbf{upswivel} if $M$ contains all white-to-black edges of $f$ and $M'$ contains all black-to-white edges of $f$. Similarly, we say that a swivel is a \textbf{downswivel} if $M$ contains all black-to-white edges of $f$ and $M'$ contains all white-to-black edges of $f$. 
\end{definition}

The nomenclature of up- and downswivels in \Cref{def:swivel_matchings} corresponds to the partial ordering introduced by \cite{Propp}. We give this partial ordering of matchings on $H$ in the following.

\begin{definition}[\cite{Propp}]
\label{def:partial_order_on_matchings}
Let $M, M' \in \mathcal{M}$. Then $M \lessdot M'$ if $M'$ is obtained by upswiveling $M$ at some $f \in F(H)$, or equivalently if $M$ is obtained by downswiveling $M'$ at $f \in F(H)$. Generally, we write $M \le M'$ if $M'$ is obtained from $M$ by a sequence of upswivels at faces $f_1, \ldots, f_p$, or equivalently if $M$ is obtained from $M'$ by a sequence of downswivels at faces $f_p, \ldots, f_1$. 
\end{definition}

\begin{figure}[h]
\begin{center}
\includegraphics[height=1.1in]{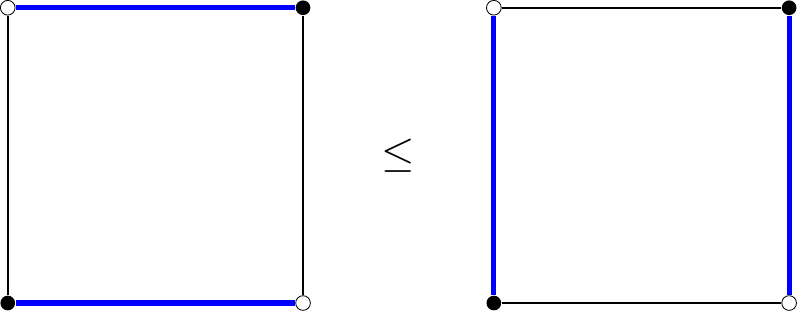} \hspace{2em} \includegraphics[height =1.1in]{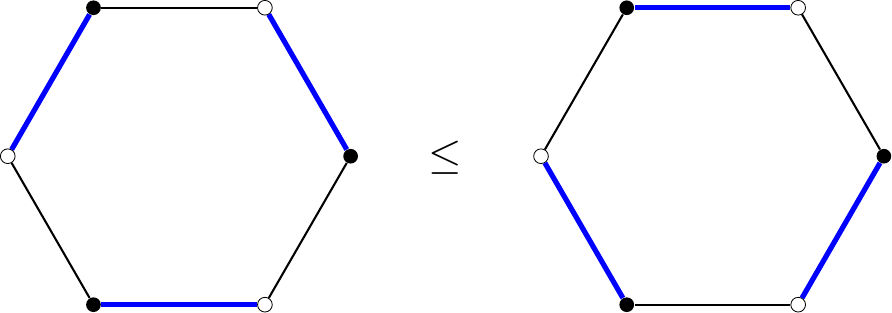}
\caption[Partial Order on Matchings]{(Left): Swiveling (with direction) at a square face; (Right): Swiveling (with direction) at a hexagonal face.}
\label{fig:square_and_hex_swivels}
\end{center}
\end{figure}

\Cref{fig:square_and_hex_swivels} demonstrates this partial ordering on isolated square and hexagonal faces. We now build towards a main result of Propp, giving a lattice structure to the set of matchings on $H$. Suppose $\vec{d} = (d_{v_1}, \ldots, d_{v_r}) \in \ZZ_{\ge 0}^{V(H)}$. A \textit{mixed dimer of type} $\vec{d}$ (a $\vec{d}$-factor in \cite{Propp}), $D_{\vec{d}}$, is a collection of edges (with possibly multiplicity\footnote{In fact, \cite{Propp} considers only a sublattice where no multiple edges occur.  We thank Yucong Lei for this observation.}) $D_{\vec{d}} = \{e\} \subseteq E(H)$ such that each $v_i \in V(H)$ is incident to exactly $d_{v_i}$ edges of $D_{\vec{d}}$, where $d_{v_i}$ is also known as the dimer-degree of $v_i$. Then a 1-dimer is the special case in which $d_{v_i} = 1$ for all $i$. We let $\mathcal{D}_{\vec{d}}$ denote the set of all mixed dimers of type $\vec{d}$. One can observe that given a mixed dimer $D \in \mathcal{D}_{\vec{d}}$ such that we may swivel $D$ at a face $f \in F(H)$ in the sense of \Cref{def:swivel_matchings}, we obtain a (possibly) new mixed dimer $D' \in \mathcal{D}_{\vec{d}}$. We can then assign the same partial ordering on mixed dimers and obtain the following lattice structure.

\begin{theorem}[{\cite[Theorem 2]{Propp}}]
\label{thm:matching_poset_is_fin_dist_lattice}
Under the partial ordering of \Cref{def:partial_order_on_matchings}, $(\mathcal{D}_{\vec{d}}, \le)$ is a finite distributive lattice for nonempty $\mathcal{D}_{\vec{d}}$.
\end{theorem}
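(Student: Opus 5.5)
Since $(\mathcal{D}_{\vec{d}},\le)$ is defined purely through swivels, I would not argue about swivel sequences directly. Instead I would encode each mixed dimer by a height function on faces, show that the partial order becomes the pointwise order on height functions, and show that pointwise max and min preserve the height-function condition. This is Propp's strategy, and the plan has four steps.

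\textbf{Step 1: heights.} Fix a base dimer $D_0\in\mathcal{D}_{\vec{d}}$. For $D\in\mathcal{D}_{\vec{d}}$, consider the flow $\omega_D$ that assigns to each edge $e=(w,b)$ the value $m_D(e)-m_{D_0}(e)$, where $m_D(e)$ is the multiplicity of $e$ in $D$, with $e$ oriented from $w$ to $b$. Every vertex has the same dimer-degree in $D$ and $D_0$, so $\omega_D$ is divergence free. Planarity therefore gives a dual potential $h_D: F(H)\to \ZZ$ with $h_D(f_{\mathrm{out}})=0$ on the outer face. Crossing an edge $e$ with the white endpoint on the left changes $h_D$ by $\omega_D(e)$.

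\textbf{Step 2: swivels are unit moves.} An upswivel at an interior face $f$ changes $\omega_D$ by $\pm1$ around the boundary cycle of $f$, and the sign is determined by the white-to-black versus black-to-white convention. So an upswivel raises $h_D(f)$ by exactly $1$ and leaves every other face unchanged. Consequently $D\le D'$ implies $h_D\le h_{D'}$ pointwise. The map $D\mapsto h_D$ is injective, since $\omega_D$, and hence $D$, is recovered from the differences of $h_D$ across edges. $\mathcal{D}_{\vec{d}}$ is finite because multiplicities are bounded by the degrees $d_v$.

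\textbf{Step 3: characterize the image.} I would show that $h=h_D$ for some $D$ exactly when, for every edge $e$, the difference across $e$ lies in the interval $[-m_{D_0}(e),\,d-m_{D_0}(e)]$, where $d$ is the minimum of the degrees at the endpoints of $e$, and the vertex-sum constraints hold. The intended point is that these are local interval constraints on differences of adjacent faces. Such a set is closed under pointwise $\max$ and $\min$, because $\max(a,b)-\max(a',b')$ lies between $a-a'$ and $b-b'$ in the relevant sense.

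The vertex-sum constraint is automatic once $h$ is a potential. It remains to check that the interval conditions are enough to reconstruct a nonnegative multiplicity function with the right degrees. That function is $m(e)=m_{D_0}(e)+\text{(difference of } h \text{ across } e)$, which has the correct degrees because a potential's differences sum to zero around each vertex. So the image is exactly the set of integer potentials satisfying the edge intervals, and it is closed under $\vee$ and $\wedge$. Pointwise $\max$ and $\min$ distribute, so the image is a distributive lattice under the pointwise order.

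\textbf{Step 4: pointwise order equals the swivel order.} This is the main obstacle. I must show that if $h_D\le h_{D'}$ pointwise with $h_D\neq h_{D'}$, then $D$ admits an upswivel at some face $f$ with $h_D(f)<h_{D'}(f)$. I would pick $f$ maximizing $h_{D'}-h_D$, and among those a face minimizing $h_D$; a local-extremum argument then shows every edge of $f$ sits at the extreme of its interval, so $D$ contains the alternating edges of $f$ in the white-to-black orientation. The outer face is pinned at height $0$, so $f$ is interior. Raising $h_D$ at $f$ keeps the height below $h_{D'}$, and induction on $\sum_f\bigl(h_{D'}(f)-h_D(f)\bigr)$ finishes. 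This makes the swivel order coincide with the pointwise order, so $(\mathcal{D}_{\vec{d}},\le)$ is a finite distributive lattice.
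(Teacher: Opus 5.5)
Your Steps 1--3 are sound. The potential $h_D$, the fact that an upswivel at an interior face raises $h_D$ there by exactly one, and the description of the image by the one-sided constraints $m_D(e)\ge 0$ (which pointwise $\max$ and $\min$ preserve) together show that $\{h_D\}$ is a finite distributive lattice under the pointwise order. The paper gives no argument for this theorem; it only cites Propp. The gap is Step 4, which carries all the content.

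As written, your tie-break does nothing. $h_D$ is normalized against an arbitrary base $D_0$, and if $D_0=D$ then $h_D\equiv 0$ and every face of $S$ ties. Consider an edge $e$ of $f$ that is white-to-black clockwise about $f$ and whose other face $f'$ also lies in $S$. You need $m_D(e)\ge 1$ there. But $h_D(f')\ge h_D(f)$ only gives $m_D(e)=m_{D_0}(e)+h_D(f')-h_D(f)\ge m_{D_0}(e)$, which may be $0$. For instance, the three perfect matchings of the $2\times 3$ grid form a chain $D<V<D'$. For the pair $(D,D')$ both squares lie in $S$, but $D$ can be swiveled at only one of them.

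More seriously, the claim of Step 4 is false without an extra hypothesis. Take two concentric $4$-cycles, $w_1b_1w_2b_2$ inside and $w_3b_3w_4b_4$ outside, joined by the edges $w_1b_3$ and $w_2b_4$.
\begin{itemize}
\item The inner square is balanced and sends edges outward only from its white vertices, so these two edges lie in no perfect matching.
\item Each of the two annular faces has both of them on its boundary, so neither annular face can ever be swiveled.
\item With the outer face pinned, the matching on the outer cycle therefore never changes.
\end{itemize}
Let $D$ have the inner square in its upper state and the outer cycle in the state with smaller heights, and let $D'$ be the reverse. Then $h_{D'}-h_D$ is $0$ on the inner square and $1$ on both annular faces, so $h_D\le h_{D'}$. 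Yet $D$ and $D'$ lie in different components of the swivel graph, so the swivel poset is not even connected. No local-extremum argument can close Step 4 without a hypothesis such as: every edge lies in some element of $\mathcal{D}_{\vec{d}}$. Alternatively, one can first delete the edges used by no element and merge the faces they separate.

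Under that hypothesis your plan goes through once the tie-break is normalized canonically. Let $\bar m$ be the average of $m_{D''}$ over $D''\in\mathcal{D}_{\vec{d}}$, so $\bar m(e)>0$ for every $e$ and $\bar m$ has the correct vertex sums. Let $\tilde h_D$ be the potential of $m_D-\bar m$, and pick $f\in S$ minimizing $\tilde h_D$.
\begin{itemize}
\item If $f'\in S$, then $m_D(e)=\bar m(e)+\tilde h_D(f')-\tilde h_D(f)\ge \bar m(e)>0$, so $m_D(e)\ge 1$.
\item If $f'\notin S$, then $m_D(e)\ge m_{D'}(e)+1$.
\end{itemize}
So $D$ can be upswiveled at $f$, and your induction on $\sum_f\bigl(h_{D'}(f)-h_D(f)\bigr)$ finishes the argument.
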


Considering a plabic graph $G$ parameterizing $\Pi_\mathcal{P}$, the analogous result for $\mathcal{M}_I$ on $G$ is not a direct implication of \Cref{thm:matching_poset_is_fin_dist_lattice}, but rather is due to Muller--Speyer (see Appendix B of \cite{MSTwist}). We nonetheless have the following. 

\begin{theorem}[{\cite[Theorem B.1]{MSTwist}}]
\label{thm:matching_poset_bdy_cond}
Under the partial ordering of \Cref{def:partial_order_on_matchings}, $(\mathcal{M}_{I}, \le)$ is a finite distributive lattice for $I \in \mathcal{P}$.
\end{theorem}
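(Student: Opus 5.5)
I will deduce the statement from Propp's \Cref{thm:matching_poset_is_fin_dist_lattice} by turning the almost perfect matchings with boundary condition $I$ into perfect matchings, or more generally mixed dimers, on an auxiliary planar bipartite graph. The swivel order must be carried over exactly. Fix $I \in \mathcal{P}$, so $\mathcal{M}_I \ne \emptyset$.

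\textbf{Step 1: the auxiliary graph.} Let $H_I$ be the graph obtained from $G$ by deleting every boundary vertex $j \notin I$ together with its unique incident edge. In any $M \in \mathcal{M}_I$ these vertices are unmatched, so their edges never lie in $M$. The boundary vertices in $I$ are always matched, and so is every internal vertex. Hence $\mathcal{M}_I$ is in bijection with the perfect matchings of $H_I$, which is the mixed-dimer set $\mathcal{D}_{\vec{1}}$ on $H_I$. The graph $H_I$ is still planar and bipartite, since we only deleted vertices and edges of the embedded $G$.

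\textbf{Step 2: comparing faces and swivels.} Deleting a boundary edge merges two boundary faces of $G$, or a boundary face with the outer region, into one face of $H_I$. I would then show that the swivel moves on $\mathcal{M}_I$ are exactly the swivels on perfect matchings of $H_I$. Interior faces of $G$ are untouched. A swivel of $G$ at a boundary face is only possible when that face is bounded by a cycle of edges alternately in $M$; this forces the face boundary to avoid the disk boundary. Thus swivels in $G$ occur only at faces whose boundary is a closed cycle of internal edges, and the same holds in $H_I$. The main obstacle is that Propp's theorem is usually stated for finite planar bipartite graphs with a specified outer face, where swiveling at the unbounded face is forbidden or is defined via the outer boundary. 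I must check that the merged boundary regions of $H_I$ carry no genuine alternating cycle that would create extra covering relations. My first attempt is a parity and orientation argument: a region touching the disk boundary contains a boundary vertex $i \in I$ of degree one. Its edge is therefore always matched and cannot lie on an alternating cycle, so no alternating cycle can bound such a region. The up/down orientation convention (clockwise about $f$) is also preserved, because the embedding is unchanged.

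\textbf{Step 3: applying Propp and handling degenerate pieces.} With Steps 1 and 2, \Cref{thm:matching_poset_is_fin_dist_lattice} applied to $H_I$ gives that $(\mathcal{M}_I,\le)$ is a finite distributive lattice; finiteness is automatic. If $H_I$ is disconnected, the lattice is the product of the lattices of its components, and products of finite distributive lattices are again finite distributive. Forced edges, such as lollipops, contribute trivial factors. As an alternative in case Step 2's reduction to Propp's setting fails, I would argue via Thurston--Propp height functions on $H_I$. Such a function is defined relative to a base matching $M_0 \in \mathcal{M}_I$ and is normalized to vanish on one fixed region adjacent to the disk boundary. I would verify that matchings in $\mathcal{M}_I$ correspond to height functions, that pointwise max and min of height functions are again height functions, and that upswivels increase the height at exactly one face by a fixed amount. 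This shows directly that $\mathcal{M}_I$ is a distributive sublattice of $\mathbb{Z}^{F}$.
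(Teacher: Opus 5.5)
Your Step 1 is fine. The boundary regions that worry you in Step 2 are also harmless. Viewed in the plane, all boundary faces of $G$ lie in the single unbounded face of $H_I$, since the disk boundary is not part of the graph. In the disk, a swivel at a boundary face would trade $i$ for $i+1$ in the boundary condition, so it never occurs inside $\mathcal{M}_I$.

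The genuine gap is the version of Propp's theorem you invoke. For an arbitrary finite plane bipartite graph, the perfect matchings do form a distributive lattice under the height-function order. Its covering relations, however, are in general flips of faces of the graph obtained by deleting the edges that are forced or forbidden in every matching. Such a face can be a union of several faces of the original graph, so swivels at single bounded faces need not generate the order. For example:
\begin{itemize}
\item Draw the cube graph as two nested squares, add a black vertex $b_3$ inside the inner square $w_1b_1w_2b_2$ adjacent to $w_1$ and $w_2$, and hang a white leaf $w_3$ on $b_3$.
\item Then $b_3w_3$ is forced, so $w_1b_3$ and $b_3w_2$ are never used.
\item Every alternating half of either face cut out by the chord contains one of these two edges. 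So, swiveling only at bounded faces, neither face can ever be swiveled and their heights never change.
\item Hence the two matchings that differ exactly on the inner square are incomparable, and the swivel poset is not even connected.
\end{itemize}
Your Steps 1--3, including the fallback, use nothing about $G$ beyond $H_I$ being plane bipartite, so they cannot suffice. For a fixed $I$, $H_I$ typically has many edges unused by $\mathcal{M}_I$. You must show that no union of internal faces of $G$ glued along such edges is ever alternating.

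Your fallback has the same hole. It shows the pointwise height order is a distributive lattice and that swivels are monotone for it. It does not show the converse: that $h_M\ge h_{M'}$ with $h_M\ne h_{M'}$ forces a single-face swivel. Heights taken relative to a base matching $M_0$ do not jump across edges where $M$ and $M_0$ agree, which is exactly how the example defeats them. Note that the paper itself does not deduce the statement from \Cref{thm:matching_poset_is_fin_dist_lattice}; it cites Muller--Speyer.

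The missing ingredient is a height function with a nonzero jump across every edge, which is where reducedness enters; the tight heights of \Cref{def:tight_2} are one choice.
\begin{itemize}
\item For $M,M'\in\mathcal{M}_I$, normalize $h_M,h_{M'}$ to agree on the boundary faces. This is possible because the boundary edges have the same status in both. Then $h_M-h_{M'}\in n\ZZ$ and it vanishes on boundary faces.
\item Suppose $h_M\ge h_{M'}$ and $h_M\ne h_{M'}$. Let $S$ be the set of faces where $h_M-h_{M'}$ is maximal, and let $f\in S$ maximize $h_M$ on $S$.
\item Neighbours of $f$ in $S$ are strictly lower, because jumps are nonzero.
\item Neighbours outside $S$ are strictly lower too. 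Across each edge the two admissible jumps have opposite signs, and $h_M$ realizes the larger one.
\item Almost monotonicity at each vertex of $f$ then forces exactly one of the two edges of $\partial f$ at that vertex to be matched, with consistent orientation. So $M$ can be downswiveled at $f$.
\item The result is still $\ge M'$, since $h_M(f)-h_{M'}(f)\ge n$.
\end{itemize}
This identifies the swivel order with the height order. Closure of height functions under pointwise max and min then gives the finite distributive lattice.
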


An example of this lattice structure is shown for $(\mathcal{M}_{256},\le)$ on $G_B$ in \Cref{fig:256_poset}. We obtain from \Cref{thm:matching_poset_bdy_cond} that for nonempty $\mathcal{M}_I$, there is a unique minimal and unique maximal matching we denote by $M_-$ and $M_+$, respectively. We characterize these extremal matchings in the following. 

\begin{figure}[ht]
\begin{center}
\includegraphics[width=3.6in]{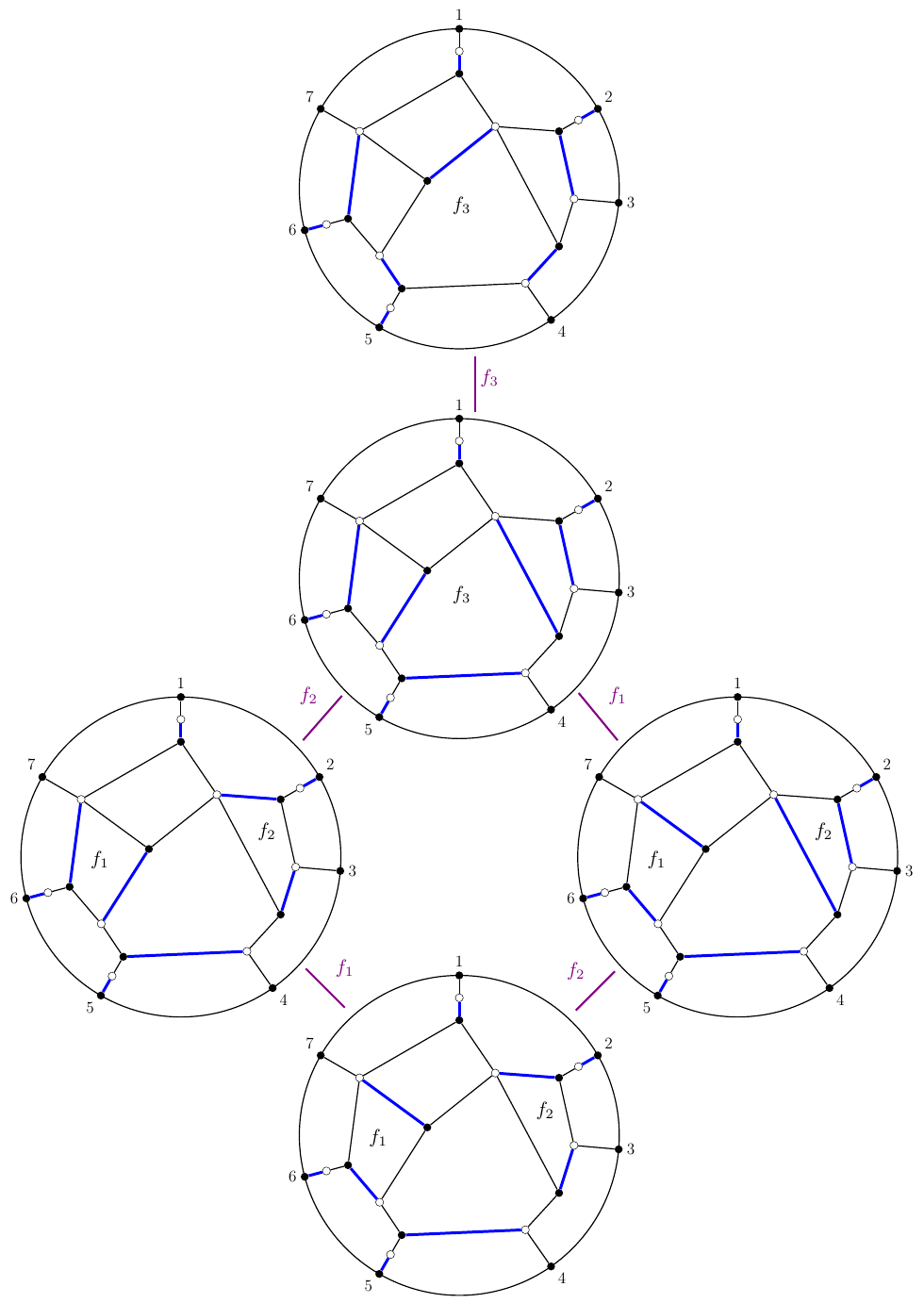}
\caption[Poset of Matchings]{The poset $(\mathcal{M}_{256},\le)$ on the matchings given in \Cref{fig:256_matchings}. The covering relations indicate the swiveled face.}
\label{fig:256_poset}
\end{center}
\end{figure}

\begin{definition}
\label{def:min_match}
Given $I \in \mathcal{P}$, $M_- \in \mathcal{M}_I$ is the \textbf{minimal matching} with boundary condition $I$ if for any $f \in F(G)$ for which $M_-$ can be swiveled, $M_-$ may only be upswiveled. Analogously, $M_+ \in \mathcal{M}_I$ is the \textbf{maximal matching} with boundary condition $I$ if for any $f\in F(G)$ for which $M_+$ can be swiveled, $M_+$ may only be downswiveled.
\end{definition}

We now give the constructions of $M_-$ and $M_+$ due to Muller--Speyer in terms of \textit{downstream wedges} and \textit{upstream wedges}, respectively, using the strands of $G$. Let $\tau$ and $\delta$ be strands of $G$ which cross at $e \in E(G)$. We say that $f \in F(G)$ is \textbf{downstream} from $e$ if $f$ lies in the region of $G$ between $\tau$ and $\delta$ \textit{after} their crossing at $e$. Similarly, we say $f \in F(G)$ is \textbf{upstream} from $e$ if $f$ lies in the region of $G$ between $\tau$ and $\delta$ \textit{before} their crossing at $e$. Then we define the \textbf{downstream wedge} of $e$ as the collection $\{f \in F(G)\, | \, f \text{ is downstream from } e\}$ and the \textbf{upstream wedge} of $e$ as the collection $\{f \in F(G)\, | \, f \text{ is downstream from } e\}$. Then for any $f \in F(G)$, Muller--Speyer define the following matchings {\cite[Theorem 5.3]{MSTwist}}: 
\[\overset{\rightarrow}{M}(f) = \{e \in E(G)\,|\, f \text{ is in the downstream wedge of }e\}\] 
and 
\[\overset{\leftarrow}{M}(f) = \{e \in E(G)\,|\, f \text{ is in the upstream wedge of }e\}.\]  
In particular, when $f \in \cev{F}(G)$, i.e. $G$ is source-labeled, then $\overset{\rightarrow}{M}(f)$ is \textit{minimal} with boundary condition read from the face label of $f$ {\cite[Proposition B.6]{MSTwist}}. Analogously, when $f \in \vec{F}(G)$, i.e. $G$ is target-labeled, then $\overset{\leftarrow}{M}(f)$ is \textit{maximal} with boundary condition read from the face label of $f$. 

\begin{remark}
\label{rem:extension}
These constructions in Muller-Speyer \cite{MSTwist} of extremal matchings $M_-$ and $M_+$ are limited to those whose boundary conditions must appear as a face label in either the source- or target-labeling of $G$, despite there typically existing several more boundary conditions of matchings possible on $G$. In particular, for a reduced plabic graph $G$ with decorated permutation $\tilde{\pi}$, there are only $k(n-k) - \ell(\tilde{\pi})+1$ such faces {\cite[Theorem 6.8]{OPS}}, where $\ell(\tilde{\pi})$ is the alignment number as in \Cref{def:alignments}. In the current work, we extend this result for all $I \in \mathcal{P}$, that is, for all possible boundary conditions of matchings on $G$ in \Cref{sec:heights}, of which there are possibly $\binom{n}{k}$.
\end{remark}

\section{Height Functions}
\label{sec:heights}

We build a combinatorial framework for matchings on plabic graphs $G$ via \textit{height functions}. Originally introduced in \cite{Thurston}, height functions provide a height assignment to $F(G)$ which leads to a simple bijection with matchings (up to small perturbations) as exploited in \cite{Propp}. In \Cref{ssec:heights}, we define a novel family of height functions called \textit{fundamental height functions} and methods for combining these fundamental heights to obtain new height functions, namely \textit{translated maximums} and \textit{translated minimums}. We conclude this first subsection in showing that translated maximums and translated minimums of fundamental heights gives minimal and maximal matchings, respectively. In \Cref{ssec:extreme_match}, we choose face-labeling conventions for $G$ so that we can obtain boundary conditions for our constructed extremal matchings. 
Moreover, we use the combinatorics of positroids $\mathcal{P}$ to obtain all possible boundary conditions for extremal matchings on $G$ parameterizing the associated cell $\Pi_\mathcal{P}$ from the construction of \Cref{ssec:heights}.

\subsection{Extremal matchings from height functions}
\label{ssec:heights}

We begin with a definition analogous to that given in {\cite[Definition 5]{Propp}}. In particular, Propp defined height functions to record probabalistic data of so-called $c$-orientations (circulations) on graph structures which are dual to our current setting.

\begin{definition}
\label{def:gen_heights}
    Given a planar bipartite graph $H$, let $\varphi:F(H) \to \RR$ be a globally defined function such that for each $v \in V(H)$, the restrictions $\varphi|_{\mathbf{f}_v}$ are almost monotonically decreasing (under a set cyclic orientation about all $v \in V(H)$). Then $\varphi$ is \textbf{height function}. 
\end{definition}

As in the preceding section, we restrict from planar bipartite graphs $H$ to plabic graphs $G$ with only requiring $\varphi$ restricts to almost monotonically decreasing functions at all \textit{internal} $v \in V$ under the set cyclic orientation. In \cite{Propp}, it was shown that every height function $\varphi$ is associated to a matching $M$ on $H$ by including $e \in M$ if $e$ separates faces $f_1, f_p \in F(H)$ such that $\varphi(f_p) > \varphi(f_1)$, where notation here is borrowed from \Cref{def:almost_monotonicity}. In the present work, we take the cyclic orientation of faces \textit{counterclockwise} about any $w \in W(H)$. This is equivalent to taking the cyclic orientation of faces \textit{clockwise} about any $b \in B(H)$. In the case of a plabic graph $G$, we only consider the cyclic orientation of faces about \textit{internal} $b \in B$. Aside from the following definition, we choose to examine the almost monotonicity of a height function at only white vertices. Thus, we define our novel family of height functions.

\begin{proposition}[Fundamental heights]
\label{prop:fund_heights}
Let $f \in F(G)$ have face label $I = \{i_1,\ldots,i_k\}$. Let $\varphi_j:F(G) \to \ZZ_{>0}$ for $0\le j < n$ be the mapping given as \[\varphi_j(f) = \Sumblank{\ell = 1}{k}i_\ell' \text{ where } i_\ell' = \begin{cases}
    i_\ell & \text{if } i_\ell > j \\
    i_\ell + n & \text{if } i_\ell \le j
\end{cases}.\] Then $\varphi_j$ is a height function and the collection $\displaystyle{\{\varphi_j\}}_{j=0}^{n-1}$ we call \textbf{fundamental height functions}. 
\end{proposition}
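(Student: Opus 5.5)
The plan is to verify \Cref{def:gen_heights} locally at each white vertex $w \in W$, using \Cref{lem:common_face_indexing}; the paper already notes it suffices to check white vertices. Fix $w$ and its incident faces $\mathbf{f}_w = \{f_1,\ldots,f_p\}$, indexed as in \Cref{lem:common_face_indexing}, so that $f_\ell = S_w \cup \{j_\ell\}$ with $j_1 > j_2 > \cdots > j_p$ as we traverse counterclockwise about $w$. The key observation is that $\varphi_j$ is additive over the elements of a face label: it is the sum of the values $\sigma_j(i)$, where $\sigma_j(i) = i$ if $i > j$ and $\sigma_j(i) = i+n$ if $i \le j$. Hence
\[\varphi_j(f_\ell) = \SumBlank{s \in S_w}\sigma_j(s) + \sigma_j(j_\ell),\]
and the first term is a constant $C_w$ independent of $\ell$. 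So the restriction of $\varphi_j$ to $\mathbf{f}_w$ is a translate of the function $f_\ell \mapsto \sigma_j(j_\ell)$, and it suffices to show that this function is almost monotonically decreasing in the counterclockwise order.

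The function $\sigma_j$ is injective on $[n]$ and is order-preserving for the $(j+1)$-ordering: it maps $j+1, j+2, \ldots, n, 1, \ldots, j$ to $j+1, \ldots, n, n+1, \ldots, n+j$, a strictly increasing sequence. The cyclic sequence $j_1 > j_2 > \cdots > j_p$ (read cyclically, with a single ``ascent'' from $j_p$ back to $j_1$) is therefore also cyclically decreasing with respect to $<_{j+1}$, with exactly one ascent. The ascent may now sit at a different place: it occurs between $j_r$ and $j_{r+1}$, where $j_r \le j < j_{r+1}$, provided such an $r$ exists. Concretely, let $r$ be the index at which the values cross the threshold $j$, so that $j_1 > \cdots > j_r > j \ge j_{r+1} > \cdots > j_p$, with one of the two blocks possibly empty. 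Start the cyclic listing at $f_{r+1}$. The values $\sigma_j(j_{r+1}) > \cdots > \sigma_j(j_p)$ lie in $[n+1, n+j]$ and the values $\sigma_j(j_1) > \cdots > \sigma_j(j_r)$ lie in $[j+1, n]$. Concatenating gives a strictly decreasing sequence along $f_{r+1}, \ldots, f_p, f_1, \ldots, f_r$. The final value is below the first, since $\sigma_j(j_r) \le n < \sigma_j(j_{r+1})$, or trivially so when a block is empty and the original order already works. This is exactly the condition of \Cref{def:almost_monotonicity} with the starting face chosen as $f_{r+1}$.

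Positivity and integrality of $\varphi_j$ are immediate. The main point needing care is the bookkeeping of orientation conventions. The decreasing order in \Cref{lem:common_face_indexing} is counterclockwise about $w$, which matches the convention fixed before this proposition (counterclockwise about white vertices, equivalently clockwise about internal black vertices), so no reversal is needed. One could also check black vertices directly via the $T_b \setminus \{j_\ell\}$ description, where subtracting $\sigma_j(j_\ell)$ reverses the order and matches the clockwise convention. The degenerate cases also need a remark: boundary vertices are excluded, and for white vertices of degree one or two (lollipops, bivalent vertices) the condition is trivial or reduces to the same two-element argument.
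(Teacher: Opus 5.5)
Your proof is correct and is essentially the paper's argument: both use \Cref{lem:common_face_indexing} to write $\varphi_j$ on the faces around a vertex as a constant plus (at white vertices) or minus (at internal black vertices) the image of the distinguished index under a map that is order-preserving for the $(j+1)$-ordering. Your explicit rotation to start at $f_{r+1}$ just spells out what the paper compresses into ``$\varphi_j$ induces the $(j+1)$-ordering,'' though note that your first description of where the ascent sits has the threshold inequality reversed (it should read $j_{r+1} \le j < j_r$), which your ``Concretely'' sentence already states correctly.
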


\begin{proof}
For any $w \in W$, applying \Cref{lem:common_face_indexing} as done in \Cref{ex:almost_monotone_functs} yields that $\varphi_0$ is a height function, i.e $\varphi_0$ restricts to an almost monotonically decreasing function at each $w \in W$. For $j > 0$, note that $\varphi_j$ is inducing the $(j+1)$-ordering on face label indices in $[n]$, so in particular \Cref{lem:common_face_indexing} also yields that $\varphi_j$ is a height function. 

Similarly, for any internal $b \in B$, \Cref{lem:common_face_indexing} yields that $\varphi_0$ is a height function, where now the cyclic orientation on faces is clockwise about $b$ since \textit{excluded} face label indices are almost monotonically decreasing counterclockwise about $b$ (see \Cref{ex:almost_monotone_functs}). Again recognizing that for $j > 0$, then $\varphi_j$ induces the $(j+1)$-ordering on face label indices in $[n]$, so $\varphi_j$ is also a height function as desired. 
\end{proof}

\begin{example}
   \label{ex:fund_heights}
On source-labeled $G_B$, we have the set of all fundamental height functions and their corresponding matchings given in Figure \ref{fig:fund_heights_source}. Similarly, on target-labeled $G_B$, we have the set of all fundamental height functions and their corresponding matchings given in Figure \ref{fig:fund_heights_targ}.

\begin{figure}[ht]
    \centering
    \includegraphics[width=1.5in]{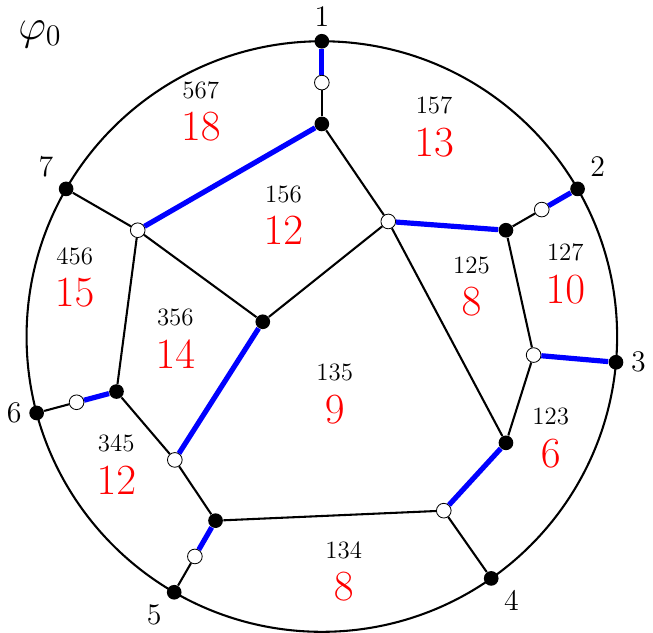} \hspace{.5em} \includegraphics[width=1.5in]{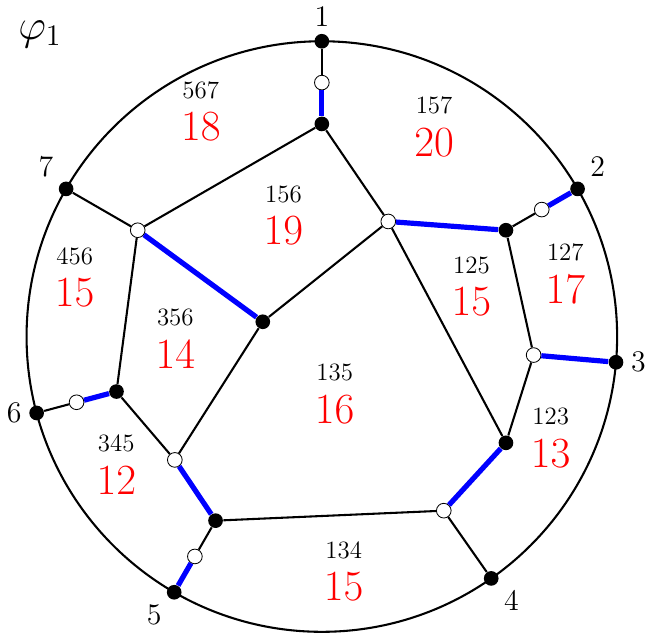} \hspace{.5em} \includegraphics[width=1.5in]{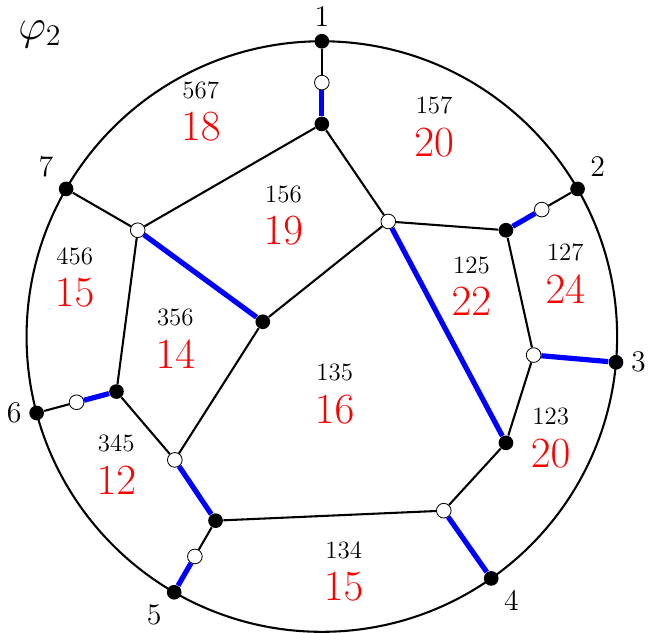} \hspace{.5em} \includegraphics[width=1.5in]{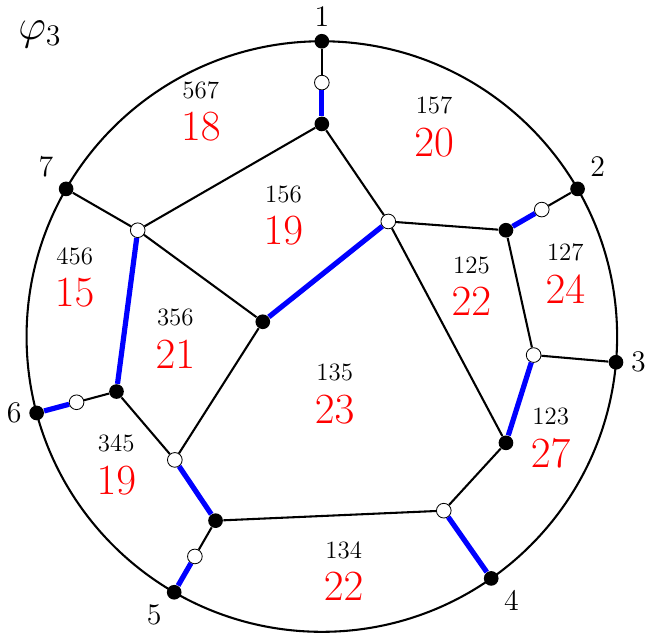} 
    \includegraphics[width=1.5in]{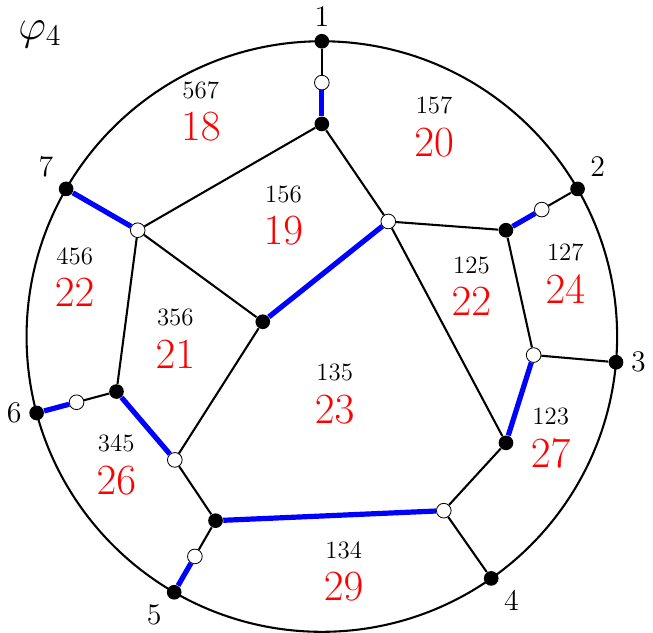} \hspace{1em} \includegraphics[width=1.5in]{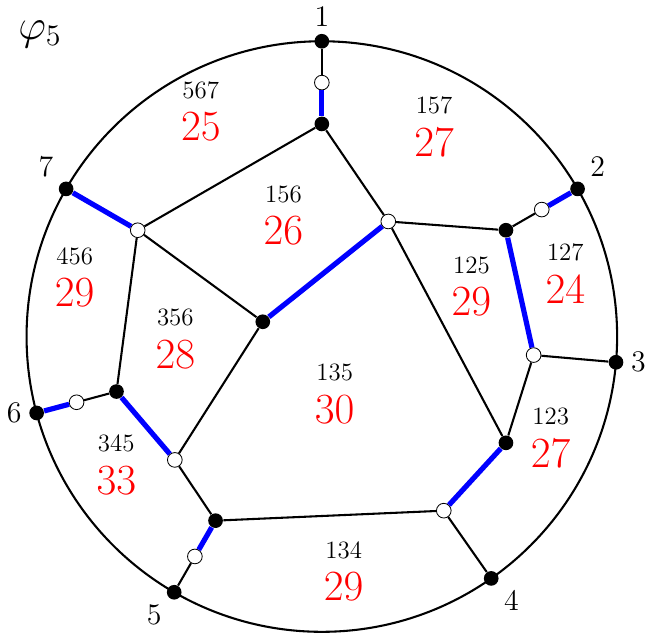} \hspace{1em} \includegraphics[width=1.5in]{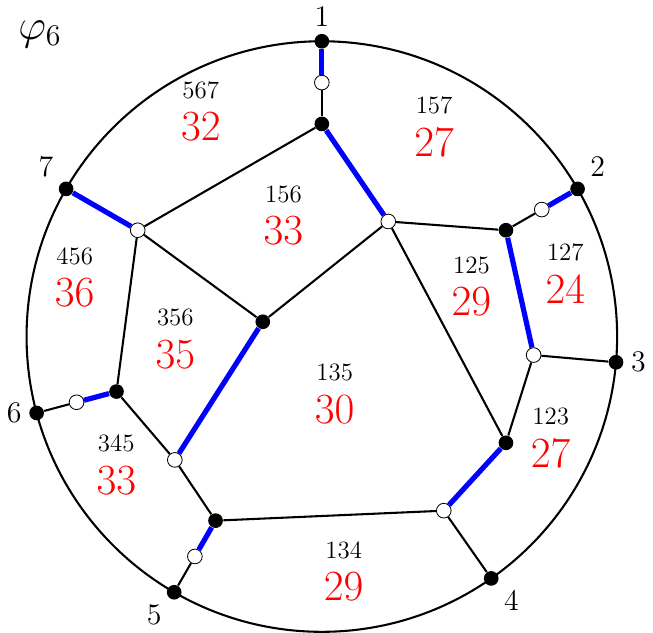}
    \caption{The set of all fundamental height functions on source-labeled $G_B$ with corresponding matching. Note that heights are given in red and face labels in black. Each $\varphi_j$ is listed above and left of its corresponding $G_B$.}
    \label{fig:fund_heights_source}
\end{figure}

\begin{figure}[ht]
    \centering
    \includegraphics[width=1.5in]{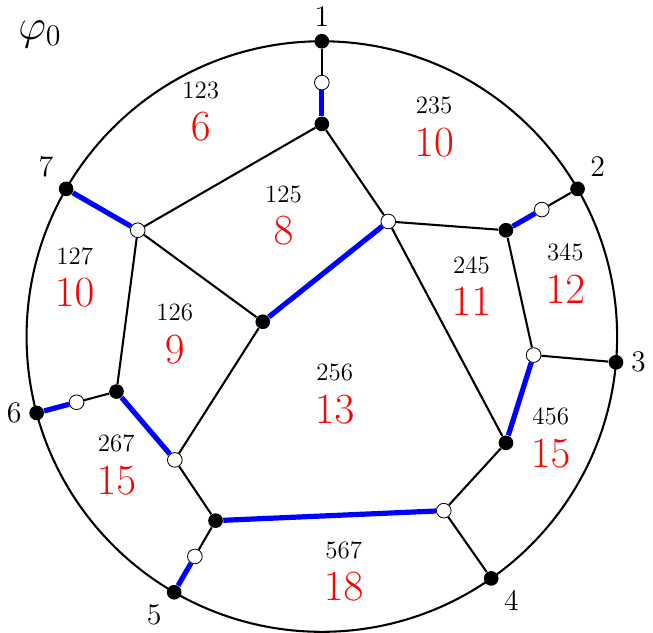} \hspace{.5em} \includegraphics[width=1.5in]{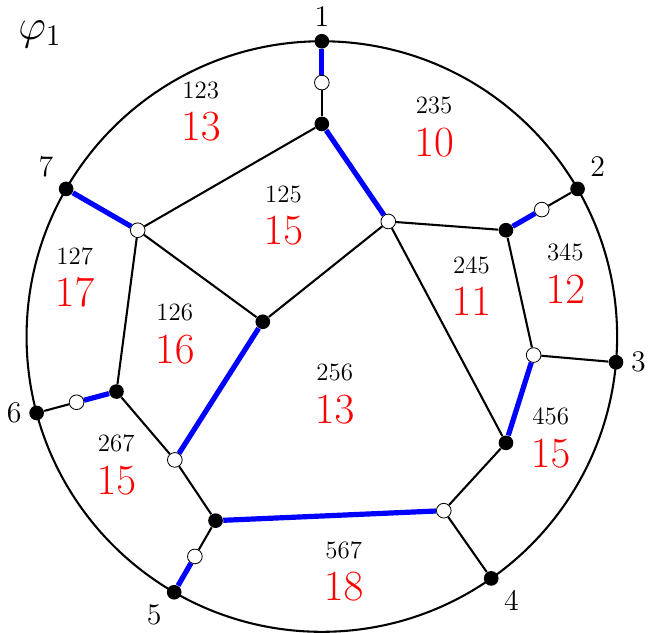} \hspace{.5em} \includegraphics[width=1.5in]{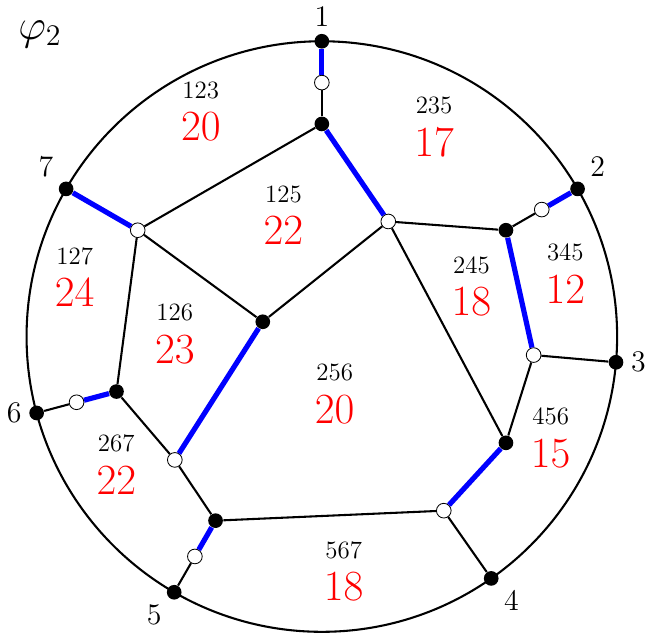} \hspace{.5em} \includegraphics[width=1.5in]{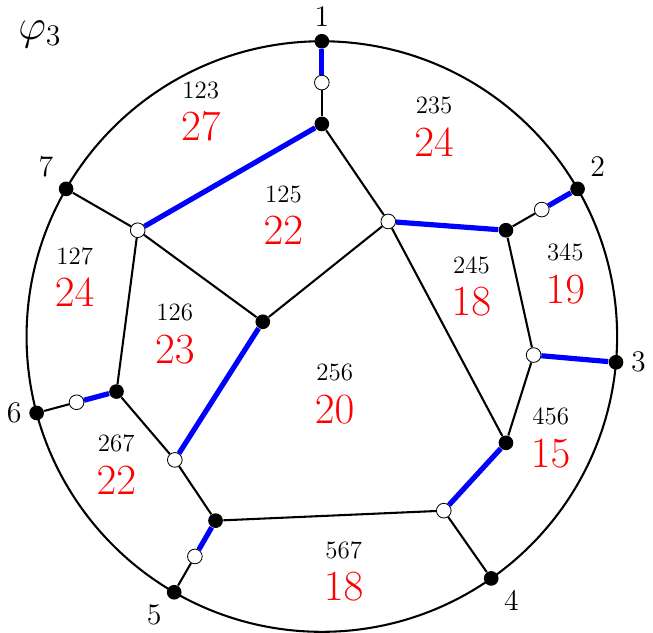} 
    \includegraphics[width=1.5in]{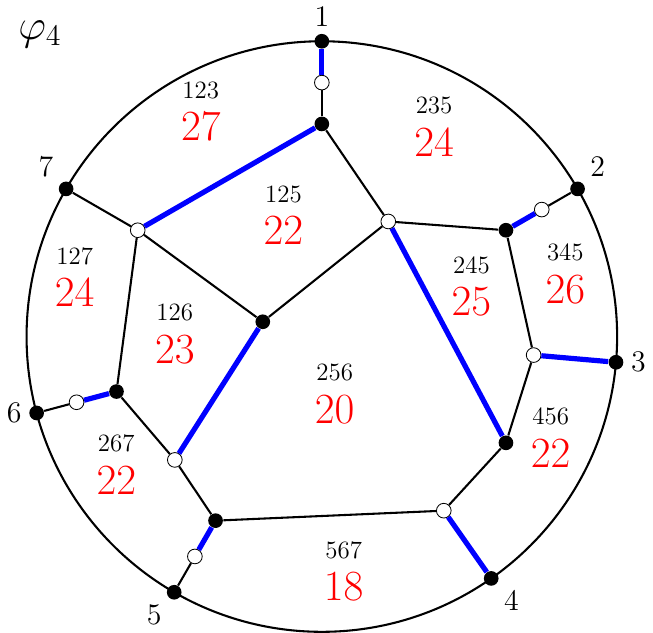} \hspace{1em} \includegraphics[width=1.5in]{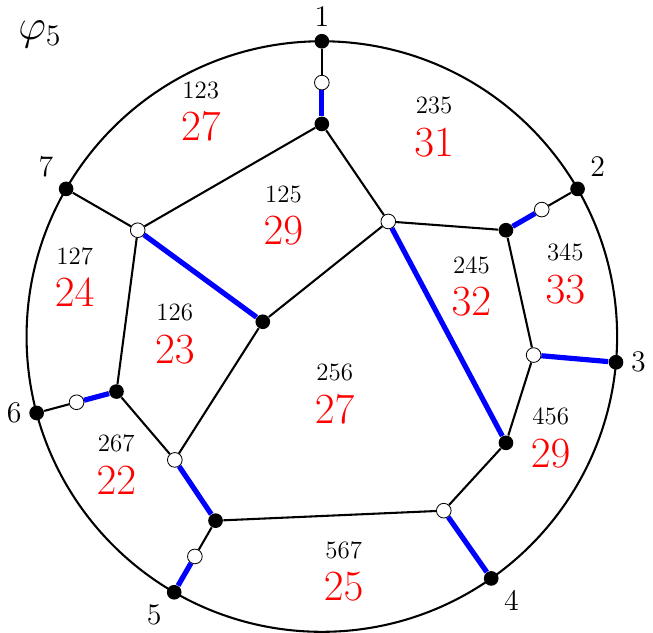} \hspace{1em} \includegraphics[width=1.5in]{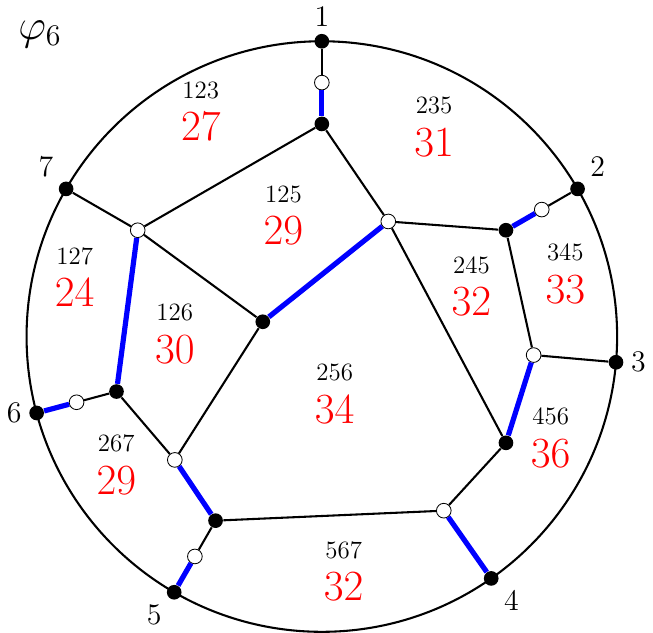}
    \caption{The set of all fundamental height functions on target-labeled $G_B$ with corresponding matching. Note that heights are given in red and face labels in black. Each $\varphi_j$ is listed above and left of its corresponding $G_B$.}
    \label{fig:fund_heights_targ}
\end{figure}
\end{example}

The correspondence of height functions with matchings is in fact a bijection up to the equivalence of height functions under monotonicity preserving transformations on faces about each internal $v \in V$ \cite{Propp}. Note that these monotonicity preserving transformations need not be uniform transformations over all faces of $G$, but rather could be small perturbations of values on some subset of $F(G)$. With the following definition, we reinforce this bijection with equivalence of height functions up to uniform, affine translations over all faces of $G$.

We revisit faces which are upstream and downstream from an edge $e \in E(G)$. Suppose $e = (b,w)$, for $w \in W$ and $b \in B$, is traversed by strands $i$ and $j$ such that $i$ enters $e$ at $w$ and $j$ enters $e$ at $b$. Suppose $f_i, f_j \in F(G)$ are the faces separated by $e$ such that $f_i$ contains the face label index $i$ and $f_j$ contains the face label index $j$. Given the proximity of $f_i$ and $f_j$ to $e$, we say that $f_i$ is \textit{directly} upstream from $e$ and $f_j$ is \textit{directly} downstream from $e$ {\cite[Section 5.1]{MSTwist}}. We give the following adjective for height functions.

\begin{definition}
\label{def:tight_2}
Let $e$ be the edge separating faces $f_i$ and $f_j$, with $f_i$ directly upstream and $f_j$ directly downstream from $e$, and $M$ the matching corresponding to  a height function $\varphi$. We say $\varphi$ is \textbf{tight} if $\varphi$ satisfies
\[ \varphi(f_i) - \varphi(f_j) = \begin{cases} i-j-n & \text{ if } e \not \in M \\ 
i-j & \text{ if } e \in M 
\end{cases} \qquad \text{for  $i > j$  and}
\]
\[ \varphi(f_i) - \varphi(f_j) = \begin{cases} i-j & \text{ if } e \not \in M \\ 
n + (i-j) & \text{ if } e \in M 
\end{cases} \qquad \text{for  $i < j$.}
\]
\end{definition}

In particular, we can reformulate the statement of the bijection between height functions and matchings as follows. 
\textit{Tight} height functions up to uniform, affine translations are in bijection with matchings on $G$. We can characterize the action of swiveling matchings in terms of tight height functions. 

\begin{proposition}
    \label{prop:swivel_action_2}
 Let $\varphi$ be a tight height function whose corresponding matching $M$ can be swiveled up at some face $f$ to obtain the matching $M'$. Then the tight height function corresponding to $M'$ is $\varphi'$, given by \[\varphi'(g) = \begin{cases}
   \varphi(g) & \text{ if } g \ne f \\
    \varphi(f) + n & \text{ if } g = f   
 \end{cases}. \]
\end{proposition}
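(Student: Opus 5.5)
Since tight height functions determine matchings up to uniform translation, and matchings determine tight height functions up to uniform translation, it suffices to check two things. First, $\varphi'$ satisfies the tightness conditions of \Cref{def:tight_2} with respect to $M'$. Second, $\varphi'$ is still a height function whose associated matching is $M'$. Every edge not on the boundary of $f$ separates two faces neither of which is $f$. For such an edge, both $\varphi'$ and the matched/unmatched status are unchanged, since $M$ and $M'$ agree off the boundary of $f$. So the tightness equations there are inherited from $\varphi$. The whole argument therefore reduces to a local check on the edges $e_1,\dots,e_{2m}$ bounding $f$.

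For each edge $e$ of $\partial f$, let $g$ be the face across $e$ from $f$, and let $i,j$ be the two strands crossing at $e$. Going around $f$ clockwise, the edges alternate between having $f$ directly upstream and $f$ directly downstream. This is because the strands run around the boundary of $f$ turning maximally left at white and maximally right at black vertices, so consecutive edges have opposite bicolored orientation. Hence the alternating edges of $f$ in $M$ are exactly one of these two classes.

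The key bookkeeping is that an upswivel moves $M$ from containing the white-to-black edges of $f$ (clockwise) to containing the black-to-white ones. I would first verify, from the definition of the strand through $e=(b,w)$ and the convention that $f_i$ contains the index $i$ of the strand entering at $w$, that the white-to-black edges (clockwise about $f$) are exactly those with $f$ directly downstream, i.e.\ $f=f_j$. The black-to-white edges are then those with $f=f_i$.

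The tightness formula is uniform: $\varphi(f_i)-\varphi(f_j) = (i-j) + n\bigl(\mathbf{1}[e\in M]-\mathbf{1}[i>j]\bigr)$. So toggling $e$ into $M$ raises $\varphi(f_i)-\varphi(f_j)$ by $n$, and toggling it out lowers it by $n$.

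For an edge with $f=f_j$, the edge leaves the matching. The required difference $\varphi(g)-\varphi(f)$ drops by $n$, which is achieved by raising $\varphi(f)$ by $n$. For an edge with $f=f_i$, the edge enters the matching. The required $\varphi(f)-\varphi(g)$ rises by $n$, again achieved by raising $\varphi(f)$ by $n$. Thus $\varphi'$ satisfies all tightness equations for $M'$.

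Finally, I must confirm that $\varphi'$ is a genuine height function inducing $M'$. Tightness already encodes the almost-monotone pattern: at each white vertex the unique matched edge is the unique ``jump'' where the value increases counterclockwise. So almost monotonicity at every internal vertex follows from the tightness relations together with the matching condition. I expect the main obstacle to be the orientation bookkeeping in the previous step, namely matching ``white-to-black clockwise about $f$'' with ``$f$ directly downstream''. If the identification came out reversed, the sign would become $-n$, contradicting the statement. So I would check it carefully on a square face of $G_B$, e.g.\ via \Cref{fig:256_poset}, before writing the general argument.
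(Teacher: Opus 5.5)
Your proposal is correct and follows essentially the paper's route: an edge-by-edge check around $\partial f$ that raising $\varphi(f)$ by $n$ shifts each tightness relation of \Cref{def:tight_2} exactly as the upswivel changes the matched status of that edge, with edges off $\partial f$ unaffected; your formula $(i-j)+n\bigl(\mathbf{1}[e\in M]-\mathbf{1}[i>j]\bigr)$ just packages the paper's four cases. The orientation identification you flag does come out as you need: traversing $\partial f$ clockwise keeps $f$ on the right, so on a white-to-black edge $f$ lies to the left of the strand running $b\to w$, i.e.\ $f=f_j$ is directly downstream. This agrees with the paper's displayed equations, where $f$ plays the downstream role for $e\in M$, even though the paper's prose says ``upstream.''
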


\begin{proof}
For any $g \in F(G)$, fix orientation of edges of $g$ the clockwise about $g$. Observe that $g$ is directly downstream from all of its alternating edges and directly upstream from all of its complementary alternating edges. We now consider the $f \in F(G)$ such that $M$ can be upswiveled at $f$. Then $M$ contains all white-to-black edges of $f$ which coincides exactly with edges from which $f$ is upstream. Equivalently, $M$ does not contain any black-to-white edges of $f$ which are exactly the edges from which $f$ is downstream. Suppose $e,e' \in E(G)$ are a pair consecutive edges of $f$ in the clockwise orientation such that $e \in M$ and $e' \not \in M$, i.e. $f$ is directly upstream from $e$ and directly downstream from $e'$. Let $j$ be the unique strand traversing both $e$ and $e'$ so that $f$ lies to the right of $j$, let $g_j$ and $h_j \in F(G)$ be the faces separated from $f$ be $e$ and $e'$, respectively, and let $i_j$ and $\ell_j$ be the strands traversing $e$ and $e'$, respectively. Then since $\varphi$ is a tight height function, we have 
\begin{align*}
    \varphi(g_j) - \varphi(f) &= \begin{cases}
    j-i_j & \text{ if } j > i_j \\
    n + (j - i_j) & \text{ if } j < i_j
\end{cases}  && & \varphi(f) - \varphi(h_j) = \begin{cases}
\ell_j - j - n & \text{ if } \ell_j > j \\
\ell_j - j & \text{ if } \ell_j < j
\end{cases}.&
\end{align*}

Define $\varphi'$ as in the statement of \Cref{prop:swivel_action_2}. Then we observe
\begin{align*}
    \varphi'(g_j) - \varphi'(f) &= \varphi(g_j) - \varphi(f) - n && & \varphi'(f) - \varphi'(h_j) = \varphi(f) +n - \varphi(h_j)& \\
    &= \begin{cases}
        j - i_j - n & \text{ if } j > i_j \\
        j - i_j & \text{ if } j < i_j
    \end{cases}  && & =\begin{cases}
        \ell_j - j & \text{ if } \ell_j> j \\
        n + (\ell_j - j) & \text{ if } \ell_j < j
    \end{cases}&
\end{align*}
which implies the matching corresponding to $\varphi'$ contains $e'$ but does not contain $e$. Further, since choosing any such pair of consecutive edges $e$ and $e'$ about $f$ yields the same result, then $\varphi'$ is a tight height function and the corresponding matching to $\varphi'$, say $M'$, contains all black-to-white edges of $f$ and all edges of $M$ not incident to $f$. In particular, $\varphi'$ gives the matching $M'$ which is the result of an upswivel of $M$ at $f$ as desired. 
\end{proof}

We can analogously characterize the action of swiveling a matching down at a face $f$ on a tight height function $\varphi$ to result in a new tight height function $\varphi'$ such that $\varphi'(f) = \varphi(f) - n$ and is unchanged at all other faces. In characterizing the action of swiveling matchings up and down in terms of the values achieved by tight height functions, we demonstrate in the next proposition the uniqueness of each of the fundamental heights $\varphi_j$. Moreover, this yields that the matching corresponding to each of the $\varphi_j$ is $M_-$ and $M_+$ in its respective lattice of matchings.

\begin{proposition}
\label{prop:fund_is_tight_unique} 
The fundamental height function $\varphi_j$ is tight and its corresponding matching is unique in its lattice of matchings. 
\end{proposition}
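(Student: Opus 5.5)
The proof has two parts. First, a local computation shows that $\varphi_j$ is tight. Second, a cyclic-order argument shows that the corresponding matching $M^{(j)}$ admits no swivel at any face, neither up nor down. Combined with \Cref{thm:matching_poset_bdy_cond}, where the covering relations of $(\mathcal{M}_I,\le)$ are exactly swivels, this forces $\mathcal{M}_{\partial M^{(j)}}=\{M^{(j)}\}$. So the lattice is a single point, and $M^{(j)}$ is simultaneously $M_-$ and $M_+$.

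\textbf{Tightness.} Write $x'$ for the shift used in $\varphi_j$, namely $x'=x$ if $x>j$ and $x'=x+n$ if $x\le j$; this is the $(j+1)$-ordering embedded in $\{j+1,\dots,j+n\}$.
\begin{itemize}
\item Let $e=(b,w)$ be traversed by strands $i$ and $i_2$, and let $f_i,f_{i_2}$ be the directly upstream and directly downstream faces.
\item By \Cref{lem:common_face_indexing} at $w$, we have $f_i=S_w\cup\{i\}$ and $f_{i_2}=S_w\cup\{i_2\}$, so $\varphi_j(f_i)-\varphi_j(f_{i_2})=i'-i_2'$. This value is one of $i-i_2$, $i-i_2+n$, $i-i_2-n$.
\item Propp's rule says $e\in M^{(j)}$ exactly when $e$ is the edge at $w$ across which the almost monotone sequence of shifted indices $i'$ jumps back up. Equivalently, going counterclockwise about $w$, $e\in M^{(j)}$ iff the upstream-to-downstream comparison of $i'$ versus $i_2'$ is the ``wrong'' one relative to the cyclic decrease.
\item I will check the four cases ($i\gtrless i_2$, and whether the shift separates $i$ from $i_2$). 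In each case I will verify that $i'-i_2'$ equals the value prescribed in \Cref{def:tight_2} according to membership of $e$ in $M^{(j)}$.
\item This is routine bookkeeping. The only care needed is matching the orientation conventions: counterclockwise about $w$, clockwise about $b$.
\end{itemize}

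\textbf{No swivels.} Fix an internal face $f$ and orient its boundary edges clockwise, as in the proof of \Cref{prop:swivel_action_2}.
\begin{itemize}
\item Suppose $M^{(j)}$ could be upswiveled at $f$. Then along the boundary of $f$, the edges of which $f$ is directly upstream all lie in $M^{(j)}$, and those of which $f$ is directly downstream do not.
\item Let $m_1,\dots,m_r$ be the strands bounding $f$ in cyclic order, so consecutive strands cross at the corners of $f$.
\item By the tight values computed above, each of these memberships translates into a strict inequality between consecutive shifted indices $m_s'$ and $m_{s+1}'$. All of the inequalities point the same way, e.g.\ $m_1'<m_2'<\cdots<m_r'<m_1'$.
\item This is impossible for distinct integers, since a strict cyclic chain would give $m_1'<m_1'$. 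The downswivel case gives the reverse cyclic chain and the same contradiction.
\end{itemize}
Hence $M^{(j)}$ is both minimal and maximal in the sense of \Cref{def:min_match}. Since $\mathcal{M}_I$ is a finite distributive lattice, hence connected through covering relations, and those coverings are swivels, $M^{(j)}$ is the unique element of its lattice.

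\textbf{Main obstacle.} The main obstacle is the translation step in the no-swivel argument: showing that the combinatorial condition ``all alternating edges of $f$ in $M^{(j)}$ have the same orientation'' becomes a uniformly directed chain of inequalities among the shifted strand indices around $f$. I would first test this on a hexagonal face of $G_B$ against \Cref{fig:fund_heights_source}, and then write the general argument edge by edge using the tightness formulas.

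As a consistency check, I would also read off $\partial M^{(j)}$ from the boundary faces. I expect it to be a necklace set, by \Cref{prop:gneck_rgneck_as_faces}, which agrees with the fact that necklace Plücker coordinates are monomials in the boundary measurement map.
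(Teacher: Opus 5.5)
Your tightness check (the four cases of the shifted indices) and your closing step (no swivel in either direction, so $\mathcal{M}_{\partial M^{(j)}}$ is a singleton because the order is generated by swivels) are fine and agree with the paper. The gap is exactly the step you flagged: the swivel condition on the edges of $f$ does \emph{not} produce a uniformly directed chain among consecutive strands.

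To see why, label the strands that turn inside $f$, in clockwise order, as $a_1,d_1,a_2,d_2,\dots,a_r,d_r$. Here $a_s$ turns at the white vertex $w_s$ (so $a_s\in f$), and $d_s$ turns at the next black vertex (so $d_s\notin f$). Each edge of $f$ carries one $a$ and an adjacent $d$. The face across the edge after $w_s$ is $(f\setminus\{a_s\})\cup\{d_s\}$, and the face across the edge before $w_s$ is $(f\setminus\{a_s\})\cup\{d_{s-1}\}$. Feeding the alternating-edge hypothesis through \Cref{def:tight_2} gives the same ``included versus excluded'' comparison on every edge of $f$. Either $a_s'<d_{s-1}'$ and $a_s'<d_s'$ for all $s$, or all of these are reversed, i.e.\ $f$ is a strict local minimum or maximum. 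Along your cyclic list these inequalities alternate in direction, $a_1'<d_1'>a_2'<d_2'>\cdots$. They are perfectly satisfiable: take all $a_s$ small and all $d_s$ large. So no argument using only memberships of edges of $f$ can give $m_1'<\cdots<m_r'<m_1'$.

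The missing ingredient is \Cref{lem:common_face_indexing} applied at each white vertex of $f$. Around $w_s$, every face has the form $S_{w_s}\cup\{x\}$, and the shifted indices $x'$ decrease cyclically counterclockwise. The faces $f$, $(f\setminus\{a_s\})\cup\{d_s\}$, $(f\setminus\{a_s\})\cup\{d_{s-1}\}$ appear in that counterclockwise order. Hence $(a_s',d_s',d_{s-1}')$ is a cyclic rotation of a strictly decreasing triple. Since $a_s'$ is either the largest or the smallest of the three, in both cases $d_{s-1}'<d_s'$. Going around $f$ then gives $d_1'<d_2'<\cdots<d_r'<d_1'$, which is the contradiction.

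This is precisely the paper's system of 3-term inequalities; for the square $Sac$ they are $c<_{j+1}b<_{j+1}d$ and $a<_{j+1}d<_{j+1}b$. In your tightness language: $w_s$ is already matched along an edge of $f$, so all its other edges are unmatched. Tightness across \emph{those} edges, not across edges of $f$, is what compares $d_{s-1}$ with $d_s$ (through intermediate strands if $\deg w_s>3$). The monotone cycle therefore lives on alternate strands only, and it comes from the vertices of $f$ rather than its edges.
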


\begin{proof}
Note that $\varphi_j$ induces a $(j+1)$-ordering on $[n]$. One can verify that $\varphi_j$ is a tight height function by exhaustion of cases at each $e \in E(G)$ traversed by strands $i$ and $\ell$. Specifically, one can verify that the edges included in and excluded from the matching corresponding to $\varphi_j$ satisfy those conditions set in \Cref{def:tight_2} under study of the cases $j < i < \ell$ (equivalently $i < \ell \le j$), $i \le j < \ell$, $j < \ell < i$ (equivalently $\ell < i \le j$), and $\ell \le j < i$. 

Now suppose by way of contradiction, we can swivel the matching corresponding to $\varphi_j$ at some face $f$. Suppose this swivel at $f$ is an upswivel, meaning for all $g \in F(G)$ neighboring $f$, we have $\varphi(f) < \varphi(g)$. Invoking \Cref{lem:common_face_indexing}, about each $w \in W$ incident to $f$ there is exactly one face label entry which almost monotonically decreases under the $(j+1)$-ordering when traversing faces counterclockwise about $w$. Suppose $f$ is a square with $f = Sac$ for $|S| = k-2$ such that the four neighboring faces of $f$ have face labels $Sab, Sad, Scd, Sbc$ in a clockwise order about $f$. Since we may swivel this matching up at $f$, we obtain the pair of inequalities on the indices $a,b,c,d$, namely, $c <_{j+1} b <_{j+1} d$ and $a <_{j+1} d <_{j+1} b$ which yields a contradiction. Inducting on the number of white vertices bordering $f$, say $m$, we obtain a system of $m$ 3-term inequalities which yield analogous contradictions. Now if the swivel at $f$ is a downswivel, we reverse the direction of inequalities on face label entries and obtain the same contradiction. Thus, the matching corresponding to $\varphi_j$ is unique.
\end{proof}

The previous claim allows us to characterize tight height functions as those which are built from piecing together fundamental height functions. We now study some behavior of matchings when varying by subsequent fundamental height functions. We obtain the following. 

\begin{lemma}
    \label{lem:lollis}
Suppose $G$ has a lollipop at boundary vertex $j$. Then $\varphi_{j-1}$ and $\varphi_j$ induce the same matching on $G$. 
\end{lemma}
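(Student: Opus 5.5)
The plan is to compare $\varphi_{j-1}$ and $\varphi_j$ directly on faces and show they differ by a constant. By the definition in \Cref{prop:fund_heights}, for any face $f$ the two sums agree term by term except at the index $j$. If $j \in f$, then $j$ contributes $j+n$ to $\varphi_j(f)$ but only $j$ to $\varphi_{j-1}(f)$. If $j \notin f$, the two heights coincide on $f$. Hence
\[\varphi_j(f) - \varphi_{j-1}(f) = n\cdot \mathbf{1}[j \in f].\]
The goal is therefore to show that, when there is a lollipop at $j$, the indicator $\mathbf{1}[j\in f]$ is constant over all faces of $G$. Once that holds, $\varphi_j$ and $\varphi_{j-1}$ differ by a uniform translation. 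The matching read off from a height function (via the strict inequality $\varphi(f_p) > \varphi(f_1)$ at each internal vertex) depends only on differences of heights of adjacent faces, so it is unchanged by a uniform translation. Both heights are tight by \Cref{prop:fund_is_tight_unique}, so the equivalence of tight heights up to translation also gives the conclusion directly.

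The key step is the constancy of $\mathbf{1}[j\in f]$. If $G$ has a lollipop at $j$, then the strand starting at boundary vertex $j$ travels along the single edge to the lollipop vertex. It then turns maximally left (white) or right (black) at a leaf, which means it comes straight back and ends at $j$, so $\tilde{\pi}(j) = j$. (In the black case, the white bivalent vertex inserted by (M2) does not change this, since turning at a bivalent vertex just continues along the edge.) This strand encloses no face of $G$; the lollipop edge lies inside a single face adjacent to the boundary near $j$.

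For the target-labeling, the faces to the left of $\vec{\tau}_j$ are then either all faces of $G$ or none, depending on the orientation and hence on the color of the lollipop. The same holds for the source-labeling with $\cev{\tau}_j$. Consistent with this, \Cref{prop:gneck_rgneck_as_faces} gives that $j$ lies in the necklace sets either always (the $\overline{j}$ case) or never (the $\underline{j}$ case). So $j$ lies in every face label or in none.

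The step I expect to need the most care is the topological claim that a degenerate strand $j\to j$ has all of $F(G)$ on one side. It can be handled by viewing the strand as a curve homotopic to a small boundary arc around $j$ and noting that the face containing the lollipop is the same face on both sides of the lollipop edge. Alternatively, it follows purely combinatorially from the necklace descriptions $\gneckset{a}$ and $\rgneckset{a}$: $j$ is in all of them or none. Then, since the face labels of boundary faces are the necklace sets, and adjacent faces differ only in indices of strands crossing the shared edge (never $j$, since the strand $j$ touches no internal edge other than the lollipop edge), connectivity of the dual graph propagates this to every face.
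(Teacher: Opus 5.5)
Your proposal is correct and follows the paper's argument: $\varphi_j-\varphi_{j-1}$ equals $n$ on faces whose label contains $j$ and $0$ otherwise. A lollipop at $j$ forces $j$ into every face label (white) or into none (black), so the two heights differ by a uniform translation and induce the same matching. The only difference is that you justify the all-or-none claim (the strand $j$ traverses only edges with the same face on both sides, together with connectivity of the faces across edges), whereas the paper simply asserts it.
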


\begin{proof}
    Suppose we have a white lollipop at $j$. Then every $f \in F(G)$ is labeled with the index $j$, and in particular, $\varphi_{j}(f) = \varphi_{j-1}(f) + n$ for all $f \in F(G)$. As this is a uniform affine transformation, the matchings are the same. Now suppose we have a black lollipop at $j$. Then none of $f \in F(G)$ contains the face label index $j$, and in particular, $\varphi_{j}(f) = \varphi_{j-1}(f)$ for all $f \in F(G)$ which induce the same matching on $G$. 
\end{proof}

Essentially, \Cref{lem:lollis} yields that we may develop our theory of height functions and matchings while ignoring any lollipops of $G$. Moreover, one may recognize from the characterization of any positroid $\mathcal{P}$ as possible boundary conditions of matchings in \Cref{ssec:boundary_meas_map} and that a white lollipop at $j$ implies $j \in I$ for all $I \in \mathcal{P}$, and similarly, that a black lollipop at $j$ implies $j \not\in I$ for all $I \in \mathcal{P}$, then any lollipop either \textit{always} contributes to the boundary condition of a matching or \textit{never} contributes to the boundary condition of a matching. Thus, we suppose without any loss of generality that $G$ does not have any lollipops.

\begin{lemma}
\label{lem:passing_lemma}
    The matching corresponding to $\varphi_{j-1}$ includes all black-to-white edges on the $j$-strand in the orientation of the $j$-strand. In passing to the matching corresponding to $\varphi_j$, this matching includes all white-to-black edges on the $j$-strand in the orientation of the $j$-strand, with all other edges not contained on the $j$-strand from the matching corresponding to $\varphi_{j-1}$ unchanged.
\end{lemma}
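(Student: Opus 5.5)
The plan is to compare $\varphi_{j-1}$ and $\varphi_j$ directly. From \Cref{prop:fund_heights}, the only entry whose shifted value changes is $i_\ell = j$: it is $j$ under $\varphi_{j-1}$ and $j+n$ under $\varphi_j$. Hence
\[\varphi_j(f) = \varphi_{j-1}(f) + n\cdot \mathbf{1}[j \in f].\]
Since lollipops are excluded, the $j$-strand is a genuine path from boundary to boundary. It splits $F(G)$ into the faces to its left, which contain $j$, and the faces to its right, which do not. So $\varphi_j$ is obtained from $\varphi_{j-1}$ by adding $n$ to every face on the left of the $j$-strand.

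By \Cref{prop:fund_is_tight_unique}, both functions are tight. Therefore the matching of each is read off edge by edge from the rule in \Cref{def:tight_2}, using only the difference $\varphi(f_i)-\varphi(f_\ell)$ across each edge. For an edge $e$ not on the $j$-strand, the two faces it separates are both left of the $j$-strand or both right of it, because crossing $e$ does not cross the strand. The difference is then unchanged, so membership of $e$ in the matching is unchanged. This gives the last clause of the statement.

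For an edge $e$ on the $j$-strand, traversed by strands $j$ and some $i$, the face left of $j$ gains $n$ relative to the face on the right. Tightness leaves only two admissible values for each difference, and they differ by exactly $n$. So the shift by $n$ toggles whether $e$ is in the matching: every edge on the $j$-strand switches status between the two matchings.

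It remains to identify which edges of the strand are matched. The edges along a strand alternate between white-to-black and black-to-white in the strand's orientation. I will check directly under $\varphi_{j-1}$ that the black-to-white edges are matched and the white-to-black ones are not; toggling then gives the white-to-black set under $\varphi_j$.

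To do this, fix an edge $e$ on the $j$-strand and determine which of $f_i,f_j$ is directly upstream. Recall that $f_j$ is the face containing $j$, lying to the left of strand $j$, and that the strand entering $e$ at $w$ defines the upstream face. In the $j$-ordering used by $\varphi_{j-1}$, the index $j$ is the minimum. Plugging $j$ into the case split of \Cref{def:tight_2} (with $j$ replaced by its shifted value, as in the proof of \Cref{prop:fund_is_tight_unique}) shows that $e\in M$ exactly when strand $j$ traverses $e$ from black to white.

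The main obstacle is this orientation and case bookkeeping: matching "strand $j$ enters $e$ at $b$" with "$f_j$ directly downstream" while tracking the shifted values. I would handle it by treating the two cases, $j$ entering at $w$ and $j$ entering at $b$, separately, and cross-checking against Figure~\ref{fig:fund_heights_source}.
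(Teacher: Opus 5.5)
Your argument is correct, but it takes a different route from the paper's.

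\textbf{The paper's route.} It first reduces to $j=1$ by cyclically rotating the boundary labels. It then argues locally at white vertices, using only almost monotonicity (\Cref{lem:common_face_indexing}). At a white vertex on the $1$-strand, exactly one incident face contains $1$. That face is the minimum of the induced cyclic order under $\varphi_0$ and the maximum under $\varphi_1$, so the matched edge at that vertex moves from the incoming strand edge to the outgoing one. At white vertices off the strand, all incident faces agree on whether they contain $1$, so the relative order, and hence the matched edge, is unchanged. Tightness is never used.

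\textbf{Your route.} You use the global identity $\varphi_j-\varphi_{j-1}=n\cdot\mathbf{1}[j\in f]$ together with tightness of both functions (\Cref{prop:fund_is_tight_unique}). Membership of each edge is then read from the height difference across it. Invariance off the strand and toggling on it follow immediately: the difference changes by $\pm n\neq 0$ while staying in a two-element admissible set spaced by $n$. Only the base identification needs the bookkeeping you flag, and it does go through. Under $\varphi_{j-1}$, the index $j$ has the smallest shifted value, so $\varphi_{j-1}(f_j)<\varphi_{j-1}(f_i)$ across any edge of the $j$-strand. Checking the subcases $i>j$ and $i<j$ against \Cref{def:tight_2} shows $e\in M$ exactly when $f_j$ is directly downstream, i.e.\ when strand $j$ enters $e$ at its black endpoint.

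\textbf{What each buys.} Your version makes visible that passing from $\varphi_{j-1}$ to $\varphi_j$ is a ``swivel'' by $+n$ of the whole region left of the strand, in the spirit of \Cref{prop:swivel_action_2}, and it treats every edge uniformly. The cost is reliance on tightness of the fundamental heights, whose proof in the paper is itself only a sketched case check; your base step essentially redoes that check for edges on the $j$-strand. The paper's vertex-by-vertex argument needs nothing beyond \Cref{lem:common_face_indexing} and the rule for reading a matching off a height function.
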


\begin{proof}
Recall that for any $j \in [n]$, $\varphi_j$ sets the $(j+1)$-ordering on $[n]$. Now for any consecutive pair $j-1, j \in [n]$, the $j$- and $(j+1)$-orderings of $[n]$ on face label entries is equivalent to cycling the boundary vertex enumeration clockwise and fixing the internal graph structure of $G$ (up to relabeling faces), i.e. $i \mapsto i+1 \mod n$ for all $i \in [n]$. Thus, it  is sufficient to prove this claim for the case $j =1$.

We first consider the matching corresponding to $\varphi_0$ along the $1$-strand. In particular, at any $w \in W$ along path traversed by the $1$-strand, there is exactly one face incident to $w$ which contains the face label entry 1 by \Cref{lem:common_face_indexing}. Since $\varphi_0$ induces the $1$-ordering on $[n]$, then the black-to-white edge in the orientation of the $1$-strand is included in the matching corresponding to $\varphi_0$. Now consider the matching corresponding to $\varphi_1$ along the $1$-strand. Since $\varphi_1$ induces the $2$-ordering on $[n]$, i.e. $1$ is $2$-maximal, then the white-to-black edge in the orientation of the $1$-strand is included in the matching corresponding to $\varphi_1$. Lastly consider the edges of the matching corresponding to $\varphi_0$ and $\varphi_1$ which do not lie on the $1$-strand. Notice that all faces incident to any fixed $w' \in W$ which do not lie on the path traversed by the $1$-strand either contain the face label index $1$ if $w'$ lies to the left of this path or do not contain the face label index $1$ if $w'$ lies to the right of this path. In particular, changing from the $1$-ordering on $[n]$ to the $2$-ordering on $[n]$ does not impact the almost monotonicity of faces about any such $w'$, so also the edges of the matching corresponding to $\varphi_0$ are the same as the edges of the matching corresponding to $\varphi_1$ when not along the $1$-strand.    
\end{proof}

We will forgo placing boundary conditions on the matching corresponding to each $\varphi_j$ until we have fixed a labeling on $G$. Instead, we define some basic operations we can perform with height functions. We use $\varphi_j$ to denote our fundamental height functions and $h_i$ to denote any arbitrary height function in the following. 

\begin{definition}
\label{def:translation}
    Let $\varphi_j$ be a fundamental height function. Then the \textbf{translation} of $\varphi_j$ by $a$ is the function $\varphi_{j}-a$ given by $(\varphi_{j}-a)(f) = \varphi_j(f) - a$ for all $f \in F(G)$. 
\end{definition}

\begin{definition}
\label{def:maximum}
    Given an ordered collection $S = \{h_1,h_2,\ldots, h_p\}$ of height functions, the \textbf{maximum} of $S$ is denoted $\max\left\{ h_1,h_2, \ldots, h_p\right\}$ which we evaluate on faces $f \in F(G)$ as \[\max\left\{ h_1,h_2, \ldots ,h_p\right\}(f) = \max\left\{ h_1(f),h_2(f), \ldots, h_p(f)\right\}.\]
\end{definition}

\begin{definition}
\label{def:minimum}
    Given an ordered collection $S = \{h_1,h_2,\ldots, h_p\}$ of height functions, the \textbf{minimum} of $S$ is denoted $\min\left\{ h_1,h_2, \ldots, h_p\right\}$ which we evaluate on faces $f \in F(G)$ as \[\min\left\{ h_1,h_2, \ldots, h_p\right\}(f) = \min\left\{ h_1(f),h_2(f), \ldots, h_p(f)\right\}.\]
\end{definition}

We will use the translation and maximum operations to meaningfully combine fundamental height functions into new height functions we call translated maximums. Similarly, we will use the translation and minimum operations to combine fundamental height functions into new height functions we call translated minimums. More precisely, given the input of an ordered collection of fundamental height functions $\varphi_{j_1}, \varphi_{j_2}, \ldots, \varphi_{j_p}$, we translate the $i$th fundamental height by $(i-1)n$ and then consider the corresponding maximum or minimum. One can quickly verify that if our list of fundamental height functions has length greater than $k$, then there are fundamental heights which will not contribute to the translated maximum or minimum, and so we choose that our list of fundamental heights is exactly $k$. We codify this construction in the following definition.

\begin{definition}
    \label{def:tmax_tmin}
Given the ordered collection $S = \{\varphi_{j_1}, \varphi_{j_2}, \ldots \varphi_{j_k}\}$ of fundamental height functions, the \textbf{translated maximum} of $S$ is 
\[\tmax{\varphi_{j_1},\varphi_{j_2},\ldots,\varphi_{j_k}} = \max\left\{ \varphi_{j_1}, (\varphi_{j_2}-n), \ldots, (\varphi_{j_k} - (k-1)n) \right\}.\]

Analogously, the \textbf{translated minimum} of $S$ is \[\tmin{\varphi_{j_1},\varphi_{j_2},\ldots,\varphi_{j_k}} = \min\left\{ \varphi_{j_1}, (\varphi_{j_2}-n), \ldots, (\varphi_{j_k} - (k-1)n) \right\}.\]
\end{definition}

We give an equivalent characterization of translated maximums and translated minimums with more concise notation in the following.

\begin{definition}
    \label{def:Tmax_Tmin}
Given the ordered collection $S = \{\varphi_{j_1}, \varphi_{j_2}, \ldots \varphi_{j_p}\}$ of fundamental height functions and the increasing sequence $1 \le t_1 < t_2 < \cdots < t_{p-1} < k$, the \textbf{compressed translated maximum} of $S$ is \[\TMAX{\varphi_{j_1}, \varphi_{j_2}(a_1), \ldots, \varphi_{j_p}(a_{p-1})} = \max\left\{ \varphi_{j_1}, (\varphi_{j_2}-t_1n), \ldots, (\varphi_{j_p} - t_{p-1}n) \right\}\] where $a_i = t_i - t_{i-1}$ for all $i \ge 2$ and $a_1 = t_1$. 

Similarly, the \textbf{compressed translated minimum} of $S$ is
    \[\TMIN{\varphi_{j_1}, \varphi_{j_2}(a_1), \ldots, \varphi_{j_p}(a_{p-1})} = \min\left\{ \varphi_{j_1}, (\varphi_{j_2}-t_1n), \ldots, (\varphi_{j_p} - t_{p-1}n) \right\}\] where $a_i = t_i - t_{i-1}$ for all $i \ge 2$ and $a_1 = t_1$.     
\end{definition}

\begin{remark}
\label{rem:Ts_equal_ts}
    In the special case where $k=p$ and $t_i = i$ for all $i$, hence $a_i = 1$ for all $i$, the compressed translated maximum simplifies as
    \[\TMAX{\varphi_{j_1}, \varphi_{j_2}(1), \ldots, \varphi_{j_k}(1)} = \tmax{\varphi_{j_1},\varphi_{j_2},\ldots,\varphi_{j_k}}.\]

    Similarly, \[\TMAX{\varphi_{j_1}, \varphi_{j_2}(1), \ldots, \varphi_{j_k}(1)} = \tmin{\varphi_{j_1},\varphi_{j_2},\ldots,\varphi_{j_k}}.\]

    On the other hand, if some of the $a_i$'s are greater than $1$, see Lemmas \ref{lem:tmax_k_funds} and \ref{lem:tmin_k_funds} on the translation between the compressed and uncompressed forms.
\end{remark}

The two notations for translated maximums and translated minimums (in \Cref{def:tmax_tmin}) and their compressed forms (in \Cref{def:Tmax_Tmin}), repsectively,  is due to the utility of considering these constructions in two separate ways. With respect to notation in \Cref{def:Tmax_Tmin}, one can express the boundary conditions of the matchings corresponding to translated maximums and minimums (c.f. \Cref{thm:main} and \ref{thm:main_restate}). With respect to the notation of \Cref{def:tmax_tmin}, one can easily verify they constructed all possible translated maximums and minimums based on the choice of $k$ fundamental height functions (c.f. \Cref{lem:tmax_k_funds} and \ref{lem:tmin_k_funds}).

In \Cref{def:Tmax_Tmin} we refer to the $a_i$ as (consecutive) \textbf{relative translations} of consecutive fundamental heights in some translated maximum or translated minimum. This is because $a_i$ determines the translation of $\varphi_{j_{i+1}}$ by $t_i n$ relative to the translation of $\varphi_{j_i}$ by $t_{i-1}n$ in the translated maximum or minimum. For nonconsecutive fundamental heights in some translated maximum or translated minimum, say $\varphi_{j_\ell}$ and $\varphi_{j_m}$, we say their relative translation is the sum $r_{\ell,m} = \SumBlank{\ell \le i < m} a_i = t_{m-1} - t_{\ell-1}$. Note that in bounding the largest translation factor $t_{p-1} < k$, we ensure $(\varphi_{j_p} - t_{p-1}n)$ may contribute to its corresponding translated maximum and $\varphi_{j_1}$ may contribute to its corresponding translated minimum. The increasing property on $t_i$ yields that $a_i \ge 1$ for all $i$, and the upper bound on $t_{p-1} < k$ yields that $\sum_i a_i < k$. 

\begin{definition}
\label{def:rel_trans}
Given a choice of positive integers $(a_1,a_2,\dots, a_{p-1})$ as above, let $a_p$ be the unique value such that $\Sumblank{i=1}{p}a_i = k$. Then we say a translated maximum or translated minimum with consecutive relative translations given by $a_i$ has relative translations of type $\vec{a} = (a_1,\ldots, a_p)$.
\end{definition}

Let $\TTMAX:=\TMAX{\varphi_{j_1}, \varphi_{j_2}(a_1), \ldots, \varphi_{j_p}(a_{p-1})}$ and $\TTMIN:= \TMIN{\varphi_{j_1}, \varphi_{j_2}(a_1), \ldots, \varphi_{j_p}(a_{p-1})}$ where $\varphi_{j_1},\varphi_{j_2}, \ldots, \varphi_{j_p}$ are an arbitrary ordered collection of fundamental height functions and their arbitrary relative translations of type $\vec{a}$. We claim that $\TTMAX$ and $\TTMIN$ result in tight height functions and further that $\TTMAX$ will correspond to minimal matchings and $\TTMIN$ will correspond to maximal matchings. Let $\ell < m$, so also the indices of fundamental heights give $j_\ell < j_m$, where $\varphi_{j_\ell}$ and $\varphi_{j_m}$ are some pair appearing in $\TTMAX$ or $\TTMIN$. Let $H = [j_m]\setminus [j_\ell]$ and let $f \in F(G)$ have the face label $I$. From above, the relative translation of $\varphi_{j_\ell}$ and $\varphi_{j_m}$ is $r_{\ell,m} = t_{m-1} - t_{\ell - 1}$. Then the following condition on $f$ determines whether $\varphi_{j_\ell}$ or $\varphi_{j_m}$ contributes to the resulting height at $f$ under the following maximum or minimums, namely we observe: 
\[
\max\left\{(\varphi_{j_\ell}-t_{\ell-1}n), (\varphi_{j_m} - t_{m-1}n) \right\}(f) = \begin{cases}
   (\varphi_{j_\ell} - t_{\ell-1}n)(f) & \text{ if } |I \cap H| \le r_{\ell,m} \\
   (\varphi_{j_m} - t_{m-1}n)(f) & \text{ if } |I \cap H| \ge r_{\ell,m}
\end{cases}
\] and \[
\min\left\{(\varphi_{j_\ell}-t_{\ell-1}n), (\varphi_{j_m} - t_{m-1}n) \right\}(f) = \begin{cases}
   (\varphi_{j_\ell} - t_{\ell-1}n)(f) & \text{ if } |I \cap H| \ge r_{\ell,m} \\
   (\varphi_{j_m} - t_{m-1}n)(f) & \text{ if } |I \cap H| \le r_{\ell,m}
\end{cases}.
\] 
Note that these particular maximums and minimums are the resulting maximums and minimums we see if restricting $\TTMAX$ and $\TTMIN$ to being taken over only $\varphi_{j_\ell}$ and $\varphi_{j_m}$ and upholding their relative translations. We give the following definition which corresponds to the case when $|I \cap H| = r_{\ell,m}$. 

\begin{figure}
    \centering
    \includegraphics[height=1.8in]{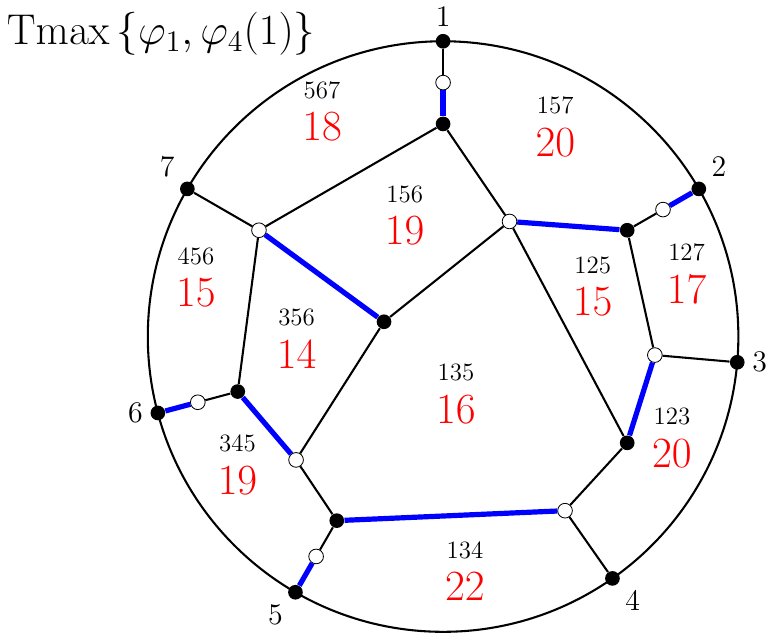} \hspace{5em} \includegraphics[height=1.8in]{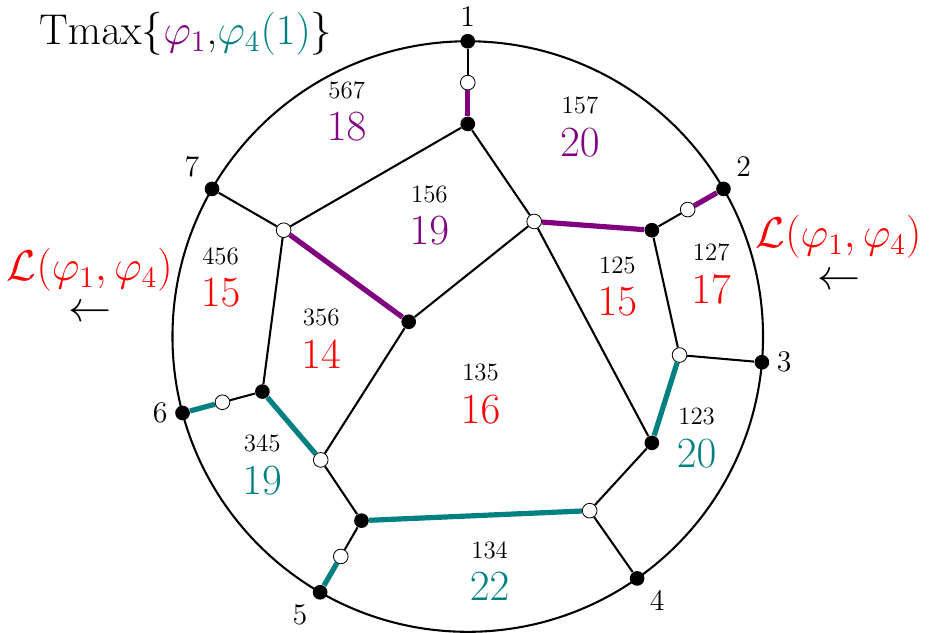}
    \caption{(Left): $\TMAX{\varphi_1,\varphi_4(1)} = \tmax{\varphi_1,\varphi_4,\varphi_5}$ on source-labeled $G_B$; (Right): The land bridge $\mathcal{L}(\varphi_1,\varphi_4)$ (in {\color{red}red}) with the heights and matched edges corresponding to $\varphi_1$ and $\varphi_4$ in {\color{violet}violet} and {\color{teal}teal}, at the top and bottom of $G_B$, respectively.
    }
    \label{fig:tmax14_example}
\end{figure}

\begin{definition}[Land Bridge]
    \label{def:land_bridge_2}
Let $\ell < m$, i.e. $j_\ell < j_m$, and let $\ell' = t_{\ell -1}n$ and $m' = t_{m-1}n$. Then the set \[\mathcal{L}(\varphi_{j_\ell}-\ell',\varphi_{j_m}-m') = \left\{ f \in F(G) \,| \, (\varphi_{j_\ell}-\ell')(f) = (\varphi_{j_m}-m')(f) \right\}
\] is called the \textbf{land bridge} with respect to $\varphi_{j_\ell} - \ell'$ and $\varphi_{j_m} - m'$.
\end{definition}

\begin{figure}[h]
    \centering
    \includegraphics[height=1.8in]{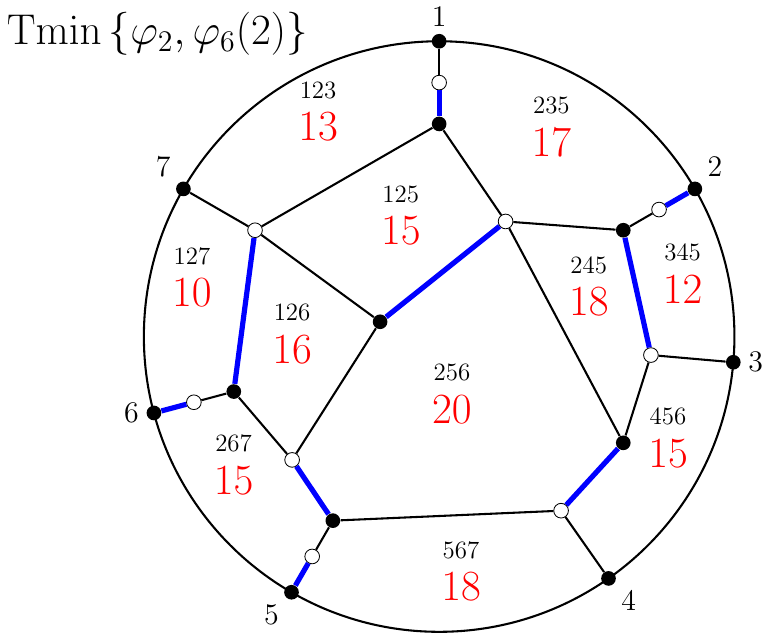} \hspace{5em} \includegraphics[width=2.25in]{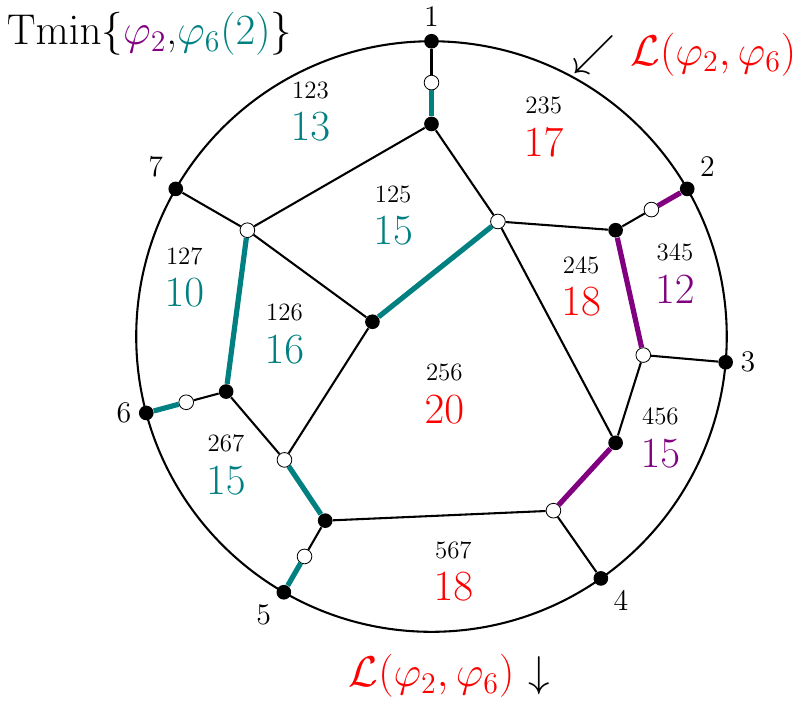}
    \caption{(Left): $\TMIN{\varphi_2,\varphi_6(2)} = \tmin{\varphi_2,\varphi_5,\varphi_6}$ on target-labeled $G_B$; (Right): The land bridge $\mathcal{L}(\varphi_2,\varphi_6)$ (in {\color{red}red}) with the heights and matched edges corresponding to $\varphi_2$ and $\varphi_6$ in {\color{violet}violet} and {\color{teal}teal}, at the bottom right and top left of $G_B$, respectively.}
    \label{fig:tmin26_example}
\end{figure}

When considering land bridges with respect to $\varphi_{j_\ell} - \ell'$ and $\varphi_{j_m} - m'$ when $\varphi_{j_\ell}$ and $\varphi_{j_m}$ appear in $\TTMAX$ or $\TTMIN$, we write this land bridge as $\mathcal{L}(\varphi_{j_\ell},\varphi_{j_m})$ where the notation omits the translations by $\ell'$ and $m'$ but the relative translation $r_{\ell,m}$ is implicit. Suppose $\varphi_{j_\ell}$ and $\varphi_{j_m}$ appear in $\TTMAX$ or in $\TTMIN$ and let their relative translation $r_{\ell,m}$ be given as in the paragraphs preceding. Let $H = [j_m]\setminus[j_\ell]$ and let $f \in F(G)$ have face label $I$. Since $I$ is determined by strands of $G$, then we can reformulate the condition $f \in \mathcal{L}(\varphi_{j_\ell},\varphi_{j_m})$ if and only if $|I \cap H| = r_{\ell,m}$ of \Cref{def:land_bridge_2} as follows. For any index $i \in H$, we say the $i$-strand is a $\mathbf{H}$\textbf{-strand}. We define the $\mathbf{a}$\textbf{th rightmost path} of $H$-strands to be the unique oriented path along all $H$-strands such taht  at any point along this path there are exactly $(a-1)$ of the $H$-strands lying to its right. Now for any $f \in \mathcal{L}(\varphi_{j_\ell},\varphi_{j_m})$, then $f$ lies to the left of exactly $r_{\ell,m}$ of the $H$-strands, that is, $f$ lies between the $r_{\ell.m}$th and $(r_{\ell,m}+1)$th rightmost paths of $H$-strands. Then in the orientation of the $r_{\ell,m}$th and $(r_{\ell,m}+1)$th rightmost paths of $H$-strands, it follows that $\mathcal{L}(\varphi_{j_\ell},\varphi_{j_m})$ is right-bordered by the $r_{\ell,m}$th rightmost path of $H$-strands and left-bordered by the $(r_{\ell,m}+1)$th rightmost path of $H$-strands. Moreover, the orientation of these left- and right-borders endow $\mathcal{L}(\varphi_{j_\ell},\varphi_{j_m})$ with an orientation.

We revisit whether $\varphi_{j_\ell}$ or $\varphi_{j_m}$ contributes to the resulting height at $f \in F(G)$ when restricting $\TTMAX$ and $\TTMIN$ to being taken over only $\varphi_{j_\ell}$ and $\varphi_{j_m}$ while upholding their relative translations. In particular, the prior condition on $f$ can be determined in terms of $f$ relative to the land bridge $\mathcal{L}(\varphi_{j_\ell},\varphi_{j_m})$. We retain the use of $\ell' = t_{\ell -1}n$ and $m' = t_{m-1}n$ in the following. Then we observe

\[
\max\left\{(\varphi_{j_\ell}-\ell'), (\varphi_{j_m} - m') \right\}(f) = \begin{cases}
   (\varphi_{j_\ell} - \ell')(f) & \text{ for $f$ to the right of $\mathcal{L}(\varphi_{j_\ell},\varphi_{j_m})$} \\
   (\varphi_{j_\ell} - \ell')(f) = (\varphi_{j_m} - m')(f) & \text{ for $f \in \mathcal{L}(\varphi_{j_\ell},\varphi_{j_m})$} \\
   (\varphi_{j_m} - m')(f) & \text{ for $f$ to the left of $\mathcal{L}(\varphi_{j_\ell},\varphi_{j_m})$}
\end{cases}
\] and \[
\min\left\{(\varphi_{j_\ell}-\ell'), (\varphi_{j_m} - m') \right\}(f) = \begin{cases}
   (\varphi_{j_\ell} - \ell')(f) & \text{ for $f$ to the left of $\mathcal{L}(\varphi_{j_\ell},\varphi_{j_m})$} \\
   (\varphi_{j_\ell} - \ell')(f) = (\varphi_{j_m} - m')(f) & \text{ for $f \in \mathcal{L}(\varphi_{j_\ell},\varphi_{j_m})$} \\
   (\varphi_{j_m} - m')(f) & \text{ for $f$ to the right of $\mathcal{L}(\varphi_{j_\ell},\varphi_{j_m})$}
\end{cases}.
\] 

This, in particular, yields for all choices $0 \le j_1 < j_2 < n$ and $\vec{a} = (a_1,a_2)$ that $\TMAX{\varphi_{j_1},\varphi_{j_2}(a_1)}$ and $\TMIN{\varphi_{j_1},\varphi_{j_2}(a_1)}$ are tight height functions. Moreover, applying \Cref{lem:passing_lemma}, the matching corresponding to $\TMAX{\varphi_{j_1},\varphi_{j_2}(a_1)}$ contains all black-to-white edges on the right-border of $\mathcal{L}(\varphi_{j_1},\varphi_{j_2})$ and all white-to-black edges on the left-border of $\mathcal{L}(\varphi_{j_1},\varphi_{j_2})$. Analogously, applying \Cref{lem:passing_lemma}, the matching corresponding to $\TMIN{\varphi_{j_1},\varphi_{j_2}(a_1)}$ contains all white-to-black edges on the right-border of $\mathcal{L}(\varphi_{j_1},\varphi_{j_2})$ and all black-to-white edges on the left-border of $\mathcal{L}(\varphi_{j_1},\varphi_{j_2})$. \Cref{fig:tmax14_example} and \Cref{fig:tmin26_example} gives examples of a $\TTMAX$ and $\TTMIN$ of fundamental heights as well as distinguishing their land bridges. Thus, we have sufficient evidence to prove the specific cases of $\TMAX{\varphi_{j_1},\varphi_{j_2}(a_1)}$ and $\TMAX{\varphi_{j_1},\varphi_{j_2}(a_1)}$ correspond to minimal and maximal matchings on $G$, respectively, although we will prove this in greater generality. We instead give the following lemmas.

\begin{lemma}
    \label{lem:tmax_k_funds}
For $G$ of type $(k,n)$, any $\TTMAX$ can be expressed with exactly $k$ fundamental height functions with consecutive relative translations $a_i = 1$ for all $i$. 
\end{lemma}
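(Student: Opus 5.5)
The plan is to expand each compressed entry $\varphi_{j_{i+1}}(a_i)$ with $a_i>1$ into a block of $a_i$ consecutive translated fundamental heights. We then check that the inserted terms never exceed the maximum already present, so they do not change the function. Concretely, given $\TTMAX = \max\{\varphi_{j_1}, \varphi_{j_2}-t_1n,\ldots,\varphi_{j_p}-t_{p-1}n\}$ with $t_0=0$, we need to fill the translation levels $t_{i-1}n+n,\ldots,t_in-n$ strictly between consecutive used levels. We also need to fill the levels $t_{p-1}n+n,\ldots,(k-1)n$ after the last one. The aim is to produce $k$ fundamental heights $\varphi_{j'_1},\ldots,\varphi_{j'_k}$ with $\TTMAX=\tmax{\varphi_{j'_1},\ldots,\varphi_{j'_k}}$.

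The key choice is to fill the gap between $\varphi_{j_i}$ (at level $t_{i-1}$) and $\varphi_{j_{i+1}}$ (at level $t_i$) with repeated copies of $\varphi_{j_i}$. That is, we insert $\varphi_{j_i}-sn$ for $t_{i-1}<s<t_i$. Since $\varphi_{j_i}-sn<\varphi_{j_i}-t_{i-1}n$ pointwise, these terms are dominated and the maximum is unchanged. At the tail we insert $\varphi_{j_p}-sn$ for $t_{p-1}<s\le k-1$, which are again dominated. This gives exactly $t_{p-1}+1+(k-1-t_{p-1})=k$ terms at levels $0,1,\ldots,k-1$, hence the relative translations are all $1$, and by \Cref{rem:Ts_equal_ts} the result is a $\tmax{\cdot}$ of $k$ fundamental heights.

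There is one subtlety. The statement speaks of fundamental height functions, and the paper's convention (preceding the land bridge discussion) takes the indices increasing, $j_\ell<j_m$. Repeating $\varphi_{j_i}$ produces weakly increasing indices, which may conflict with a requirement of distinct indices. If distinctness is required, the fallback is to use the identity $\varphi_{j}(f)\le \varphi_{j'}(f)\le \varphi_j(f)+n$ for $j\le j'$, which follows because passing from the $(j+1)$- to the $(j'+1)$-ordering adds $n$ to at most all $k$ entries. In fact it adds $n\cdot|I\cap([j']\setminus[j])|$, so the bound is $\varphi_{j'}\le\varphi_j+|I\cap(j,j']|\,n$. With this, an intermediate $\varphi_{j'}-sn$ for $j_i\le j'\le j_{i+1}$ and $t_{i-1}<s<t_i$ is compared against both neighbours. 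It is dominated by $\varphi_{j_i}-t_{i-1}n$ when $|I\cap(j_i,j']|\le s-t_{i-1}$. Otherwise $|I\cap(j',j_{i+1}]|$ is small relative to $t_i-s$, and it is dominated by $\varphi_{j_{i+1}}-t_in$.

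The main obstacle is exactly this second alternative: showing that every face falls into one of the two domination regimes. This amounts to the counting inequality $|I\cap(j_i,j_{i+1}]|$ versus $a_i$, in the spirit of the land bridge analysis. I would first try the simple repetition argument. I would fall back on the land bridge description only if distinct indices are demanded.
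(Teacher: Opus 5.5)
Your main argument is correct for the statement as written, but it takes a different route from the paper. You fill the empty levels strictly between $t_{i-1}$ and $t_i$ (and after $t_{p-1}$) with repeated copies $\varphi_{j_i}-sn$, and their domination is trivial. The paper instead fills those levels with \emph{consecutive} fundamental heights $\varphi_{j_i+1}-(t_{i-1}+1)n,\dots,\varphi_{j_i+a_i-1}-(t_{i-1}+a_i-1)n$. It relies on $\max\{\varphi_j,\varphi_{j+1}-n,\dots,\varphi_{j+\ell}-\ell n\}=\varphi_j$, which holds because $\varphi_{j+s}=\varphi_j+n\,|I\cap(j,j+s]|\le\varphi_j+sn$.

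Your version is more elementary, and it is exactly the repeated-copy expression the paper itself writes ``by a slight abuse of notation'' in \Cref{ssec:extreme_match}. The paper's version aims at distinct indices, which is what \Cref{prop:tmax_decomp_prop} presupposes ($j_1<\cdots<j_k$).

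This also means your fallback is much easier than you expected. The relation you wrote is really an equality, $\varphi_{j'}=\varphi_j+n\,|I\cap(j,j']|$; the intermediate bound $\varphi_{j'}\le\varphi_j+n$ is false in general. Choosing $j'=j_i+(s-t_{i-1})$ forces $|I\cap(j_i,j']|\le s-t_{i-1}$ on every face. So the left neighbour dominates everywhere, and no second regime or land-bridge analysis is needed. As written, your second regime also has the inequality reversed: $\varphi_{j'}-sn\le\varphi_{j_{i+1}}-t_in$ requires $|I\cap(j',j_{i+1}]|\ge t_i-s$.

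Finally, neither route yields strictly increasing indices in $\{0,\dots,n-1\}$ in every case. Consecutive filling reaches $j_{i+1}$ when $a_i>j_{i+1}-j_i$, and it passes $n-1$ at the tail when $j_p+a_p-1\ge n$. The paper's proof does not treat these cases. Full distinctness therefore requires discarding dominated terms, or working cyclically up to a uniform translation.
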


\begin{proof}
Recall $\TTMAX = \TMAX{\varphi_{j_1}, \varphi_{j_2}(a_1), \ldots, \varphi_{j_p}(a_{p-1})}$ for any arbitrary ordered collection of $p$ fundamental height functions with consecutive relative translations $a_i$. We first show that $\TTMAX$ can be expressed with consecutive relative translations $a_i' = 1$ for all $i$. Observe that for any $\varphi_{j_i}$ we have \[\max\left\{\varphi_{j_i},(\varphi_{j_i+1}-n), (\varphi_{j_i +2}-2n), \ldots, (\varphi_{j_i + \ell}-\ell n)\right\} = \varphi_{j_i}.\] In particular choosing $\ell = a_i - 1$ for each $i$, we have \[\TTMAX = \tmax{\varphi_{j_1}, \ldots, \varphi_{j_1+(a_1-1)},\varphi_{j_2},\ldots, \varphi_{j_p},\varphi_{j_p+1},\ldots, \varphi_{j_p+(a_p -1)}}\] where it remains that the relative translations between $\varphi_{j_i}$ and $\varphi_{j_{i+1}}$ is exactly $a_i$ for all $i$, but this $\TTMAX$ is now expressed with all consecutive relative translation $a_i' = 1$. In this case, we reindex the fundamental height functions as follows \begin{align*}
    \TTMAX &= \tmax{\varphi_{j_1}, \ldots,\varphi_{j_2},\ldots, \varphi_{j_p}, \ldots, \varphi_{j_p+(a_p -1)}} \\
    &= \tmax{\varphi_{j_1}, \varphi_{j_2'},\ldots, \varphi_{j_p'}}.
\end{align*} Now if $p' > k$, say $p' - k = r$, then for all $1 \le i \le r$ we note $(\varphi_{j_{k+i}} - (k+i-1)n)(f) < \TTMAX(f)$ for all $f \in F(G)$, so no such fundamental height $\varphi_{j_{k+i}}$ contributes to $\TTMAX$. Thus, we now assume $p' \le k$, but from our initial argument showing that $\TTMAX$ can be expressed with consecutive relative translations $a_i' =1$ for all $i$, we can extend our $\TTMAX$ over $p'$ fundamental height functions to a $\TTMAX$ over exactly $k$ fundamental height functions and our proof is complete. 
\end{proof}

\begin{lemma}
    \label{lem:tmin_k_funds}
For $G$ of type $(k,n)$, any $\TTMIN$ can be expressed with exactly $k$ fundamental height functions with consecutive relative translations $a_i = 1$ for all $i$. 
\end{lemma}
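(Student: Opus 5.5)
We mirror the proof of \Cref{lem:tmax_k_funds}, replacing maxima by minima. The key identity is the minimum analogue of the padding identity used there. For every face $f$ we have $\varphi_{j+1}(f) - \varphi_j(f) \in \{0,n\}$: the only index whose contribution changes is $j+1$, which gains $n$ exactly when $j+1 \in f$. Consequently
\[
\varphi_{j}(f) - n \le \varphi_{j+1}(f) - n \le \varphi_j(f) ,
\]
and iterating gives
\[
\min\left\{\varphi_{j_i-\ell}, (\varphi_{j_i-\ell+1}-n), \ldots, (\varphi_{j_i}-\ell n)\right\} = \varphi_{j_i}-\ell n ,
\]
since each successive term $\varphi_{j+1}-(s+1)n$ is at most $\varphi_j - sn$. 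So for $\TTMIN$ the padding must be inserted \emph{before} each $\varphi_{j_{i+1}}$ rather than after $\varphi_{j_i}$. In the $\TTMAX$ case the opposite identity held: $\varphi_{j_i+s}-sn \le \varphi_{j_i}$, so padding after $\varphi_{j_i}$ was harmless.

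Concretely, a relative translation $a_i>1$ between $\varphi_{j_i}$ (translated by $t_{i-1}n$) and $\varphi_{j_{i+1}}$ (translated by $t_in$) is handled by inserting the fundamental heights $\varphi_{j_{i+1}-a_i+1}, \ldots, \varphi_{j_{i+1}-1}$ with translations $(t_{i-1}+1)n, \ldots, (t_i-1)n$. By the identity above, each inserted term is at least $\varphi_{j_{i+1}}-t_in$ pointwise, so none of them alters the minimum. This yields a list in which all consecutive relative translations equal $1$.

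Two leftover issues remain, handled as in the $\TTMAX$ proof.

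\textbf{Too many terms.} If the resulting list has $p'>k$ terms, the terms with translation at least $kn$ never contribute to the minimum. Indeed $\varphi_{j}(f) \in [\,\varphi_0(f), \varphi_0(f)+kn\,]$ for every $j$, because each of the $k$ label entries gains at most $n$. Hence $\varphi_{j'}-kn \le \varphi_j$ pointwise for the first term $\varphi_j$, and this comparison must be checked against the actual first term. I expect this to be the delicate step. A direct check: with shifted labels $i'_\ell \in (j, j+n]$, the shift from $\varphi_j$ to $\varphi_{j'}$ changes the sum by at most $kn$. Equality can occur, but that only means the extra term ties the minimum without changing it, so it may be dropped.

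\textbf{Too few terms.} If $p'<k$, we pad at the end with terms that do not lower the minimum. Here the $\TTMIN$ case genuinely differs from $\TTMAX$, since appending $\varphi_{j_{p'}+s}-sn$ could lower a minimum. Instead we append copies of the form $\varphi_{j_1}-(p'+s)n$ wrapped cyclically, or equivalently prepend terms using indices taken modulo $n$ with the convention $\varphi_{j-n} = \varphi_j - kn$-type shifts. The main obstacle is verifying that such padding does not lower the minimum. I would first try to show that any term $\varphi_{j}-tn$ with $t \ge k$ lies below the others, and would restructure the argument to absorb the extra freedom into the first terms: because the translations satisfy $t_{p-1}<k$, the index gaps can always be redistributed. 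As a last resort I would reduce via the $\TTMAX$/$\TTMIN$ duality under the reflection $f\mapsto [n]\setminus f$ (reversing orientation), which converts $\TTMIN$ into a $\TTMAX$, and then apply \Cref{lem:tmax_k_funds}.
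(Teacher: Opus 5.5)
Your inner padding is correct and matches what the paper intends. Since $\varphi_{j+1}-n\le\varphi_j$, a run $\varphi_{j-\ell},\varphi_{j-\ell+1}-n,\ldots,\varphi_j-\ell n$ has minimum $\varphi_j-\ell n$, so the $a_i-1$ fillers must go \emph{before} $\varphi_{j_{i+1}}$; these are the paper's ``least indexed'' heights. The gap is in what remains.

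After this step the list has exactly $t_{p-1}+1=k-a_p+1\le k$ terms, so your ``too many terms'' case never arises. That is fortunate, because the argument you give for it is backwards. For a minimum, $\varphi_{j'}-kn\le\varphi_j$ says the extra term sits \emph{below} the first term, which is exactly what lets it lower the minimum. Moreover, for $j<j'$ the inequality is strict unless $f\subseteq(j,j']$, so it is not just a tie.

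The case that always occurs when $a_p>1$ is your ``too few terms'' case, and there the decisive verification is left open. None of your proposed fixes works as written:
\begin{itemize}
\item Appending $\varphi_{j_1}-(p'+s)n$ produces a term strictly below $\TTMIN$ at every face, because each original term satisfies $\varphi_{j_i}-t_{i-1}n\ge\varphi_{j_1}-t_{p-1}n$.
\item Proving that terms with translation at least $kn$ ``lie below the others'' would establish the opposite of what you need.
\item The complement duality is not developed in the paper and is not needed.
\end{itemize}

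The missing idea is that you need not keep $\varphi_{j_1}$ untranslated. A $\TTMIN$ matters only through its matching, and a tight height function is determined by its matching only up to a uniform additive constant. This is the ``slight abuse of notation'' in the paper's identity where $\varphi_{j_1}$ appears $a_p$ times at the front.

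So \emph{prepend} $\varphi_{j_1-a_p+1},\ldots,\varphi_{j_1-1}$ in front of $\varphi_{j_1}$; this raises every existing translation by $(a_p-1)n$. Your own run identity with $\ell=a_p-1$ shows the new terms are pointwise at least $\varphi_{j_1}-(a_p-1)n$. Hence the resulting $\Tmin$ over exactly $k$ terms, with all relative translations $1$, equals $\TTMIN-(a_p-1)n$ and gives the same matching.

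If a filler index is negative (which can also happen in your inner step), use $\varphi_{j-n}=\varphi_j-kn$ and move those terms to the end of the list. This again changes the function only by a uniform constant. This prepending is what the paper's one-line proof means by adding the least indexed fundamental height functions.
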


\begin{proof}
Proof of this statement is analogous to the proof of \Cref{lem:tmax_k_funds} with the change that we can add or remove the least indexed fundamental height functions appearing in $\TTMIN$ to achieve the same result. 
\end{proof}

We fix the notation $\Tmax = \tmax{\varphi_{j_1},\ldots, \varphi_{j_k}}$ and $\Tmin = \tmin{\varphi_{j_1},\ldots, \varphi_{j_k}}$ and study the particular case where consecutive relative translations $a_i =1$ and each $\Tmax$ and $\Tmin$ is taken over \textit{exactly} $k$ fundamental heights. We now proceed to main results of this subsection.

\begin{definition}
\label{def:fund_height_region}
Given $\Tmax$, the \textbf{fundamental height region},  $F_{j_i} \subseteq F(G)$, is the collection of all faces for which $\varphi_{j_i}$ uniquely contributes to $\Tmax$, i.e. $(\varphi_{j_i}-(i-1)n)(f) > (\varphi_{j_\ell} - (\ell-1) n)(f)$ for all $f \in F_{j_i}$ and $\ell \ne i$.

Analogously, given $\Tmin$, the \textbf{fundamental height region},  $F_{j_i} \subseteq F(G)$, is the collection of all faces for which $\varphi_{j_i}$ uniquely contributes to $\Tmin$, i.e. $(\varphi_{j_i} - (i-1)n)(f) < (\varphi_{j_\ell} - (\ell-1) n)(f)$ for all $f \in F_{j_i}$ and $\ell \ne i$.

For a fundamental height region $F_{j_i}$, we denote its corresponding (unoriented) boundary path by $P_{j_i} \subseteq E(G)$.
\end{definition}

The motivation for considering such $F_{j_i}$ and $P_{j_i}$ is that for $\Tmax$ and $\Tmin$ we can distinguish the regions of $G$ for which some fundamental height function in $\Tmax$ or $\Tmin$  uniquely contributes to $\Tmax$ or $\Tmin$, respectively. Then the case study between \Cref{def:land_bridge_2} and \Cref{lem:tmax_k_funds} is a special case of the following propositions which yield that $\Tmax$ and $\Tmin$ are tight height functions.

\begin{proposition}
\label{prop:tmax_decomp_prop} 
Let $j_1 < \cdots < j_k$ and $\Tmax = \tmax{\varphi_{j_1},\ldots, \varphi_{j_k}}$. Then $\Tmax$ is a tight height function. Further, $F_{j_i}$ occurs to the right of all $\mathcal{L}(\varphi_{j_i},\varphi_{j_m})$ for all $m > i$ and to the left of all $\mathcal{L}(\varphi_{j_\ell},\varphi_{j_i})$ for all $\ell < i$. Moreover, in orienting $P_{j_i}$ clockwise about $F_{j_i}$, the matching corresponding to $\Tmax$ contains all black-to-white edges on $P_{j_i}$ for all $i$.
\end{proposition}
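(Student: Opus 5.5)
The plan is to reduce everything to the two-function analysis already carried out after \Cref{def:land_bridge_2}, and then assemble. Write $\psi_i = \varphi_{j_i} - (i-1)n$, so that $\Tmax = \max_i \psi_i$. For each pair $\ell < m$ the relative translation is $r_{\ell,m} = m-\ell$. The displayed case analysis then says that $\psi_\ell \ge \psi_m$ exactly on faces with $|I \cap ([j_m]\setminus[j_\ell])| \le m-\ell$, i.e. to the right of (or on) $\mathcal{L}(\varphi_{j_\ell},\varphi_{j_m})$. The reverse inequality holds to the left. Since $f \in F_{j_i}$ means $\psi_i > \psi_m$ and $\psi_i > \psi_\ell$ for all other indices, the positional claim follows at once. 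A face lies in $F_{j_i}$ exactly when it is strictly right of every $\mathcal{L}(\varphi_{j_i},\varphi_{j_m})$ with $m>i$ and strictly left of every $\mathcal{L}(\varphi_{j_\ell},\varphi_{j_i})$ with $\ell<i$. So $F(G)$ is partitioned into the regions $F_{j_i}$ together with faces lying on land bridges, where ties occur.

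For tightness, I will argue edge by edge. Take $e=(b,w)$ separating $f_i$ (directly upstream) and $f_j$ (directly downstream). If some single $\psi_s$ attains the maximum at both faces, then $\Tmax(f_i)-\Tmax(f_j)=\psi_s(f_i)-\psi_s(f_j)$. This equals the correct tight difference because $\varphi_{j_s}$ is tight by \Cref{prop:fund_is_tight_unique}.

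So the real work is at edges where the maximizer changes, i.e. $f_i$ and $f_j$ are attained by different $\psi_s,\psi_t$ and no common maximizer exists. In that case I claim $e$ lies on a strand in $[j_t]\setminus[j_s]$ crossing a land bridge border. I will also use that two fundamental heights differ across an edge only when one of the two strands through $e$ changes its side under the ordering shift, which is the content of \Cref{lem:passing_lemma}. Combining this with $\psi_s(f)\ge\psi_t(f)$ on one side and $\le$ on the other, and noting that differences of consecutive values of the $\psi$'s across an edge differ by $0$ or $\pm n$, one should show that a common maximizer always exists for adjacent faces. This step is the main obstacle. The idea is that $\psi_s - \psi_t$ changes by at most $n$ across an edge, and it is a multiple of $n$ since the $\varphi$'s agree mod $n$. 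So if $\psi_s(f_i)>\psi_t(f_i)$ and $\psi_t(f_j)>\psi_s(f_j)$, the jump would be at least $2n$, which is impossible. Hence on adjacent faces the maximizers overlap, which is the key fact. I would first try to prove the jump bound directly from \Cref{def:tight_2}, since tight differences across $e$ always lie in an interval of length $n$. With adjacency covered, tightness of $\Tmax$ follows from the first case. That $\Tmax$ is a height function is part of tightness, and almost monotonicity at each $w$ follows from the same local computation.

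Finally, for the matching along $P_{j_i}$, orient $P_{j_i}$ clockwise about $F_{j_i}$. Each edge of $P_{j_i}$ lies on a border of some land bridge $\mathcal{L}(\varphi_{j_i},\varphi_{j_m})$ or $\mathcal{L}(\varphi_{j_\ell},\varphi_{j_i})$. By the first paragraph, $F_{j_i}$ is right of the former and left of the latter. The matching is determined locally by whichever $\psi$ is common to both faces. Apply \Cref{lem:passing_lemma}, iterated over the strands of $[j_m]\setminus[j_i]$, as in the two-function case preceding \Cref{lem:tmax_k_funds}. This shows the matching contains the black-to-white edges on the right border of each land bridge and the white-to-black edges on its left border, with orientations taken along the strands. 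Converting both to the clockwise orientation around $F_{j_i}$ gives black-to-white edges in each case: the right border is traversed forward and the left border backward. This yields the last claim.
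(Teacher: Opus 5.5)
Your positional claim and your treatment of the matching along $P_{j_i}$ follow the paper's proof. The paper also cuts out $F_{j_i}$ by intersecting the two-function comparisons. It also gets the black-to-white statement from the right-/left-border behaviour of land bridges (via \Cref{lem:passing_lemma}), together with the fact that clockwise traversal of $P_{j_i}$ runs along right borders forwards and left borders backwards.

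The genuine difference is the tightness step. The paper argues by gluing. It asserts that $\varphi_{j_i}$ still attains $\Tmax$ on every face across $P_{j_i}$, so $\Tmax$ is tight on each $F_{j_i}\cup\{g_i\}$, and then pieces these regions together. You instead prove an edge-local lemma. For $s<t$ you have $\psi_s-\psi_t=n\bigl((t-s)-|I\cap([j_t]\setminus[j_s])|\bigr)\in n\mathbb{Z}$, and the label swap $S\cup\{a\}\leftrightarrow S\cup\{b\}$ across an edge changes $|I\cap([j_t]\setminus[j_s])|$ by at most one. Hence no pair $\psi_s,\psi_t$ can strictly reverse across an edge, and adjacent faces always share a maximizer.

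Your lemma buys more than the gluing picture. It treats every edge uniformly, including edges between two tie faces adjacent to no $F_{j_i}$ (the phenomenon of \Cref{rem:decomp_internal_edges}), which the gluing argument never addresses. It also justifies a fact your last paragraph needs and the paper only asserts: every face across $P_{j_i}$ lies on a land bridge, with $\psi_i$ still maximal there. In exchange, the paper's picture gives the region-by-region description that is reused in \Cref{thm:tmax_tmin_are_extremal} and \Cref{thm:main}.

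One step should be written out rather than attributed to ``the same local computation.'' Edgewise agreement with tight functions does not by itself make $\Tmax$ a height function. Different edges around one vertex may use different maximizers, and \Cref{def:tight_2} refers to the matching of $\Tmax$ itself. The gap closes quickly:
\begin{itemize}
\item For each edge, the two admissible values of (upstream minus downstream) differ by exactly $n$, and the larger, positive one means ``matched.''
\item Around an internal vertex, crossing every edge from its upstream face to its downstream face, these differences telescope to $0$.
\item Compare with a single $\varphi_j$, which has exactly one matched edge at that vertex. This forces exactly one edge of $\Tmax$ to take the larger value, which is precisely almost monotonicity with that edge matched.
\end{itemize}
Your tentative remark about $e$ lying on a strand of $[j_t]\setminus[j_s]$ can be dropped, since you show that case never occurs.
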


\begin{proof}
We begin in showing that the fundamental height regions on $G$ are cut out by the land bridges as claimed. Fix any $\varphi_{j_i}$ in $\Tmax$ and fix the ordering on indices $1 \le \ell < i <  m \le k$. For each $m$, we consider $\mathcal{L}(\varphi_{j_i},\varphi_{j_m})$ with respect to the relative translation between $\varphi_{j_i}$ and $\varphi_{j_m}$ in $\Tmax$. Then $\varphi_{j_i}$ may only uniquely contribute to $\Tmax$ on faces $f \in F(G)$ which lie to the right of $\mathcal{L}(\varphi_{j_i},\varphi_{j_m})$. Similarly, for each $\ell$, we consider $\mathcal{L}(\varphi_{j_\ell},\varphi_{j_i})$  with respect to the relative translation between $\varphi_{j_\ell}$ and $\varphi_{j_i}$ in $\Tmax$. Then $\varphi_{j_i}$ may only uniquely contribute to $\Tmax$ on faces $f \in F(G)$ which lie to the left of $\mathcal{L}(\varphi_{j_\ell},\varphi_{j_i})$. Ranging over all such $\ell$ and $m$ and intersecting these faces gives that $\varphi_{j_i}$ contributes uniquely to $\Tmax$ on the region of $G$ cut out by land bridges with respect $\varphi_{j_i}$ and all other fundamental heights in $\Tmax$. Thus, the $F_{j_i}$ are cut out by land bridges as claimed.

We now show that $\Tmax$ is a tight height function. Notice that since each $F_{j_i}$ is cut out by land bridges with respect to $\varphi_{j_i}$ and all other fundamental heights in $\Tmax$, then $\varphi_{j_i}$ contributes non-uniquely to the height $\Tmax(g)$ for all $g \in F(G)$ which are separated from $F_{j_i}$ by $P_{j_i}$. Let $\{g_i\}$ be the set of all such faces separated from $F_{j_i}$ by $P_{j_i}$. Then since $\varphi_{j_i}$ contributes to $\Tmax$ on $F_{j_i} \cup \{g_i\}$, then $\Tmax$ is a tight height function on this region. As this holds for all $i \in [k]$, then for each $i$ the collection $\{g_i\}$ glues together $F_{j_i}$ with other $F_{j_\ell}$ whenever $\{g_i\} \cap \{g_\ell\} \ne \emptyset$. Thus, $\Tmax$ is built from piecing together fundamental height functions and so $\Tmax$ is also a tight height function. 

Lastly, we can study the matching corresponding to $\Tmax$ since $\Tmax$ is a height function. Note that from the study of $F_{j_i}$ by land bridges with respect to $\varphi_{j_i}$ and all other fundamental heights in $\Tmax$, then also matched edges along $P_{j_i}$ are determined by matched edges coming from right- and left-borders of these land bridges. In particular, for each $m > i$, $F_{j_i}$ lies to the right of each $\mathcal{L}(\varphi_{j_i},\varphi_{j_m})$ and the matching along $P_{j_i}$ contains a subset of matched edges along the right-borders of all such land bridges. These matched edges are all black-to-white edges with respect to the orientation of these land bridges. For each $\ell < i$, $F_{j_i}$ lies to the left of each $\mathcal{L}(\varphi_{j_\ell},\varphi_{j_i})$ and the matching along $P_{j_i}$ contains a subset of matched edges along the left-borders of all such land bridges. These matched edges are all white-to-black edges  with respect to the orientation of these land bridges. Orienting $P_{j_i}$ clockwise about $F_{j_i}$ is consistent with the orientations of $\mathcal{L}(\varphi_{j_i},\varphi_{j_m})$ for all $m$ and opposite the orientations of $\mathcal{L}(\varphi_{j_\ell},\varphi_{j_i})$ for all $\ell$. Thus, the matching corresponding to $\Tmax$ contains all black-to-white edges of $P_{j_i}$ in this orientation. 
\end{proof}

With respect to $\Tmin$'s, we have the following statement, which holds from a symmetric argument.

\begin{proposition}
\label{prop:tmin_decomp_prop}
Let $j_1 < \cdots < j_k$ and $\Tmin = \tmin{\varphi_{j_1},\ldots, \varphi_{j_k}}$. Then $\Tmin$ is a tight height function. Further, $F_{j_i}$ occurs to the left of all $\mathcal{L}(\varphi_{j_i},\varphi_{j_m})$ for all $m > i$ and to the right of all $\mathcal{L}(\varphi_{j_\ell},\varphi_{j_i})$ for all $\ell < i$. Moreover, in orienting $P_{j_i}$ clockwise about $F_{j_i}$, the matching corresponding to $\Tmin$ contains all white-to-black edges on $P_{j_i}$ for all $i$.
\end{proposition}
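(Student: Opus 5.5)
We mirror the proof of \Cref{prop:tmax_decomp_prop}, exchanging $\max$ with $\min$ and left with right throughout. The input is the two-height comparison recorded before \Cref{lem:tmax_k_funds}. For $\ell<m$, with $\ell'=(\ell-1)n$ and $m'=(m-1)n$, the function $\min\{\varphi_{j_\ell}-\ell',\varphi_{j_m}-m'\}$ equals $\varphi_{j_\ell}-\ell'$ on faces to the left of $\mathcal{L}(\varphi_{j_\ell},\varphi_{j_m})$ and $\varphi_{j_m}-m'$ on faces to the right of it. The relevant statistic is $|I\cap H|$ compared with $r_{\ell,m}$, where $H=[j_m]\setminus[j_\ell]$.

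\textbf{Step 1: locating the regions.} Fix $i$. For every $m>i$, $\varphi_{j_i}$ can win the minimum against $\varphi_{j_m}$ only strictly to the left of $\mathcal{L}(\varphi_{j_i},\varphi_{j_m})$. For every $\ell<i$, it can win against $\varphi_{j_\ell}$ only strictly to the right of $\mathcal{L}(\varphi_{j_\ell},\varphi_{j_i})$. Intersecting these half-regions over all $\ell$ and $m$ gives exactly $F_{j_i}$, which is the claimed location statement.

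\textbf{Step 2: tightness.} Let $g$ be a face across $P_{j_i}$ from $F_{j_i}$. Then $g$ lies on one of these land bridges, so $\varphi_{j_i}-(i-1)n$ still attains the minimum at $g$. Hence on $F_{j_i}\cup\{g\}$ the function $\Tmin$ agrees with a translate of the tight function $\varphi_{j_i}$ (\Cref{prop:fund_is_tight_unique}).

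Tightness is a local condition on each edge, namely on the difference across the two faces separated by that edge (\Cref{def:tight_2}). Every edge has both of its faces inside a single closure $F_{j_i}\cup\{g\}$: either an interior edge of some $F_{j_i}$, an edge of some $P_{j_i}$, or an edge interior to a land bridge. On that closure $\Tmin$ is a translate of one fundamental height, so the edge condition holds there. Almost monotonicity at each white vertex then follows the same way, because all faces around $w$ lie in one such closure.

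This is the main obstacle. The gap is showing that the faces around a single vertex, or at an edge where several land bridges meet, all share a common minimizing $\varphi_{j_i}$, rather than having several fundamental heights tie along different faces. I would handle it with \Cref{lem:common_face_indexing}. Around $w$, the faces differ only in one index $j_s$. Thus $|I\cap H|$ changes by at most one between them, so at most two consecutive translated heights compete there, and they agree on the land-bridge face.

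\textbf{Step 3: the matching.} Edges of $P_{j_i}$ come from borders of land bridges. By \Cref{lem:passing_lemma}, at a land bridge $\mathcal{L}(\varphi_{j_i},\varphi_{j_m})$ with $m>i$, the $\Tmin$ matching uses the black-to-white edges on the left border, in the land bridge's orientation; this is the dual of the $\Tmax$ case. $F_{j_i}$ lies to the left of this bridge, and orienting $P_{j_i}$ clockwise about $F_{j_i}$ reverses that orientation, so these edges become white-to-black.

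For $\ell<i$, $F_{j_i}$ lies to the right of $\mathcal{L}(\varphi_{j_\ell},\varphi_{j_i})$. There the matching uses the white-to-black edges of the right border, and the clockwise orientation about $F_{j_i}$ agrees with the bridge orientation. So in both cases $P_{j_i}$ carries exactly its white-to-black edges in the clockwise orientation.
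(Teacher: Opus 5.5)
Your proposal is the paper's argument: the paper proves this proposition only by symmetry with \Cref{prop:tmax_decomp_prop}, and your Steps 1--3 are exactly that mirror image. The land bridges cut out the regions with left and right exchanged, locally tight pieces are glued, and the border-orientation bookkeeping is the same, which you carry out correctly. The one fix concerns your extra justification of the gluing step, which the paper only asserts: the claim that ``at most two consecutive translated heights compete'' around $w$ is false (in the top cell of $\mathrm{Gr}(3,n)$ with $(j_1,j_2,j_3)=(1,2,3)$, all three of $\varphi_1$, $\varphi_2-n$, $\varphi_3-2n$ equal $6+n$ on the face $123$, and non-consecutive ties can also occur), but your observation that $|I\cap H|$ changes by at most one around $w$ already suffices, since it shows that for each pair $\ell<m$ one of $\varphi_{j_\ell}-(\ell-1)n$, $\varphi_{j_m}-(m-1)n$ is weakly below the other on every face around $w$, so by transitivity a single index attains $\Tmin$ on all faces around $w$, giving almost monotonicity at $w$ and, because every edge has a white endpoint, tightness edge by edge via \Cref{prop:fund_is_tight_unique}.
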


\begin{figure}[h]
    \centering
    \includegraphics[width=0.45\linewidth]{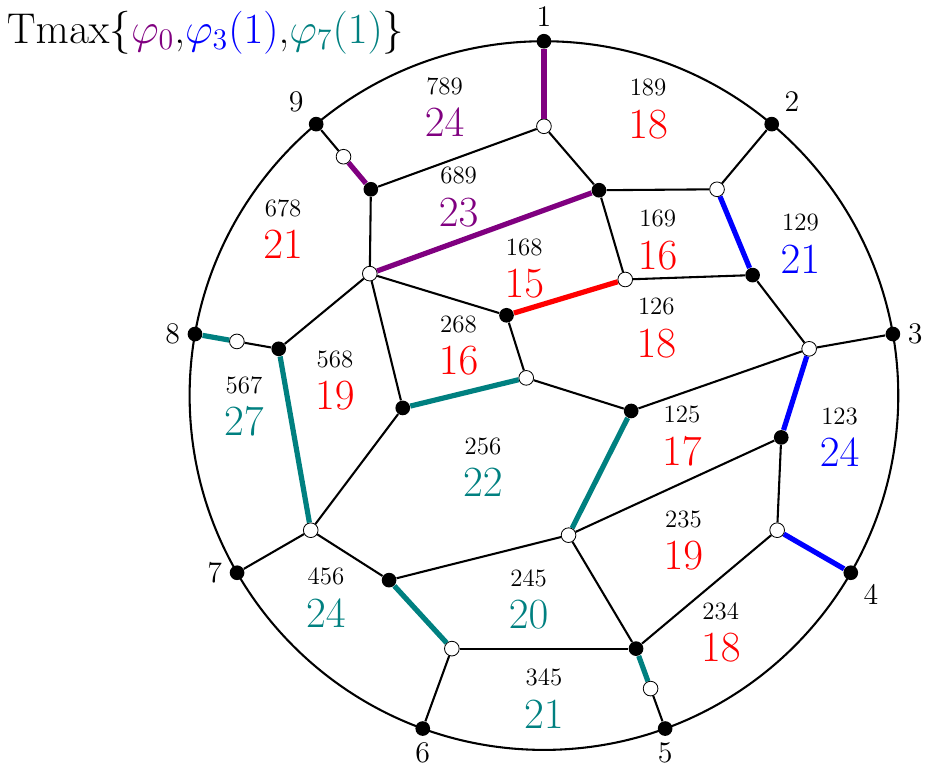}
    \caption{The matching $\TMAX{\varphi_0,\varphi_3(1),\varphi_7(1)} = \tmax{\varphi_0,\varphi_3,\varphi_7}$ on a source-labeled plabic graph corresponding to the top-cell of $\mathrm{Gr}(3,9)$. The fundamental height regions $F_0$, $F_3$, and $F_7$ are designated by {\color{violet} violet}, {\color{blue} blue}, and {\color{teal} teal}, in the top center, center right, and bottom left, respectively. In particular, the {\color{red} red} edge in this matching does not belong to any $F_j$.}
    \label{fig:fundheightbug}
\end{figure}

\begin{remark}
    \label{rem:decomp_internal_edges}
When applying \Cref{prop:tmax_decomp_prop} and \Cref{prop:tmin_decomp_prop}, it is possible to obtain matched edges that do not belong to any fundamental height region.  See \Cref{fig:fundheightbug} as an example.
\end{remark}

We conclude this section with the main result that $\Tmax$ and $\Tmin$ correspond to extremal matchings.

\begin{theorem}
\label{thm:tmax_tmin_are_extremal}
The height functions $\Tmax$ and $\Tmin$ correspond to extremal matchings, namely, $\Tmax$ corresponds to $M_-$ and $\Tmin$ corresponds to $M_+$. 
\end{theorem}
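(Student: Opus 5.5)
By \Cref{def:min_match}, $\Tmax$ corresponds to $M_-$ exactly when its matching admits no downswivel. Since $\Tmax$ is tight by \Cref{prop:tmax_decomp_prop}, I will use the swivel characterization of \Cref{prop:swivel_action_2} and its downward analogue. A downswivel at a face $f$ is available precisely when $f$ can be lowered by $n$ and the result is still a tight height function. Concretely, this means $\Tmax(f) > \Tmax(g)$ for every neighbor $g$ of $f$, with each difference taking the ``matched'' value from \Cref{def:tight_2}. So the plan is to assume such an $f$ exists and derive a contradiction, running the argument in the spirit of \Cref{prop:fund_is_tight_unique}. The $\Tmin$ statement then follows by the symmetric argument, using \Cref{prop:tmin_decomp_prop} and reversing all inequalities; I would only sketch it.

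\textbf{Case 1: $f$ lies in a fundamental height region $F_{j_i}$.} Here $\Tmax = \varphi_{j_i} - (i-1)n$ on $f$. For any neighbor $g$ we have $\Tmax(g) \ge \varphi_{j_i}(g) - (i-1)n$, since $\Tmax$ is a maximum. Hence the inequalities $\Tmax(f) > \Tmax(g)$ also hold with $\Tmax$ replaced by $\varphi_{j_i}$. By tightness, the matched and unmatched edges of $f$ are then read off exactly as for $\varphi_{j_i}$ locally. This says the matching of $\varphi_{j_i}$ can be downswiveled at $f$, contradicting the uniqueness in \Cref{prop:fund_is_tight_unique}.

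\textbf{Case 2: $f$ is not in any $F_{j_i}$.} Then $f$ is a face where two or more translated fundamental heights tie, so it lies on some land bridge $\mathcal{L}(\varphi_{j_\ell},\varphi_{j_m})$ (this includes faces like the red edge's faces in \Cref{rem:decomp_internal_edges}). Take any $i$ attaining the maximum at $f$. The same comparison as in Case 1 again gives $\varphi_{j_i}(f) > \varphi_{j_i}(g)$ for every neighbor $g$. Since this holds for the single fundamental height $\varphi_{j_i}$ at every neighbor, it produces the same system of three-term cyclic inequalities on face-label entries around the white vertices of $f$, via \Cref{lem:common_face_indexing} in the $(j_i+1)$-ordering. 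That system was shown contradictory in the proof of \Cref{prop:fund_is_tight_unique}.

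The main obstacle is making this reduction precise. The contradiction argument of \Cref{prop:fund_is_tight_unique} uses only strict local maximality of $f$ for a single $\varphi_j$ among its neighbors. So I must check two things:
\begin{itemize}
\item strict local maximality of $\Tmax$ at $f$ transfers to strict local maximality of $\varphi_{j_i}$ at $f$, using $\Tmax(g)\ge\varphi_{j_i}(g)-(i-1)n$ with equality at $f$;
\item that argument indeed never uses the exact tight differences, so that this transfer suffices.
\end{itemize}
If exact differences turn out to be needed, I would fall back on the edge description in \Cref{prop:tmax_decomp_prop}. Orient the boundary of $f$ clockwise and split it into segments lying on paths $P_{j_i}$. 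Along each segment, the matched edges of $f$ are black-to-white for $F_{j_i}$, and I would check that these are incompatible with $f$ being downswivelable, i.e.\ with $M$ containing all black-to-white edges of $f$ clockwise about $f$.
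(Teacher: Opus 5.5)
Your proof is correct, and it takes a different route from the paper's. Your fallback, not your main argument, is essentially the paper's proof. The paper works with the geometry of \Cref{prop:tmax_decomp_prop}:
\begin{itemize}
\item it rules out swivels inside each $F_{j_i}$ via \Cref{prop:fund_is_tight_unique};
\item it asserts that faces touching no $F_{j_i}$ cannot be swiveled, because several fundamental heights tie there;
\item for a face $f$ with an edge on some $P_{j_i}$, it uses that the matching contains the black-to-white edges of $P_{j_i}$ (oriented clockwise about $F_{j_i}$), which are reversed relative to $f$, so any swivel at $f$ is an upswivel.
\end{itemize}

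Your main argument is a pure comparison. If the tight function $\Tmax$ had a strict local maximum at an internal face $f$, then any $\varphi_{j_i}$ attaining the maximum at $f$ would also have one there, since $\varphi_{j_i}(f)-(i-1)n=\Tmax(f)>\Tmax(g)\ge\varphi_{j_i}(g)-(i-1)n$.

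Two parts of your plan are unnecessary. The split into Cases 1 and 2 adds nothing, and so does the worry about exact tight differences. For any height function, a strict local maximum at an internal face forces the unique ascent at each white vertex $w$ of $f$, hence the matched edge at $w$, to be an edge of $f$ in a fixed rotational position relative to $f$. This follows from almost monotonicity at $w$. So the matching contains an alternating half of the edges of $f$ and can be swiveled there. That already contradicts the statement of \Cref{prop:fund_is_tight_unique}, not just its proof. The $\Tmin$ case is symmetric with minima.

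What your route buys is uniformity and rigor. It never needs land bridges, the regions $F_{j_i}$, or the paths $P_{j_i}$. It handles faces outside every $F_{j_i}$, as in \Cref{rem:decomp_internal_edges}, with the same one-line comparison, whereas the paper only asserts the conclusion for such faces. It also shows more generally that any tight pointwise maximum (resp. minimum) of translated fundamental heights yields $M_-$ (resp. $M_+$). The paper's route instead records the edge structure along the $P_{j_i}$, which it then reuses in \Cref{thm:main} to read off boundary conditions.
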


\begin{proof}
Begin with $\Tmax$. From \Cref{prop:tmax_decomp_prop} and \Cref{prop:fund_is_tight_unique}, the matching corresponding to $\Tmax$ on each $F_{j_i}$ is unique and cannot be swiveled. Moreover, if any $g \in F(G)$ is such that no edges of $g$ belong to some $P_{j_i}$ and $g \not \in F_{j_i}$ for all $i$, that is, $g$ is not directly adjacent to or included in any $F_{j_i}$, then $\Tmax(g)$ is contributed to non-uniquely by several $\varphi_{j_i}$ and so the matching corresponding to $\Tmax$ may not be swiveled at $g$. 

So suppose the matching corresponding to $\Tmax$ can be swiveled at some $f \in F(G)$, that is, this matching contains all alternating edges of $f$ with respect to the clockwise orientation of edges about $f$. At least one edge, say $e$, of $f$ belongs to some $P_{j_i}$. From \Cref{prop:tmax_decomp_prop}, the matching of $\Tmax$ contains all black-to-white edges of $P_{j_i}$ when orienting $P_{j_i}$ clockwise about $F_{j_i}$. In orienting edges of $f$ clockwise about $f$, then the edges of $f$ which belong to $P_{j_i}$ are opposite the orientation of $P_{j_i}$. Now either $e$ is included or excluded in the matching corresponding to $\Tmax$. If $e$ is included in this matching, then $e$ is white-to-black with respect to the orientation about $f$ and the matching of $\Tmax$ contains all white-to-black edges of $f$. If $e$ is excluded from this matching, then $e$ is black-to-white with respect to the orientation about $f$ and the matching of $\Tmax$ again contains all white-to-black edges of $f$. In particular, independent of whether $e$ is an included or excluded edge of this matching, swiveling this matching at $f$ is an upswivel. Therefore, the matching corresponding to $\Tmax$ is the minimal matching, $M_-$. 

Now consider $\Tmin$. From \Cref{prop:fund_is_tight_unique} and \Cref{prop:tmin_decomp_prop}, the matching corresponding to $\Tmin$ cannot be swiveled at any face $f' \in F_{j_i}$ for all $i$ or at any face $g \in F(G)$ such that $g$ is not directly adjacent to any $F_{j_i}$ for all $i$. Suppose the matching corresponding to $\Tmin$ can be swiveled at some $f \in F(G)$, that is, this matching contains all alternating edges of $f$ with respect to the clockwise orientation of edges about $f$. Again, at least one edge, say $e$, of $f$ belongs to some $P_{j_i}$ and the clockwise orientation of edges of $P_{j_i}$ about $F_{j_i}$ is opposite the orientation of edges clockwise about $f$. Thus, independent of whether $e$ is included or excluded from the matching corresponding to $\Tmin$, this matching contains all black-to-white edges clockwise about $f$ and swiveling at $f$ is a downswivel. Therefore, the matching corresponding to $\Tmin$ is the maximal matching, $M_+$. 
\end{proof}

\begin{figure}[!h]
    \centering
    \includegraphics[width=0.45\linewidth]{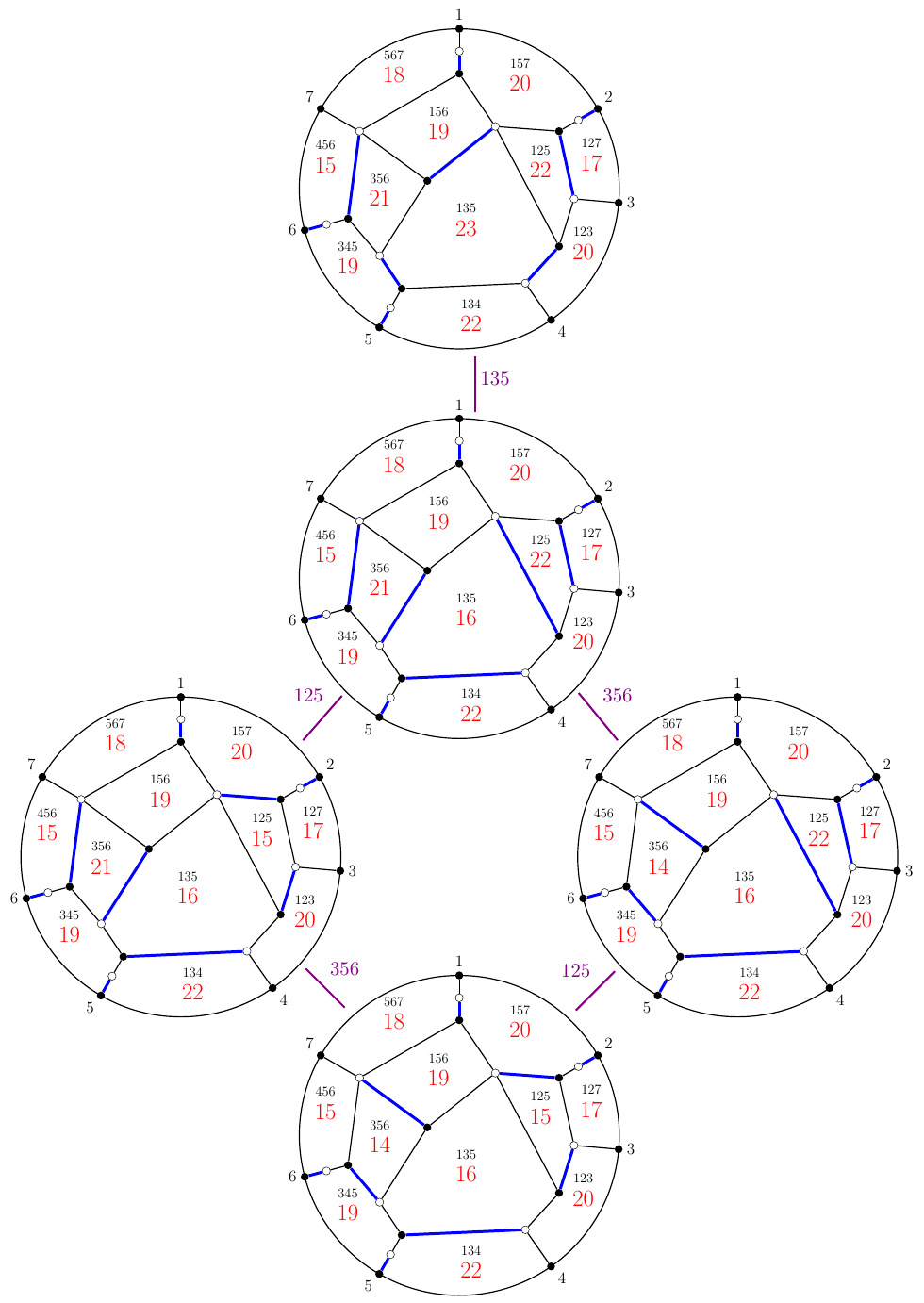} \hspace{.3em} \includegraphics[width=0.45\linewidth]{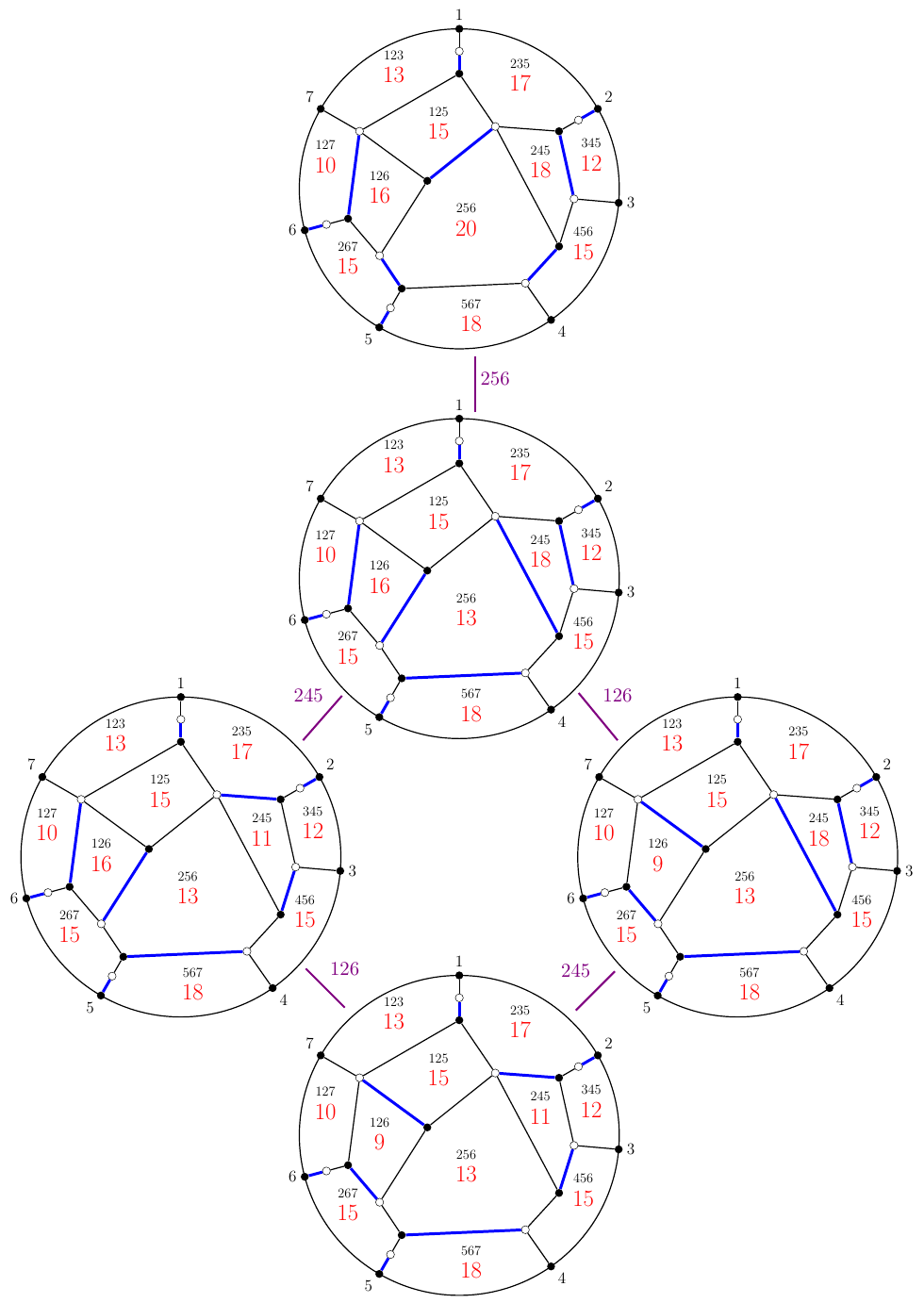}
    \caption{(Left): $M_-$ from $\TMAX{\varphi_1,\varphi_4(1)} = \tmax{\varphi_1,\varphi_4,\varphi_5}$ on source-labeled $G_B$ and the remaining poset; (Right):$M_+$ from $\TMIN{\varphi_2,\varphi_6(2)} = \tmin{\varphi_2,\varphi_5,\varphi_6}$ on target-labeled $G_B$ and the remaining poset. The labels of swiveled faces are included as labels on edges indicating cover relations.}
    \label{fig:256_poset_by_heights}
\end{figure}

\begin{example}
\label{ex:256_posets_by_heights}
We obtain the lattice $\mathcal{M}_{256}$ on $G_B$ of \Cref{fig:256_poset} by first constructing either $M_-$ as $\TMAX{\varphi_1,\varphi_4(1)} = \tmax{\varphi_1,\varphi_4,\varphi_5}$ in source-labeling or $M_+$ as $\TMIN{\varphi_2,\varphi_6(2)} = \tmin{\varphi_2,\varphi_5,\varphi_6}$ in target-labeling, and then applying swivels as in \Cref{prop:swivel_action_2} to obtain the remaining height functions and matchings of this lattice. These identical lattices are displayed in \Cref{fig:256_poset_by_heights}
\end{example}

\subsection{Boundary conditions for $\TTMAX$ and $\TTMIN$}
\label{ssec:extreme_match}

In this section, we let $G$ parameterize a positroid cell $\Pi_{\mathcal{P}}$ equipped with Grassmann necklace $\gneck$, reverse Grassmann necklace $\rgneck$, and decorated (trip) permutation $\tilde{\pi}$. We require a choice of face-labeling convention in order to obtain results regarding boundary conditions for matchings corresponding to $\TTMAX$ and $\TTMIN$. In particular, we associate source-labeling with $\TTMAX$ and target-labeling with $\TTMIN$. Our main results, \Cref{thm:main} and \Cref{thm:main_restate}, yield that $\TTMAX$ gives the minimal matching on $G$ with boundary condition read from $\gneck$ and that $\TTMIN$ gives the maximal matching on $G$ with boundary condition read from $\rgneck$. We conclude this section by showing that both pairings of $\TTMAX$ with source-labeling and $\TTMIN$ with target-labeling give all possible boundary conditions of extremal matchings on $G$, i.e. all $I \in \mathcal{P}$ arise as some $\TTMAX$ with respect to source-labeling and as some $\TTMIN$ with respect to target-labeling.

\begin{proposition}
\label{prop:source_fund_bdy}
In source-labeling, $\varphi_j$ has boundary condition $\gneckset{j+1}$. 
\end{proposition}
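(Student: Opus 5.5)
The plan is to read off which boundary edges are matched by using the local criterion relating $\varphi_j$ to its matching. Recall that a boundary vertex $i$ lies in $\partial M$ exactly when the edge $e_i$ joining $i$ to its internal (white, by \Cref{lem:lollis} and the no-lollipop reduction) neighbour $w_i$ is matched. Since $\varphi_j$ is tight (\Cref{prop:fund_is_tight_unique}), membership $e_i\in M$ is determined by the difference of $\varphi_j$ on the two faces separated by $e_i$. These are the boundary faces on either side of boundary vertex $i$. Equivalently, at $w_i$ the edge $e_i$ is matched iff it is the unique place, going counterclockwise around $w_i$, where the almost-monotone sequence of \Cref{lem:common_face_indexing} (in the $(j+1)$-ordering) jumps up.

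First I identify the two boundary faces and the strands on $e_i$. In source-labeling, by \Cref{prop:gneck_rgneck_as_faces}, the faces flanking $i$ are $\rgneckset{i-1}$ and $\rgneckset{i}$. The edge $e_i$ is traversed by exactly two strands: $\cev{i}$, leaving boundary vertex $i$, and the strand entering $i$, which is $\cev{\tau}_{p}$ with $p=\tilde{\pi}^{-1}(i)$. Hence these two faces have the form $S\cup\{i\}$ and $S\cup\{p\}$, and only the face to the left of $\cev{i}$ contains $i$. This is consistent with the recurrence of $\rgneck$ and with $\tilde{\pi}[\rgneck]=\tilde{\pi}^{-1}$.

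Next I compute $\varphi_j$ on these two faces. The difference comes only from the two distinguished indices, so
\[
\varphi_j(S\cup\{i\})-\varphi_j(S\cup\{p\})=i'-p',
\]
where $x'=x$ or $x+n$ according to whether $x>j$ or $x\le j$. The sign of this difference is positive iff $i>_{j+1}p$. I then apply the tightness criterion (\Cref{def:tight_2}), using that the face containing $i$ is directly upstream or downstream of $e_i$ according to the orientation conventions. Alternatively, I use the counterclockwise jump criterion at $w_i$. Either way, the aim is to show that $e_i\in M$ iff $i<_{j+1}\tilde{\pi}^{-1}(i)$. By the explicit anti-exceedance description of the necklace given before \Cref{lem:necklace_dec_perm_bijection}, this is exactly the condition $i\in\gneckset{j+1}$, and ranging over all $i$ gives $\partial M=\gneckset{j+1}$.

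The main obstacle is the orientation bookkeeping in this last step. I must confirm on which side of $e_i$ the $i$-containing face lies relative to the counterclockwise order at $w_i$, so that the inequality comes out as $i<_{j+1}\tilde{\pi}^{-1}(i)$ and not its reverse. The analogous issue arises at $j$ itself, where $i=j+1$ is $(j+1)$-minimal. I would fix the conventions on one boundary vertex, check the outcome against $G_B$ and \Cref{fig:fund_heights_source} (for instance, $\varphi_0$ should give $\gneckset{1}=123$), and then invoke \Cref{lem:passing_lemma}. That lemma says passing from $\varphi_{j-1}$ to $\varphi_j$ flips only edges on the $j$-strand, and this matches the necklace recurrence replacing $j$ by $\tilde{\pi}(j)$. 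Consequently, after a base case the statement for all $j$ follows by induction.
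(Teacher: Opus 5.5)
Your proposal is correct and follows essentially the paper's route. Both arguments compare $\varphi_j$ on the adjacent boundary faces $\rgneckset{i-1}$ and $\rgneckset{i}$, which differ only by $\tilde{\pi}^{-1}(i)$ versus $i$, and conclude that $e_i$ is matched iff $i<_{j+1}\tilde{\pi}^{-1}(i)$, i.e.\ iff $i\in\gneckset{j+1}$. The orientation check you flag is a single local fact, independent of $i$ and $j$: going counterclockwise around $w_i$ one crosses $e_i$ from $\rgneckset{i}$ into $\rgneckset{i-1}$, so $e_i$ is matched iff $\varphi_j(\rgneckset{i-1})>\varphi_j(\rgneckset{i})$, which is exactly the criterion the paper asserts; this settles every $j$ at once, so the induction via \Cref{lem:passing_lemma} is not needed.
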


\begin{proof}
For all $i \in [n]$, let $e_i$ denote the unique edge incident to the boundary vertex $i$. Note for any white lollipop at $\ell$, then $e_\ell$ is included in any matching on $G$ and for any black lollipop at $\ell$, then $e_\ell$ is excluded from any matchings on $G$. Let $L = \{\ell \, | \, \ell \text{ is a lollipop of $G$}\}$ be the collection of all lollipops. Ignoring lollipops, we assume without loss of generality that $\tilde{\pi}^{-1}(i) \ne i$ for all $i$. We note that $e_i$ is included in the matching corresponding to $\varphi_j$ if and only if $\varphi_j(\rgneckset{i-1}) > \varphi_j(\rgneckset{i})$, that is, if and only if $\varphi_j$ is decreasing as we move clockwise about adjacent boundary faces of $G$. Note that $\varphi_j$ induces the $(j+1)$-ordering on $[n]$. In particular, $\rgneckset{j}$ is $(j+1)$-maximal and a boundary face label of source-labeled $G$, hence $\varphi_j(\rgneckset{j}) > \varphi_j(\rgneckset{j+1})$ and $e_{j+1}$ is included in our matching. With respect to $\tilde{\pi}^{-1}$, we observe $j+1 <_{j+1} \tilde{\pi}^{-1}(j+1)$ and so $j+1$ is an anti-exceedance of $\tilde{\pi}$, that is, $j+1 \in \gneckset{j+1}$. Traversing boundary faces of $G$ clockwise from $\rgneckset{j}$ corresponds to increasing in index $j$. From definition, we have that $\rgneckset{j+i}$ is $(j+i+1)$-maximal for all $i$, so in particular, $\rgneckset{j+i} >_{j+i+1} \rgneckset{j+i+1}$ for all $i$. Now taking $1 \le i \le n-2$, we claim that $\rgneckset{j+i} >_{j+i+1} \rgneckset{j+i+1}$ implies $\rgneckset{j+i}>_{j+1} \rgneckset{j+i+1}$ if and only if $j+i+1 <_{j+1} \tilde{\pi}^{-1}(j+i+1)$. In the forward direction, note that this also implies we observe $\rgneckset{j+i} >_{j+i+1 - r} \rgneckset{j+i+1}$ for all $0 \le r \le i$. One may check that as $r$ increases, then the $(j+i+1 -r)$-smallest elements of $\gneckset{j+i}$ must also be included in $\rgneckset{j+i+1}$. Rephrasing this in terms of our trip permutation, this means $j+i+1 <_{j+i+1-r} \tilde{\pi}^{-1}(j+i+1)$ for each $r \le i$. In particular, we have $j+i+1 <_{j+1} \tilde{\pi}^{-1}(j+i+1)$ as claimed. In the reverse direction, the face labels $\rgneckset{j+i}$ and $\rgneckset{j+i+1}$ share $k-1$ entries in common except $\tilde{\pi}^{-1}(j + i+1) \in \rgneckset{j+i}$ and $j+i+1 \in \rgneckset{j+i+1}$. Then from definition we have $\rgneckset{j+i}>_{j+i+1} \rgneckset{j+i+1}$ and given the $k-1$ common face label entries as well as $j+i+1 <_{j+1} \tilde{\pi}^{-1}(j+i+1)$, then also $\rgneckset{j+i} >_{j+1} \rgneckset{j+i+1}$ as desired. In context of our proposition, the left side of this equivalence yields that $e_{j+i+1}$ is included in the matching corresponding to $\varphi_j$ and the right side yields that $j+i+1$ is an anti-exceedance of $\tilde{\pi}$. Therefore, the boundary condition of the matching of $\varphi_j$ in source-labeling is $\gneckset{j+1}$
\end{proof}

By the analogous argument and applying the construction of $\rgneckset{j}$ as the exceedance set of $\tilde{\pi}$ under $(j+1)$-ordering, we also obtain the analogous boundary condition of matchings corresponding to $\varphi_j$ on target-labeled $G$. 

\begin{proposition}
\label{prop:target_fund_bdy}
In target-labeling, $\varphi_j$ has boundary condition $\rgneckset{j}$. 
\end{proposition}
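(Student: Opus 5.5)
I will mirror the proof of \Cref{prop:source_fund_bdy}, swapping the roles of the two necklaces. In target-labeling, \Cref{prop:gneck_rgneck_as_faces} places $\gneckset{i+1}$ on the boundary face between boundary vertices $i$ and $i+1$. So the boundary face lying between $i-1$ and $i$ is $\gneckset{i}$, and the one between $i$ and $i+1$ is $\gneckset{i+1}$. By \Cref{lem:lollis}, and because lollipops contribute to boundary conditions in the same way for every matching, I may assume $G$ has no lollipops.

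\textbf{Step 1.} I read off the boundary edges. Let $e_i$ be the edge at boundary vertex $i$. Exactly as in the proof of \Cref{prop:source_fund_bdy}, I will argue that $e_i$ lies in the matching of $\varphi_j$ if and only if $\varphi_j$ strictly decreases when passing clockwise across $e_i$, that is, if and only if $\varphi_j(\gneckset{i}) > \varphi_j(\gneckset{i+1})$. This uses the almost monotonicity at the white vertex adjacent to $i$, with the counterclockwise convention. Since $\varphi_j$ sums labels in the $(j+1)$-ordering, the condition compares $\gneckset{i}$ and $\gneckset{i+1}$ in that ordering.

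\textbf{Step 2.} I use the necklace recurrence. Either $\gneckset{i+1} = \gneckset{i}$, or $\gneckset{i+1} = (\gneckset{i}\setminus\{i\})\cup\{\tilde{\pi}(i)\}$. In the first case the two heights agree, so $e_i$ is unmatched. This is consistent with the claim, since without lollipops $i\notin\gneckset{i}$ forces $i$ not to be an exceedance. In the second case, the sum $\varphi_j$ drops exactly when $i$ is larger than $\tilde{\pi}(i)$ in the $(j+1)$-ordering, i.e.\ when $i >_{j+1} \tilde{\pi}(i)$.

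\textbf{Step 3.} I compare with the explicit description $\rgneckset{j} = \{ i : i >_{j+1} \tilde{\pi}(i)\}$, which is the set of exceedances of $\tilde{\pi}$ in the $(j+1)$-ordering. For $i$ with $i \in \gneckset{i}$, this is literally the drop criterion from Step 2. So the matched boundary vertices are exactly $\rgneckset{j}$.

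\textbf{Main obstacle.} The delicate point is Step 1. I must fix the sign and orientation conventions so that the matched boundary edge corresponds to a drop of $\varphi_j$, rather than a rise, in the target-labeled setting. I also need the comparison of single swapped entries in Step 2 to be clean. The source proof instead used a chain of orderings $>_{j+i+1-r}$ to pass to $>_{j+1}$. If the orientation turns out reversed, I would check it on $G_B$ against \Cref{fig:fund_heights_targ} and flip accordingly. Finally, a count gives $|\rgneckset{j}| = k$, which confirms the answer.
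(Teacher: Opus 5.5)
Your argument is correct and is essentially the paper's own: the paper proves this proposition by mirroring \Cref{prop:source_fund_bdy} and using the exceedance description $\rgneckset{j}=\{i : i>_{j+1}\tilde{\pi}(i)\}$ (with lollipops removed), exactly as you do. The one difference is in your Step 2, where you compare directly the swapped entries $i$ and $\tilde{\pi}(i)$ of the adjacent boundary labels $\gneckset{i}$ and $\gneckset{i+1}$; this is cleaner than the chain of orderings $>_{j+i+1-r}$ used in the source case. Your Step 1 orientation is right: going counterclockwise about the white neighbor of $i$ one crosses $e_i$ from $\gneckset{i+1}$ into $\gneckset{i}$, and the matched edge is where that sequence jumps up, consistent with \Cref{lem:passing_lemma}; the first case of Step 2 never occurs once lollipops are removed, since then $i\in\gneckset{i}$ for every $i$.
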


In order to apply the boundary data of matchings we can obtain from fundamental height regions we develop the following notation.

\begin{definition}
\label{def:source_ess}
Fix a minimal matching $M_-$ which results from a $\TTMAX$ on source-labeled $G$. An ordered collection $S = \{\varphi_{j_1}, \ldots, \varphi_{j_p}\}$ of fundamental height functions and consecutive relative translations of type $\vec{a}$ is \textbf{source-essential} if $p$ is the least integer for which $\TMAX{\varphi_{j_1},\varphi_{j_2}(a_1),\ldots, \varphi_{j_p}(a_{p-1})}$ corresponds to $M_-$.
\end{definition}

\begin{definition}
\label{def:target_ess}
Fix a maximal matching $M_+$ which results from a $\TTMIN$ on target-labeled $G$. An ordered collection $T = \{\varphi_{j_1}, \ldots, \varphi_{j_p}\}$ of fundamental height functions and consecutive relative translations of type $\vec{a}$ is \textbf{target-essential} if $p$ is the least integer for which $\TMIN{\varphi_{j_1},\varphi_{j_2}(a_1),\ldots, \varphi_{j_p}(a_{p-1})}$ corresponds to $M_+$.
\end{definition}

From \Cref{def:tmax_tmin} and inverting the procedure given in the proof of \Cref{lem:tmax_k_funds}, any $\Tmax$ can be expressed as a compressed translated maximum over a source-essential set of fundamental height functions. By a slight abuse of notation, for any source-essential set of fundamental heights with consecutive relative translations of type $\vec{a}$, we have

\[\TMAX{\varphi_{j_1}, \varphi_{j_2}(a_1),\ldots, \varphi_{j_p}(a_{p-1})} = \tmax{\underbrace{\varphi_{j_1}, \ldots, \varphi_{j_1}}_{\text{$a_1$ times}}, \underbrace{\varphi_{j_2}, \ldots, \varphi_{j_2}}_{\text{$a_2$ times}}, \ldots, \underbrace{\varphi_{j_p}, \ldots,\varphi_{j_p}}_{\text{$a_p$ times}}}\] where the $\Tmax$ on the right has consecutive relative translations $a_i' = 1$ for all $i$. Similarly, inverting the analogous procedure outlined in the proof of \Cref{lem:tmin_k_funds}, any $\Tmin$ can be expressed as a compressed translated minimum over a target-essential set of fundamental height functions. Then also by a slight abuse of notation, for any target-essential set of fundamental heights with consecutive relative translations of type $\vec{a}$, we have
\[\TMIN{\varphi_{j_1}, \varphi_{j_2}(a_1),\ldots, \varphi_{j_p}(a_{p-1})} = \tmin{\underbrace{\varphi_{j_1}, \ldots, \varphi_{j_1}}_{\text{$a_p$ times}}, \underbrace{\varphi_{j_2}, \ldots, \varphi_{j_2}}_{\text{$a_1$ times}}, \ldots, \underbrace{\varphi_{j_p}, \ldots,\varphi_{j_p}}_{\text{$a_{p-1}$ times}}}\] where the $\Tmin$ on the right has consecutive relative translations $a_i' = 1$ for all $i$. These source- and target-essential collections for expressing $\Tmax$ as some $\TTMAX$ and $\Tmin$ as some $\TTMIN$ give boundary conditions for their corresponding extremal matchings as demonstrated in \Cref{thm:main} and \ref{thm:main_restate}. 

We recall that each term $\gneckset{i}$, resp. $\rgneckset{i-1}$, in the Grassmann necklace $\gneck$, resp. reverse Grassmann necklace $\rgneck$, comes with the $i$-ordering on $[n]$. Following the convention mentioned in \Cref{rem:GNwrittenorder}, for $\gneckset{i} =  \{i_1, i_2, \ldots, i_k\}$ with $i_1 <_i i_2 <_i \cdots <_i i_k$, we notate the truncated Grassmann necklace term as \[\gneckset{i}^\ell :=\{i_1, i_2, \ldots, i_\ell\}, \] i.e. the $\ell$ smallest elements, where $\ell \le k$. Similarly, for $\rgneckset{i} = \{i_1, i_2, \ldots, i_k\}$ with $i_1 >_{i+1} i_2 >_{i+1} \cdots >_{i+1} i_k$, we notate the truncated reverse Grassmann necklace term as \[\rgneckset{i}^\ell: \{i_1, i_2,\ldots, i_\ell\},\] i.e. the $\ell$ largest elements, where $\ell \le k$. We now present our main theorems. 

\begin{theorem}
\label{thm:main}
For source-labeled $G$ and the source-essential set $S = \{\varphi_{j_1},\varphi_{j_2},\ldots, \varphi_{j_p}\}$ with consecutive relative translations of type $\vec{a}$, the matching corresponding to $\TMAX{\varphi_{j_1},\varphi_{j_2}(a_1), \ldots, \varphi_{j_p}(a_{p-1})}$ is $M_-$ and has the boundary condition $\gneckset{j_1 + 1}^{a_1} \cup \gneckset{j_2 + 1}^{a_2} \cup \cdots \cup \gneckset{j_p+1}^{a_p}$. Here we recall the value of $a_p$ from \Cref{def:rel_trans}.  
\end{theorem}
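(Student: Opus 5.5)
The first assertion, that the matching is $M_-$, follows directly from earlier results. By the expansion displayed after \Cref{def:target_ess} (the inverse of the procedure in the proof of \Cref{lem:tmax_k_funds}), $\TMAX{\varphi_{j_1},\varphi_{j_2}(a_1),\ldots,\varphi_{j_p}(a_{p-1})}$ equals a $\Tmax$ over exactly $k$ fundamental heights with unit relative translations. By \Cref{thm:tmax_tmin_are_extremal}, its matching is $M_-$. The real content is therefore the boundary condition.

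For the boundary condition, I would localize to the boundary edges $e_i$ incident to boundary vertex $i$, as in the proof of \Cref{prop:source_fund_bdy}. By \Cref{lem:lollis} we may discard lollipops. The edge $e_i$ lies between the boundary faces $\rgneckset{i-1}$ and $\rgneckset{i}$ (\Cref{prop:gneck_rgneck_as_faces}), and $e_i\in M_-$ exactly when $\TTMAX(\rgneckset{i-1}) > \TTMAX(\rgneckset{i})$. Since $\TTMAX$ is tight (\Cref{prop:tmax_decomp_prop}), this is decided by whichever translated fundamental height attains the maximum on those two faces. So the plan has two parts:
\begin{enumerate}
\item identify which boundary arcs lie in which fundamental height region $F_{j_s}$;
\item on the arc belonging to $F_{j_s}$, read off the boundary matching from \Cref{prop:source_fund_bdy}.
\end{enumerate}

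For the first part, I would use the land-bridge description. On the boundary face $\rgneckset{i}$, the comparison between $\varphi_{j_\ell}-t_{\ell-1}n$ and $\varphi_{j_m}-t_{m-1}n$ is governed by $|\rgneckset{i}\cap H|$ versus $r_{\ell,m}$, where $H=[j_m]\setminus[j_\ell]$. Since boundary faces change by one element at a time and each $\rgneckset{i}$ is $(i+1)$-maximal, the count $|\rgneckset{i}\cap(j_s,j_{s+1}]|$ should behave monotonically as $i$ travels around the boundary. I expect this to show that the boundary vertices in the cyclic interval $(j_s, j_{s+1}]$ are governed by $\varphi_{j_s}$, apart from the transition points where land bridges meet the boundary.

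By \Cref{prop:source_fund_bdy}, $\varphi_{j_s}$ alone matches exactly the vertices of $\gneckset{j_s+1}$. Restricted to the window where $\varphi_{j_s}$ wins, it should match exactly the $(j_s+1)$-smallest $a_s$ elements of $\gneckset{j_s+1}$, namely $\gneckset{j_s+1}^{a_s}$.

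The \emph{main obstacle} is showing that the window controlled by $\varphi_{j_s}$ meets $\gneckset{j_s+1}$ in exactly $a_s$ elements, and that these are the smallest ones. This is where source-essentiality must enter. My approach would be a counting argument.
\begin{itemize}
\item By \Cref{prop:plabic_type_kn}, the final boundary condition has exactly $k=\sum_s a_s$ elements.
\item Minimality of $p$ forces each $\varphi_{j_s}$ to contribute uniquely somewhere on the boundary. Otherwise it could be deleted, with the neighbouring translations merged, without changing the matching.
\item The land bridge $\mathcal{L}(\varphi_{j_s},\varphi_{j_{s+1}})$ meets the boundary at the point where $\rgneckset{i}$ first contains $a_s$ elements of $(j_s,j_{s+1}]$. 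Via the exceedance description of $\rgneckset{i}$, this should correspond to the $a_s$-th element of $\gneckset{j_s+1}$ in $(j_s+1)$-order.
\end{itemize}
Taking the union over $s$, and checking that the pieces are disjoint, gives $\gneckset{j_1+1}^{a_1}\cup\cdots\cup\gneckset{j_p+1}^{a_p}$.

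If the direct window argument proves messy, I would instead induct on $p$. Adding $\varphi_{j_{p}}$ with translation $t_{p-1}n$ should replace the tail of the boundary condition of the $(p-1)$-term $\TTMAX$ by $\gneckset{j_p+1}^{a_p}$. This can be verified edge by edge along the land-bridge borders, using \Cref{lem:passing_lemma}.
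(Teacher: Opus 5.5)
Your plan has the same architecture as the paper's proof. You get $M_-$ from \Cref{thm:tmax_tmin_are_extremal}, cut the boundary into arcs at the boundary faces of the land bridges $\mathcal{L}(\varphi_{j_s},\varphi_{j_{s+1}})$, read each arc from $\varphi_{j_s}$ via \Cref{prop:source_fund_bdy}, and count using $|I\cap(j_s,j_{s+1}]|=a_s$.

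The genuine gap is in your first step. The vertices of the cyclic interval $(j_s,j_{s+1}]$ are \emph{not} governed by $\varphi_{j_s}$, even away from transition points. Consequently your ``main obstacle'' (that $(j_s,j_{s+1}]$ meets $\gneckset{j_s+1}$ in exactly $a_s$ elements) is false.

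Take the paper's example $\tmax{\varphi_0,\varphi_3,\varphi_7}$ on the top cell of $\mathrm{Gr}(3,9)$ (\Cref{fig:fundheightbug}), where $\rgneckset{i}=\{i-2,i-1,i\}$. The three terms $\varphi_0,\ \varphi_3-9,\ \varphi_7-18$ take the values $12,12,21$ on $\rgneckset{5}$ and $15,6,24$ on $\rgneckset{6}$. So both faces adjacent to $e_6$ lie in $F_7$, and vertex $6\in(3,7]$ is decided by $\varphi_7$ and is unmatched. By contrast, $\varphi_3$ would match it, since $6\in\gneckset{4}=\{4,5,6\}$. The actual arcs are $\{9,1\},\{2,3,4\},\{5,6,7,8\}$, with boundary condition $\{1,4,8\}$, not $(0,3],(3,7],(7,9]$.

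Relatedly, $c_s(i):=|\rgneckset{i}\cap(j_s,j_{s+1}]|$ cannot be monotone around the circle. Since $\rgneckset{i}=(\rgneckset{i-1}\setminus\{\tilde\pi^{-1}(i)\})\cup\{i\}$ (no lollipops), it is non-decreasing while $i$ runs through $(j_s,j_{s+1}]$ and non-increasing on the complement.

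The correct picture comes from Gale-extremality. Over $\mathcal{P}$, $\rgneckset{j_s}$ minimizes and $\rgneckset{j_{s+1}}$ maximizes $|I\cap(j_s,j_{s+1}]|$. The $\varphi_{j_{s+1}}$-term minus the $\varphi_{j_s}$-term equals $n(|I\cap(j_s,j_{s+1}]|-a_s)$. So minimality of $p$ gives $c_s(j_s)<a_s\le c_s(j_{s+1})$; otherwise one term is dominated everywhere and can be dropped without changing $\TTMAX$. This is how essentiality should enter. Your deletion argument does not work: dropping a term that wins uniquely only on interior faces does change the matching.

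It follows that, going clockwise from $\rgneckset{j_s}$, the $\varphi_{j_s}$-term stops beating the $\varphi_{j_{s+1}}$-term at the first face with $c_s=a_s$, which lies inside $(j_s,j_{s+1}]$. Symmetrically, its arc begins inside $(j_{s-1},j_s]$, so the arc straddles $j_s$ rather than starting there. You must then show that the matched vertices on this arc are exactly the $a_s$ smallest elements of $\gneckset{j_s+1}$ in $(j_s+1)$-order. In particular, the part of the arc before $j_s+1$ must contribute nothing. This is the step in the paper's proof that counts decreases of $\varphi_{j_i}$ along $F_{j_i}$ up to the land-bridge face.

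Your third bullet is not quite this. A vertex $x\in(j_s,j_{s+1}]$ with $\tilde\pi^{-1}(x)\in(x,j_{s+1}]$ is matched under $\varphi_{j_s}$ without raising $c_s$, and a land bridge can meet the boundary in several consecutive faces. For example, take $\TMAX{\varphi_0,\varphi_5(2)}$ on source-labeled $G_B$ (boundary $126$ in \Cref{tab:gb_missing_bdys}). There $c_1$ first reaches $a_1=2$ at $\rgneckset{1}=157$, when only vertex $1$ has been matched. The second element $2$ of $\gneckset{1}=123$ (with $\tilde\pi^{-1}(2)=5$) lies between the two land-bridge faces $\rgneckset{1}$ and $\rgneckset{2}=127$. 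So the count has to run through the whole run of land-bridge faces, not stop where $c_s$ first hits $a_s$.
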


\begin{proof}
Let $\TTMAX = \TMAX{\varphi_{j_1},\varphi_{j_2}(a_1), \ldots, \varphi_{j_p}(a_{p-1})}$ and let $M_-$ denote the corresponding matching. By \Cref{prop:tmax_decomp_prop}, we first consider the boundary conditions contributed on each fundamental height region $F_{j_i}$. In orienting $P_{j_i}$ clockwise about $F_{j_i}$, $M_-$ contains all black-to-white edges of $P_{j_i}$. In particular, the boundary vertex where $P_{j_i}$ originates contributes a boundary condition for $M_-$ on $F_{j_i}$ and we claim that this boundary condition is the $a_i$th term of $\gneckset{j_i +1}$. Following \Cref{prop:tmax_decomp_prop}, we have $F_{j_{i+1}}$ lies clockwise from $F_{j_i}$ and is separated from $F_{j_i}$ by $\mathcal{L}(\varphi_{j_i},\varphi_{j_{i+1}})$ for all $i \mod p$. With respect to the consecutive relative translation $a_i$ of $\varphi_{j_i}$ and $\varphi_{j_{i+1}}$ in $\Tmax$ for $i \ne p$ (or $\SumBlank{1 \le i < p}a_i$ if $i = p$), then any boundary face $g \in \mathcal{L}(\varphi_{j_i},\varphi_{j_{i+1}})$ satisfies that the face label of $g$ contains exactly $a_i$ elements from $[j_{i+1}]\setminus[j_i]$ for $i \ne p$ (or exactly $\SumBlank{1 \le i < p}a_i$ elements from $[j_p]\setminus[j_1]$ if $i = p$). Moreover, any such boundary face $g \in  \mathcal{L}(\varphi_{j_i},\varphi_{j_{i+1}})$ is not in any other $\mathcal{L}(\varphi_{j_i},\varphi_{j_m})$ for $m > i+1$ since the face label of $g$ contains exactly $a_i$ elements from $[j_{i+1}]\setminus[j_i]$. In particular, if $g \in \mathcal{L}(\varphi_{j_i},\varphi_{j_m})$ for $m > i+1$, then the face label of $g$ must contain exactly $\SumBlank{i+1 \le \ell < m} a_\ell$ entries from $[j_m]\setminus[j_{i+1}]$, so $g \in \mathcal{L}(\varphi_{j_{i+1}},\varphi_{j_m})$, too, which contradicts the source-essentiality of $S$. Applying a similar analysis yields that any lollipop of $G$ occurs strictly within some $F_{j_\ell}$, and so any boundary face $g \in  \mathcal{L}(\varphi_{j_i},\varphi_{j_{i+1}})$ is the unique boundary face separating $F_{j_i}$ and $F_{j_{i+1}}$ for all $i \mod p$. 

Now suppose $i \ne p$. From \Cref{prop:source_fund_bdy} and its proof, we have that $F_{j_i}$ contributes a boundary condition to $M_-$ whenever traversing the boundary faces of $G$ in $F_{j_i}$ clockwise about $G$, we have $\varphi_{j_i}(f) > \varphi_{j_i}(f')$, i.e. $f >_{j_i+1} f'$, where $f'$ is clockwise from $f$. Now recall that the boundary face $g \in  \mathcal{L}(\varphi_{j_i},\varphi_{j_{i+1}})$ has a face label which contains $a_i$ entries from $[j_{i+1}]\setminus[j_i]$ and each entry of $[j_{i+1}]\setminus[j_i]$ are at least $j_i+1$ in the $(j_i+1)$-ordering. Since $\varphi_{j_i}$ induces this $(j_i+1)$-ordering, this means within $F_{j_i}$ to $g$, there have been exactly $a_i-1$ instances for which $f >{j_i +1} f'$ with $f'$ clockwise from $f$ and for $f'' \in F_{j_i}$ separated from $g$ by $P_{j_i}$ is the $a_i$th instance for which $f'' >_{j_i+1} g$. Each such instance contributes a boundary condition, and since $\varphi_{j_i}$ on source-labeled $G$ has the boundary condition $\gneckset{j_i+1}$, then $F_{j_i}$ contributes the boundary condition $\gneckset{j_i+1}^{a_i}$ for each $i \ne p$. Now if $i = p$, by a similar argument, albeit using the complement of $[j_p]\setminus[j_1]$ in $[n]$, we observe that the remaining $a_p$ boundary conditions of $M_-$ which occur on $F_{j_p}$ are exactly given by $\gneckset{j_p+1}^{a_p}$ and we have reached our desired boundary condition of $M_-$.
\end{proof}

\begin{theorem}
\label{thm:main_restate}
For target-labeled $G$ and the target-essential set $T = \{\varphi_{j_1},\varphi_{j_2},\ldots, \varphi_{j_p}\}$ with consecutive relative translations of type $\vec{a}$, the matching corresponding to $\TMIN{\varphi_{j_1},\varphi_{j_2}(a_1), \ldots, \varphi_{j_p}(a_{p-1})}$ is $M_+$ and has the boundary condition $\rgneckset{j_1}^{a_p} \cup \rgneckset{j_2}^{a_1} \cup \cdots \cup \rgneckset{j_p}^{a_{p-1}}$. Here we recall the value of $a_p$ from \Cref{def:rel_trans}. 
\end{theorem}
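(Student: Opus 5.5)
The plan is to mirror the proof of \Cref{thm:main}, replacing \Cref{prop:tmax_decomp_prop} with \Cref{prop:tmin_decomp_prop} and \Cref{prop:source_fund_bdy} with \Cref{prop:target_fund_bdy}.

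First, the identification of the matching as $M_+$ is immediate. By the expansion of $\TTMIN$ over a target-essential set into an uncompressed $\Tmin$, \Cref{thm:tmax_tmin_are_extremal} applies. The real content is the boundary condition, which I will compute region by region.

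For this I use \Cref{prop:tmin_decomp_prop}. The fundamental height regions $F_{j_i}$ are cut out by land bridges, now with $F_{j_i}$ to the \emph{left} of $\mathcal{L}(\varphi_{j_i},\varphi_{j_m})$ for $m>i$. As in the proof of \Cref{thm:main}, target-essentiality of $T$ shows the following: the boundary faces in $\mathcal{L}(\varphi_{j_i},\varphi_{j_{i+1}})$ separate $F_{j_i}$ from $F_{j_{i+1}}$ and lie in no other land bridge, and lollipops lie strictly inside some region. Otherwise a fundamental height could be dropped. Consequently, the boundary faces of $G$ are partitioned cyclically into consecutive arcs, one arc per $F_{j_i}$, but now traversed \emph{counterclockwise}. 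On each arc, the matching agrees with that of $\varphi_{j_i}$ alone.

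Next I run the counting argument. By \Cref{prop:target_fund_bdy} and the dual of the proof of \Cref{prop:source_fund_bdy}, on target-labeled $G$ the boundary faces are $\gneckset{i+1}$. The edge $e_i$ is matched under $\varphi_j$ exactly when $\varphi_j$ drops across the boundary vertex $i$ in the appropriate cyclic direction. These drops occur at the exceedances of $\tilde\pi$ read in the $(j+1)$-ordering, and they accumulate into $\rgneckset{j}$, largest elements first.

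Walking from the land bridge at the start of the arc of $F_{j_i}$ to the land bridge at its end, I count drops. The bridge face $g\in\mathcal{L}(\varphi_{j_{i-1}},\varphi_{j_i})$ has exactly $r$ entries in $[j_i]\setminus[j_{i-1}]$, where $r$ is the relative translation. The ambient height functions force $\varphi_{j_i}$ to realize exactly that many drops on its arc, and these are the $(j_i+1)$-largest exceedances. So $F_{j_i}$ contributes $\rgneckset{j_i}^{a_{i-1}}$ for $i\ge 2$. For $i=1$ the wrap-around bridge $\mathcal{L}(\varphi_{j_1},\varphi_{j_p})$ is handled with the complement of $[j_p]\setminus[j_1]$, giving $\rgneckset{j_1}^{a_p}$. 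This index shift, $a_{i-1}$ attached to $j_i$ rather than $a_i$, comes from the reversed side-orientation in \Cref{prop:tmin_decomp_prop}. It is consistent with the repetition pattern in the expansion of $\TTMIN$ displayed before \Cref{thm:main}.

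The main obstacle is this counting step: verifying that the drops on each arc are exactly the top $a_{i-1}$ elements of $\rgneckset{j_i}$ in decreasing $(j_i+1)$-order, with the correct offset and wrap-around. I would first check it on the example $\TMIN{\varphi_2,\varphi_6(2)}$ on target-labeled $G_B$. There $\rgneckset{2}^{1}\cup\rgneckset{6}^{2}$ should equal $\{2\}\cup\{5,6\}=256$, which matches \Cref{ex:256_posets_by_heights}. Then I would argue in general by induction along the arc, using that consecutive boundary labels $\gneckset{i},\gneckset{i+1}$ differ by swapping $i$ for $\tilde\pi(i)$.
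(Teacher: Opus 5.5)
Your plan is essentially the paper's own route: the paper gives no separate proof of \Cref{thm:main_restate} and treats it as the dual of the proof of \Cref{thm:main}, replacing \Cref{prop:tmax_decomp_prop} and \Cref{prop:source_fund_bdy} by \Cref{prop:tmin_decomp_prop} and \Cref{prop:target_fund_bdy}, with the contribution of $F_{j_i}$ now truncated by $\mathcal{L}(\varphi_{j_{i-1}},\varphi_{j_i})$ rather than by $\mathcal{L}(\varphi_{j_i},\varphi_{j_{i+1}})$ (this is exactly the source of your shift to $a_{i-1}$, and to $a_p$ for $i=1$), and your counting step is at the same level of detail as the paper's corresponding step. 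One clarification of ``traversed counterclockwise'': the arcs of $F_{j_1},\ldots,F_{j_p}$ still occur in clockwise order around the disk, as in \Cref{thm:main} (you can check this on $\tmin{\varphi_1,\varphi_3,\varphi_5}$ on target-labeled $G_B$), and what reverses is only the counting, which now runs through the elements of $\rgneckset{j_i}$ counterclockwise from $j_i$ toward the bridge at the counterclockwise end of the arc.
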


We now show that any minimal matching on source-labeled $G$ is obtained by taking some $\Tmax$ of fundamental height functions and also that any maximal matching on target-labeled $G$ is obtained by taking some $\Tmin$ of fundamental height functions. In particular, \Cref{thm:boundary_meas_map} yields that $\mathcal{P}$ is realized as the collection of all $k$-subset boundary conditions of matchings on $G$. Further, \Cref{thm:gneck_positroid_biject} gives that $\mathcal{P}$ is uniquely determined by both $\gneck$ and $\rgneck$. From pairing these results, we give the following lemma. 

\begin{lemma}
    \label{lem:all_matchings}
The minimal matching with any boundary condition $J \in \mathcal{P}$ (i.e. for which $\Delta_J(A) > 0$ for all $A \in \Pi_\mathcal{P}$) on source-labeled $G$ is obtained as some $\TTMAX$ of fundamental height functions. Analogously, the maximal matching with boundary condition $J \in \mathcal{P}$ (i.e. for which $\Delta_J(A) > 0$ for all $A \in \Pi_\mathcal{P}$) on target-labeled $G$ is obtained as some $\TTMIN$ of fundamental height functions.
\end{lemma}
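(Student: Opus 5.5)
Given \Cref{thm:main} and \Cref{thm:tmax_tmin_are_extremal}, the plan is purely combinatorial. Every $\TTMAX$ on source-labeled $G$ is a height function whose matching is the minimal element of its lattice. Its boundary condition is $\gneckset{j_1+1}^{a_1}\cup\cdots\cup\gneckset{j_p+1}^{a_p}$. Since each $\mathcal{M}_J$ has a unique minimal element by \Cref{thm:matching_poset_bdy_cond}, it suffices to show one thing: every $J\in\mathcal{P}$ can be written as such a union for some choice of $0\le j_1<\cdots<j_p<n$ and positive integers $a_1,\dots,a_p$ summing to $k$. The $\TTMIN$ statement then follows symmetrically. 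There we use \Cref{thm:main_restate}, truncations $\rgneckset{j}^{\ell}$ of the reverse necklace, and the characterization of $\mathcal{P}$ by $J\le_{a+1}\rgneckset{a}$.

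\textbf{Construction for source labeling.} Fix $J\in\mathcal{P}$. By \Cref{thm:gneck_positroid_biject}, $\gneckset{a}\le_a J$ for all $a$. I would cover $J$ by cyclic intervals. Write $J=\{c_1,\dots,c_k\}$ in cyclic order, starting just after a chosen gap. For each $c\in J$, consider the cyclic interval of $[n]$ ending at $c$ and starting just after the previous element of $J$. The aim is to choose breakpoints $j_1<\cdots<j_p$ (indices mod $n$) such that the elements of $J$ lying in the cyclic window $(j_i,j_{i+1}]$ are exactly the $a_i$ smallest elements of $\gneckset{j_i+1}$ in the $(j_i+1)$-order.

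The natural choice is greedy. Start at $j_1$ with $j_1+1=c_1$ chosen so that $c_1$ is the first element of $J$ after a gap, i.e.\ $c_1-1\notin J$ (if $J$ is an interval, take any start). Then compare $J$ with $\gneckset{j_1+1}$ in the $(j_1+1)$-order. The inequality $\gneckset{j_1+1}\le_{j_1+1}J$ says the sorted entries of the necklace are termwise weakly before those of $J$. Let $a_1$ be the length of the longest common initial segment. Then choose $j_2$ so that $j_2+1$ is the next element of $J$ not yet covered, and iterate.

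\textbf{Justifying each step.} The key claim is that the first $a_i$ entries agree and that the next uncovered element of $J$ really starts a new block, with the next necklace term agreeing on an initial segment again. For the initial agreement, I would use the necklace recurrence: passing from $\gneckset{a}$ to $\gneckset{a+1}$ removes $a$ and adds $\tilde{\pi}(a)$. When $a=c\in J$ and $c\in\gneckset{c}$, the element $c$ is the $c$-smallest in both sets, so the common prefix has length at least one. I then need $c\in\gneckset{c}$ whenever $c\in J$. This holds because $c$ is $c$-minimal and $\gneckset{c}\le_c J$ forces the first entry of $\gneckset{c}$ to be $\le_c c$. Without lollipops that first entry is $c$ itself, since any $c'\ne c$ preceding it would mean $\gneckset{c}$ is missing $c$ while $c\in J$, and that contradicts $\le_c$ only if the first entry is not $c$. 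This needs care. I would instead simply take $a_i\ge 1$ blocks of size one where necessary: $J=\bigcup_{c\in J}\gneckset{c}^{1}$ with $j=c-1$ and all $a_i=1$, provided $c\in\gneckset{c}$ for all $c\in J$. That last fact follows since $\gneckset{c}$ is the $c$-minimal element of $\mathcal{P}\ni J$, hence contains $c$ whenever some basis does.

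\textbf{Reduction to a $\Tmax$ and the main obstacle.} This trivial decomposition corresponds to $\tmax{\varphi_{c_1-1},\dots,\varphi_{c_k-1}}$ with all $a_i=1$. It is a $\TTMAX$ by \Cref{rem:Ts_equal_ts}, and by \Cref{lem:tmax_k_funds} and the compression discussion it may be rewritten in source-essential form. The main obstacle is that \Cref{thm:main} is stated only for \emph{source-essential} collections, so its boundary formula must be reconciled with the uncompressed one. After compression, repeated or redundant $\varphi$'s merge. I would argue that merging consecutive equal blocks turns $\gneckset{c_i}^1\cup\gneckset{c_{i+1}}^1\cup\cdots$ into $\gneckset{c_i}^{a}$, using the recurrence so the union is unchanged. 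I would also check that dropping noncontributing heights does not alter the matching, hence does not alter the boundary. Granting this, $\partial M_-=J$, and uniqueness of the minimum in $\mathcal{M}_J$ finishes the source case. The target case is dual, taking $j=c$ with $c\in\rgneckset{c}$.
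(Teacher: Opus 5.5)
Setting aside the greedy construction you abandon, your argument is exactly the paper's: $\gneckset{c}\le_c J$ forces $\gneckset{c}^1=\{c\}$ for each $c\in J$, so $J$ comes from $\tmax{\varphi_{c_1-1},\ldots,\varphi_{c_k-1}}$ with all $a_i=1$, and dually $\rgneckset{c}^1=\{c\}$ gives $\tmin{\varphi_{c_1},\ldots,\varphi_{c_k}}$. Here $c_1<\cdots<c_k$ must be listed in increasing order rather than your gap-based cyclic order, since $\Tmax$ is order-sensitive.

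The step you explicitly grant is that \Cref{thm:main}, stated only for source-essential collections, still returns $J$ for this uncompressed list. The paper also only asserts this, by appeal to inverting \Cref{lem:tmax_k_funds}, and it can be verified directly. One has $\Tmax(f)=\varphi_0(f)+n\max_{0\le t\le n}\bigl(|f\cap[t]|-|J\cap[t]|\bigr)$, and the inequalities $J\le_{a+1}\rgneckset{a}$ for $a=i-1,i$ then show that the boundary edge at $i$ is matched exactly when $i\in J$.
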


\begin{proof}
    Fix $J \in \mathcal{P}$ and write $J = \{j_1, j_2, \ldots, j_k\}$. Then for all $1 \le \ell \le k$ we have that $j_\ell \le_{j_\ell} i$ for all $i \in [n]$ so in particular $\gneckset{j_\ell}^1 = j_\ell$ for all $\ell$. Then the height function $\tmax{\varphi_{j_1-1},\varphi_{j_2 -1}, \ldots, \varphi_{j_k -1}}$ yields the minimal matching on source-labeled $G$ with boundary condition $J$, albeit this collection of fundamental heights not being source-essential. We can obtain the corresponding source-essential collection of fundamental heights by inverting the procedure in the proof of \Cref{lem:tmax_k_funds} and thereby the corresponding $\TTMAX$. Analogously, for all $1 \le \ell \le k$ we have that $j_\ell \ge_{j_\ell +1} i$ for all $i \in [n]$ so in particular $\rgneckset{j_\ell}^1 = j_\ell$ for all $\ell$. Then the height function $\tmin{\varphi_{j_1}, \varphi_{j_2},\ldots, \varphi_{j_k}}$ yields the maximal matching on target-labeled $G$ with boundary condition $J$, albeit this collection of fundamental heights not being target-essential. Similarly, we can obtain the corresponding target-essential collection of fundamental heights by inverting the procedure outlined in the proof of \Cref{lem:tmin_k_funds} and thereby the corresponding $\TTMIN$. 
\end{proof}

\begin{table}[ht]
    \centering
    {\setlength{\extrarowheight}{3pt}
    \begin{tabular}{|c|c|c|||c|c|c|}
    \hline
 $\tmax{\varphi_a,\varphi_b,\varphi_c}$   & \multicolumn{2}{c|||}{Boundary Condition} &  $\tmax{\varphi_a,\varphi_b,\varphi_c}$ & \multicolumn{2}{c|}{Boundary Condition} \\ 
\cline{2-3}  \cline{5-6}
$\{a,b,c\} \sim \{S(\vec{a})\}$  & On $\cev{F}(G)$ & On $\vec{F}(G)$ & $\{a,b,c\} \sim \{S(\vec{a})\}$ & On $\cev{F}(G)$ & On $\vec{F}(G)$ \\
       \hline
    $\{0,1,2\} \sim \{0\}$ &  123 & 567 &  $\{1,2,6\} \sim \{1,6(2)\}$ & 237 & 457 \\
     \hline
     $\{0,1,3\} \sim \{0,3(2)\}$ &  124 & 156 &  $\{1,3,4\} \sim \{1,3(1)\}$ & 245 & 125 \\
     \hline
     $\{0,1,4\} \sim \{0,4(2)\}$ &  125 & 156 &  $\{1,3,5\} \sim \{1,3(1),5(1)\}$ & 246 & 135 \\
     \hline
     $\{0,1,5\} \sim \{0,5(2)\}$ &  126 & 356 &  $\{1,3,6\} \sim \{1,3(1),6(1)\}$ & 247 & 145 \\
     \hline
     $\{0,1,6\} \sim \{6\}$ &  127 & 456 &  $\{1,4,5\} \sim \{1,4(1)\}$ & 256 & 135 \\
     \hline
     $\{0,2,3\}\sim \{0,2(1)\}$ &  134 & 157 &  $\{1,4,6\} \sim \{1,4(1),6(1)\}$ & 257 & 145 \\
     \hline
     $\{0,2,4\} \sim \{0,2(1),4(1)\}$ &  135 & 157 &  $\{1,5,6\} \sim \{5\}$ & 267 & 345 \\
     \hline
     $\{0,2,5\}\sim \{0,2(1),5(1)\}$ &  136 & 357 &  $\{2,3,4\} \sim \{2\}$ & 345 & 127 \\
     \hline
     $\{0,2,6\} \sim \{2,6(1)\}$ &  137 & 457 &  $\{2,3,5\} \sim \{2,5(2)\}$ & 346 & 137 \\
     \hline
     $\{0,3,4\} \sim \{0,3(1)\}$ &  145 & 125 &  $\{2,3,6\} \sim \{2,6(2)\}$ & 347 & 147 \\
     \hline
     $\{0,3,5\} \sim \{0,3(1),5(1)\}$ &  146 & 135 &  $\{2,4,5\} \sim \{2,4(1)\}$ & 356 & 135 \\
     \hline
     $\{0,3,6\} \sim \{3,6(1)\}$ &  147 & 145 &  $\{2,4,6\} \sim \{2,4(1),6(1)\}$ & 357 & 147 \\
     \hline
     $\{0,4,5\} \sim \{0,4(1)\}$ &  156 & 135 & $\{2,5,6\} \sim \{2,5(1)\}$ & 367 & 347 \\
     \hline
     $\{0,4,6\} \sim \{4,6(1)\}$ &  157 & 145 &  $\{3,4,5\} \sim \{3\}$ & 456 & 123 \\
     \hline
     $\{0,5,6\}\sim \{5\}$ &  267 & 345 &  $\{3,4,6\} \sim \{3,6(2)\}$ & 457 & 124 \\
     \hline
     $\{1,2,3\} \sim \{1\}$ &  235 & 157 &  $\{3,5,6\} \sim \{3,5(1)\}$ & 467 & 134 \\
     \hline
     $\{1,2,4\} \sim \{1\}$ &  235 & 157 &  $\{4,5,6\} \sim \{4\}$ & 567 & 134 \\
     \hline
     $\{1,2,5\} \sim \{1,5(2)\}$ &  236 & 357 &\multicolumn{3}{c|}{ } \\
     \hline
    \end{tabular}}
    \caption{The boundary conditions of matchings for all combinations $\{a,b,c\}\sim \{S(\vec{a})\}$ corresponding to $\tmax{\varphi_a,\varphi_b,\varphi_c} = \TMAX{S(\vec{a})}$ on both source- and target-labeled $G = G_B$, where $\{S(\vec{a})\}$ is the corresponding source-essential set with relative translation $\vec{a}$.}
    \label{tab:gb_missing_bdys}
\end{table}

\begin{table}[ht]
    \centering
    {\setlength{\extrarowheight}{3pt}
    \begin{tabular}{|c|c|c|||c|c|c|}
    \hline
 $\tmin{\varphi_a,\varphi_b,\varphi_c}$   & \multicolumn{2}{c|||}{Boundary Condition} & $\tmin{\varphi_a,\varphi_b,\varphi_c}$ & \multicolumn{2}{c|}{Boundary Condition} \\ 
 \cline{2-3} \cline{5-6}
 $\{a,b,c\} \sim \{T(\vec{a})\}$  & On $\vec{F}(G)$ & On $\cev{F}(G)$ &  $\{a,b,c\} \sim \{T(\vec{a})\}$  & On $\vec{F}(G)$ & On $\cev{F}(G)$ \\
       \hline
    $\{0,1,2\} \sim \{2\}$ &  127 & 345 &  $\{1,2,6\} \sim \{2,6(1)\}$ & 126 & 245 \\
     \hline
     $\{0,1,3\} \sim \{1,3(1)\}$ &  137 & 356 &  $\{1,3,4\} \sim \{4\}$ & 134 & 567 \\
     \hline
     $\{0,1,4\} \sim \{1,4(1)\}$ &  147 & 357 &  $\{1,3,5\} \sim \{1,3(1),5(1)\}$ & 135 & 256 \\
     \hline
     $\{0,1,5\} \sim \{1\}$ &  157 & 235 &  $\{1,3,6\} \sim \{1,3(1),6(1)\}$ & 136 & 256 \\
     \hline
     $\{0,1,6\} \sim \{1\}$ &  157 & 235 &  $\{1,4,5\} \sim \{1, 5(2)\}$ & 145 & 257 \\
     \hline
     $\{0,2,3\} \sim \{0,3(2)\}$ &  237 & 356 &  $\{1,4,6\} \sim \{1,4(1),6(1)\}$ & 146 & 257 \\
     \hline
     $\{0,2,4\} \sim \{0,2(1),4(1)\}$ &  247 & 357 &  $\{1,5,6\} \sim \{1,6(2)\}$ & 156 & 125 \\
     \hline
     $\{0,2,5\} \sim \{0,2(1),5(1)\}$ &  257 & 235 &  $\{2,3,4\} \sim \{4\}$ & 134 & 567 \\
     \hline
     $\{0,2,6\} \sim \{0,2(1)\}$ &  267 & 235 &  $\{2,3,5\} \sim \{3,5(1)\}$ & 235 & 256 \\
     \hline
     $\{0,3,4\} \sim \{0,4(2)\}$ &  347 & 367 &  $\{2,3,6\} \sim \{3,6(1)\}$ & 236 & 256 \\
     \hline
     $\{0,3,5\} \sim \{0,3(1),5(1)\}$ &  357 & 236 &  $\{2,4,5\} \sim \{2,5(2)\}$ & 245 & 257 \\
     \hline
     $\{0,3,6\} \sim \{0,3(1)\}$ &  367 & 236 &  $\{2,4,6\} \sim \{2,4(1),6(1)\}$ & 246 & 257 \\
     \hline
     $\{0,4,5\} \sim \{0,5(2)\}$ &  457 & 237 &  $\{2,5,6\} \sim \{2,6(2)\}$ & 256 & 125 \\
     \hline
     $\{0,4,6\} \sim \{0,4(1)\}$ &  467 & 237 &  $\{3,4,5\} \sim \{5\}$ & 345 & 267 \\
     \hline
     $\{0,5,6\} \sim \{0\}$ &  567 & 123 &  $\{3,4,6\} \sim \{4,6(1)\}$ & 346 & 256 \\
     \hline
     $\{1,2,3\} \sim \{3\}$ &  123 & 456 &  $\{3,5,6\} \sim \{3,6(2)\}$ & 356 & 126 \\
     \hline
     $\{1,2,4\} \sim \{2,4(1)\}$ &  124 & 457 &  $\{4,5,6\} \sim \{6\}$ & 456 & 127 \\
     \hline
     $\{1,2,5\} \sim \{2,5(1)\}$ &  125 & 245 &\multicolumn{3}{c|}{ } \\
     \hline
    \end{tabular}}
    \caption{The boundary conditions of matchings for all combinations $\{a,b,c\} \sim \{T(\vec{a})\}$ corresponding to $\tmin{\varphi_a,\varphi_b,\varphi_c} = \TMIN{T(\vec{a})}$ on both target- and source-labeled $G = G_B$, where $\{T(\vec{a})\}$ is the corresponding target-essential set with relative translation $\vec{a}$.}
    \label{tab:gb_good_bdys}
\end{table}

\begin{remark}
    \label{rem:mismatched_bdys}
In \Cref{lem:all_matchings}, we note that the pairings of source-labeled $G$ with $\TTMAX$ and target-labeled $G$ with $\TTMIN$ yield all possible boundary conditions $J \in \mathcal{P}$ of extremal matchings. From \Cref{thm:tmax_tmin_are_extremal}, one may consider the opposite pairings, i.e. $\TTMAX$ with target-labeled $G$ and $\TTMIN$ with source-labeled $G$, and still obtain extremal matchings. Furthermore, the boundary conditions for matchings coming from $\TTMAX$ on target-labeled $G$ (although not proven here) will arise as $j_i$-minimal entries from $\rgneckset{j_i}$ for ``target-essential" $\varphi_{j_i}$ appearing in $\TTMAX$, and dually, the boundary conditions for matchings coming from $\TTMIN$ on source-labeled $G$ will arise as $(j_i+1)$-maximal entries from $\gneckset{j_i+1}$ for ``source-essential" $\varphi_{j_i}$ appearing in $\TTMIN$.
\end{remark}

 We give in \Cref{tab:gb_missing_bdys} and \ref{tab:gb_good_bdys} the boundary conditions for $\TTMAX$ under source- and target-labelings of $G_B$ and the boundary conditions for $\TTMIN$ under the target- and source-labelings of $G_B$, respectively. One may observe that in either of the target-labeled column ($\vec{F}(G_B)$) of \Cref{tab:gb_missing_bdys} or the source-labeled column ($\cev{F}(G_B)$) of \Cref{tab:gb_good_bdys}, that over all 35 choices of $a,b,c$ inputs for $\Tmax$ and $\Tmin$, respectively, one yields 27 distinct boundary conditions (of the potential $|\mathcal{P}(B)| = 33$ boundary conditions). By constrast, either of the source-labeled column of \Cref{tab:gb_missing_bdys} or the target-labeled column of \Cref{tab:gb_good_bdys}, one yields all possible 33 boundary conditions of extremal matchings on $G_B$ with $\Tmax$ or $\Tmin$, respectively, as was asserted in \Cref{lem:all_matchings}.  

\begin{example}
\label{ex:running_boundary_conditions}
As in Figures \ref{fig:tmax14_example}, \ref{fig:tmin26_example} and\ref{fig:256_poset_by_heights}, see also \Cref{ex:positroid_example_matrix} and
\Cref{ex:256_posets_by_heights}, the height functions $\TMAX{\varphi_1,\varphi_4(1)} = \tmax{\varphi_1,\varphi_4,\varphi_5}$ in source-labeling and $\TMIN{\varphi_2,\varphi_6(2)} =  \tmin{\varphi_2,\varphi_5,\varphi_6}$ in target-labeling both yield almost perfect matchings with boundary condition $\{2,5,6\}$.

In particular, in the appropriate $a$-ordering (see \Cref{rem:GNwrittenorder}), the Grassmann necklace (resp. reverse Grassmann necklace) corresponding to $\mathcal{P}(B)$ is given by 
$\{123, 235, 345, 456, 567, 672, 712\}$ (resp. $\{175, 217, 321, 431, 543, 654, 765\}$).
Thus, applying \Cref{thm:main}, 
$\TMAX{\varphi_1,\varphi_4(1)}$ corresponds to ${\bf 2}35$ and ${\bf 56}7$ while applying \Cref{thm:main_restate}, we get that $\TMIN{\varphi_2,\varphi_6(2)}$ corresponds to ${\bf 2}17$ and ${\bf 65}4$.

Compare with the fifth (resp. thirteenth) line of the second column of \Cref{tab:gb_missing_bdys} (resp. \Cref{tab:gb_good_bdys}). 
\end{example}

\appendix 
\section{Applications of Extremal Matchings}
\label{appsec:Applications}

While the results of this paper regarding extremal matchings can be approached entirely through the combinatorics of lattices of dimer covers, our motivation for constructing these extremal matchings comes from the theory of cluster algebras.
With this motivation in mind, we  discuss the application of extremal matchings in computing \textit{twist maps} and $F$\textit{-polynomials} arising from the cluster algebra structures coming from positroid cells. We give a brief overview of the relevant cluster theory in \Cref{ssec:cluster_background} and give the main applicaitons and motiviation with regards to constructing extremal matchings in \Cref{ssec:twists}.

\subsection{Cluster algebra structures within the Grassmannian}
\label{ssec:cluster_background}

Cluster algebras were introduced in \cite{FZ1} as special subalgebras of the field of rational functions in $n$ variables. In particular, a cluster algebra, $\mathcal{A}$, is determined by choice of \textbf{initial seed}, i.e. a triple $(Q,\mathbb{X},\mathbb{Y})$ where $Q$ is a \textbf{quiver} on $n +m$ vertices, $\mathbb{X} = \{x_1,\ldots,x_n,x_{n+1}, \ldots, x_{n+m}\}$ are \textbf{cluster variables}, and $\mathbb{Y} = \{y_1,\ldots,y_n\}$ are \textbf{coefficients}. From the initial seed, one can perform \textbf{mutation} (in direction $i$), denoted $\mu_i$, to reach a new seed, which is an involution that exchanges a cluster variable $x_i$ with a new cluster variable $x_i'$, expresses each coefficient $y_j$ as some ratio of coefficients $y_j'$, and locally modifies arrows of $Q$ near the vertex $i$. Of note is that any cluster variable $x$ after any sequence of mutations can be expressed as a Laurent polynomial in variables and coefficients of the initial seed {\cite[Laurent Phenomenon]{FZ1}} and it was later shown that these Laurent expansions of cluster variables are totally positive \cite{LS}, \cite{GHKK}. 

The homogeneous coordinate ring of the affine cone over the Grassmannian, denoted $\coring$, has a cluster structure due to \cite{JS}, and similarly, the homogeneous coordinate ring of the affine cone over an open positroid variety within the Grassmannian, denoted $\poring$, has a cluster structure due to \cite{GL}. Consequently, given either of these coordinate rings, one can define an initial seed for a cluster algebra whose cluster consists entirely of Pl\"ucker coordinates and whose quiver is deducible from Pl\"ucker relations. More precisely, this uses the construction of a reduced plabic graph $G$, as was discussed in Section \ref{sec:plabic_graphs}, where the relevant Pl\"ucker coordinates are indexed by the face labels (using either the source- or target-labeling) and the quiver is defined as the (oriented) planar dual of the (bipartite) plabic graph. The corresponding quivers (with respect to the source- and target-labeling of $G_B$) are shown in \Cref{fig:dual_quivers}.

\begin{figure}[h!]
    \centering
    \includegraphics[width=0.3\linewidth]{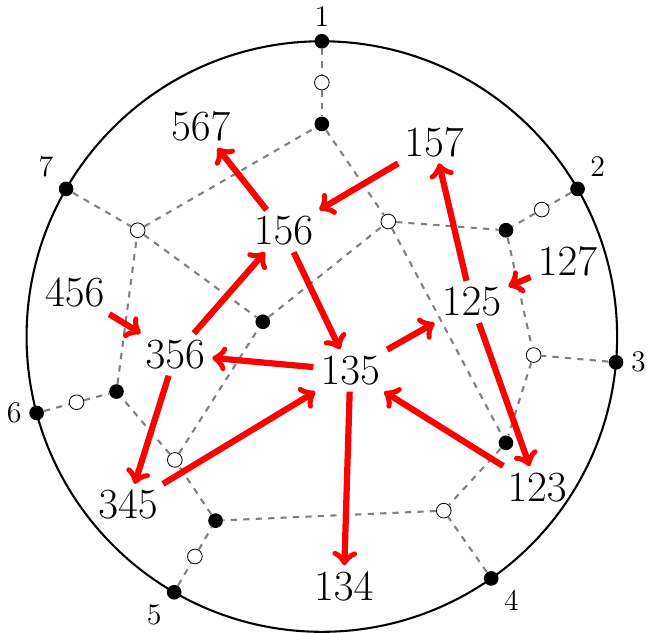} \hspace{2em} 
    \includegraphics[width=0.3\linewidth]{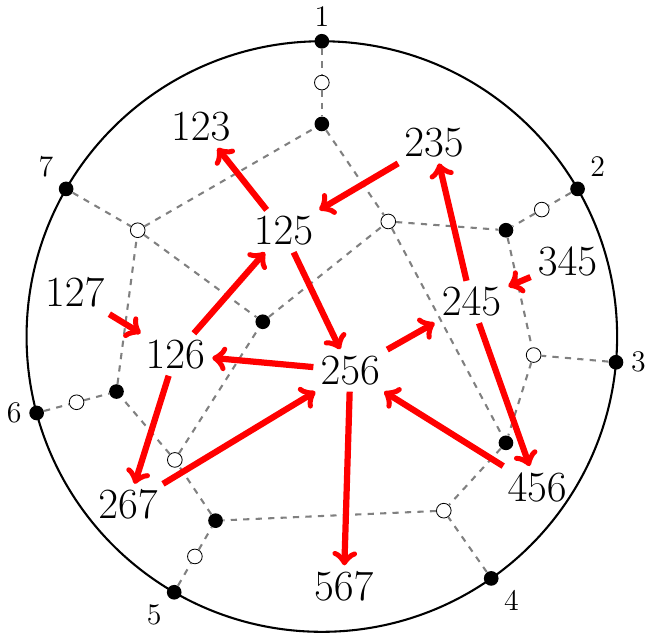}
    \caption{The corresponding quivers (in \textcolor{red}{red}) with respect to source- (left) and target-labeled (right) $G_B$.}
    \label{fig:dual_quivers}
\end{figure}

\subsection{Extremal matchings in twist maps and $F$-polynomials}
\label{ssec:twists}

The \textbf{twist map} (or twist automorphism), $\tau$, was originally introduced in \cite{BFZ} in the study of total positivity of unipotent cells in simple algebraic groups of simply-laced Lie type. It turns out $\tau$ is a cluster algebra automorphism. When applied to Pl\"ucker coordinates in the top-dimensional positroid cell, i.e. $\coring$, Marsh and Scott \cite{marsh2016twists} expressed $\tau$ in terms of a partition function on matchings where the weight of a matching $M$ comes from a face-weighting on $G$. Similarly, Muller and Speyer \cite{MSTwist} realized $\tau$ as partition function on matchings for all positroid cells with a slightly different face-weighting scheme. Many others have worked to define combinatorial interpretations of the twist automorphism in other (related) cluster algebra structures (see \cite{GLTwists}, \cite{BraidWeaves}). 

We direct our focus to the twist map of {\cite[Section 6]{MSTwist}}. Considering $\tau$ as some partition function over matchings on $G$, one obtains for the input cluster variable $\Delta_I$ the output cluster variable $\tau(\Delta_I)$ which is understood as the Laurent expansion for $\tau(\Delta_I)$ in terms of cluster variables from the initial seed corresponding to $G$. More precisely, to compute $\tau(\Delta_I)$, we obtain this Laurent expansion by summing over the set of all matchings with boundary condition $I$. An example of this computation, and simplification of $\tau(\Delta_I)$ into a single monomial cluster variable, is shown in \Cref{ex:twist_target}. Our result of constructing extremal matchings of any boundary condition yields an efficient means for constructing all matchings with a fixed boundary condition.

\begin{example}
    \label{ex:twist_target}
We consider the set of matchings $\mathcal{M}_{256}$ on target-labeled $G_B$ where $\Delta_{256}$ appears as a cluster variable. Following the enumeration of matchings in \Cref{ex:boundary_meas_map}, or also matchings from bottom-to-top in \Cref{fig:FpolyPoset}, the face weightings of the $M_i$ are:
\[\mathrm{wt}_f(M_1) = \frac{\Delta_{127}\Delta_{256}\Delta_{345}}{\Delta_{126}\Delta_{235}\Delta_{245}\Delta_{567}} \qquad \mathrm{wt}_f(M_2) = \frac{\Delta_{125}\Delta_{267}\Delta_{345}}{\Delta_{126}\Delta_{235}\Delta_{245}\Delta_{567}} \qquad \mathrm{wt}_f(M_3) = \frac{\Delta_{127}\Delta_{456}}{\Delta_{126}\Delta_{245}\Delta_{567}}\]
\[\mathrm{wt}_f(M_4) = \frac{\Delta_{125}\Delta_{267}\Delta_{456}}{\Delta_{126}\Delta_{245}\Delta_{256}\Delta_{567}} \qquad \mathrm{wt}_f(M_5) = \frac{1}{\Delta_{256}}.\]
Then the (right) twist of $\Delta_{256}$ is
\begin{align*}
 \tau(\Delta_{256}) &= \SumBlank{M_i \in \mathcal{D}_{256}}\mathrm{wt}_f(M_i) \\ 
 &=  \frac{\Delta_{127}\Delta_{256}\Delta_{345}}{\Delta_{126}\Delta_{235}\Delta_{245}\Delta_{567}} + \frac{\Delta_{125}\Delta_{267}\Delta_{345}}{\Delta_{126}\Delta_{235}\Delta_{245}\Delta_{567}} + \frac{\Delta_{127}\Delta_{456}}{\Delta_{126}\Delta_{245}\Delta_{567}} \\
  & \hspace{6em}+ \frac{\Delta_{125}\Delta_{267}\Delta_{456}}{\Delta_{126}\Delta_{245}\Delta_{256}\Delta_{567}} + \frac{1}{\Delta_{256}} \\
  &= \frac{1}{\Delta_{235}\Delta_{567}}\bigg(\frac{\Delta_{127}\Delta_{256}^2\Delta_{345}+\Delta_{125}\Delta_{256}\Delta_{267}\Delta_{345} + \Delta_{127}\Delta_{235}\Delta_{256}\Delta_{456}}{\Delta_{126}\Delta_{245}\Delta_{256}} \\
  & \hspace{6em}+ ~\hspace{6em} \frac{\Delta_{125}\Delta_{235}\Delta_{267}\Delta_{456}+\Delta_{126}\Delta_{235}\Delta_{245}\Delta_{567}}{\Delta_{126}\Delta_{245}\Delta_{256}} \bigg) \\
  &= \frac{\Delta_{357}}{\Delta_{235}\Delta_{567}},
 \end{align*}

 where $\Delta_{357}$ is another mutable cluster variable in this cluster algebra.
\end{example}

Another interesting application of extremal matchings to the cluster algebra theory is that of computing $\mathbf{F}$\textbf{-polynomials} for twisted cluster variables. $F$-polynomials were defined in \cite{FZ4} as the result of mutating clusters with principle coefficients and specializing all cluster variables to 1. In particular, $F$-polynomials track mutation patterns within cluster algebras and find important application in the \textbf{Separation of Additions Formula} of \cite{FZ4}. 

We give the process for computing $F$-polynomials in the following, with the result due to {\cite[Theorem 6.8]{MMSBV}}. In particular, to compute the $F$-polynomial corresponding to $\tau(\Delta_I)$, we consider the lattice of matchings $\mathcal{M}_I$. For each $M \in \mathcal{M}_I$, we assign a monomial $y_M = \ProdBlank{I}y^{m(I)}_I$, where the product is over face labels $I$ of $G$ which have been swiveled up from $M_-$ to reach $M$ in the lattice, and $m(I)$ is number of times the face labeled with $I$ has been swiveled up from $M_-$ to reach $M$. Then the corresponding $F$-polynomial is \[F_{\tau(\Delta_I)} = \SumBlank{M \in \mathcal{M}_I} y_M = \SumBlank{M \in \mathcal{M}_I}\ProdBlank{I}y^{m(I)}_I.\] 
In particular, $y_{M_-} = 1$, i.e. every $F$-polynomial has constant term 1, which is a well-known property in \cite{FZ4}. This additionally yields that the uses of \textit{minimal matching} and \textit{maximal matching} have algebraic significance as the minimal and maximal degree monomials, respectively, in the $F$-polynomial. We give an example computation of the $F$-polynomial $F_{\tau(\Delta_{256})}$ in \Cref{ex:256_F_poly}. 

\begin{figure}[h!]
    \centering
    \includegraphics[width=0.68\linewidth]{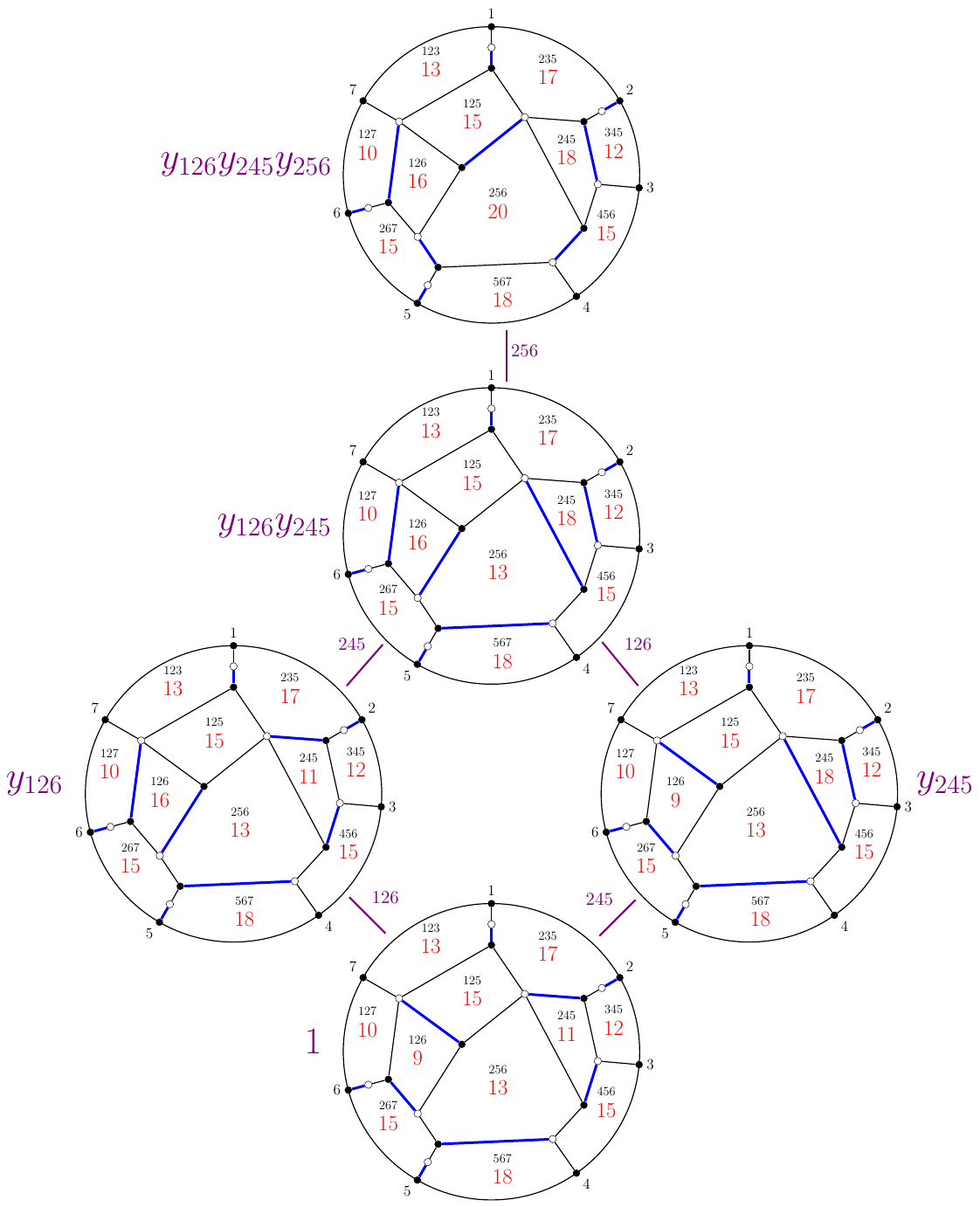}
    \caption{The lattice of $\mathcal{M}_{256}$ on target-labeled $G_B$ with the $y_M$ monomials next to each matching $M$ in this lattice.}
    \label{fig:FpolyPoset}
\end{figure}

\begin{example}
    \label{ex:256_F_poly}
We revisit the lattice of matchings $\mathcal{M}_{256}$ on target-labeled $G_B$ from \Cref{ex:256_posets_by_heights} shown in \Cref{fig:FpolyPoset} where the monomials $y_M$ next to each matching $M \in \mathcal{M}_I$. From \Cref{ex:twist_target}, we have $\tau(\Delta_{256}) = \frac{\Delta_{357}}{\Delta_{235}\Delta_{567}}$, where in the context of computing the $F$-polynomial, we can disregard the frozen variables appearing in the denominator, i.e. we compute $F_{\Delta_{357}}$. Then we observe \[F_{\tau(\Delta_{256})} = F_{\Delta_{357}} = 1 + y_{126} + y_{245} + y_{126}y_{245} + y_{126}y_{245}y_{256}.\]
In context, this shows that if we want to mutate our initial seed in such a way that $\Delta_{357}$ appears as a cluster variable in our mutated seed, then we could perform e.g. the mutation sequence $\mu_{256} \circ \mu_{245}\circ \mu_{126}$ read from right-to-left. 
\end{example}

\bibliographystyle{alpha}
\bibliography{bibliography}

\end{document}